\newcommand{\les}{\lesssim}
\newcommand{\lanx}{\langle x \rangle}
\newcommand{\R}{\mathbb R}
\newcommand{\p}{\partial}
\newcommand{\sgn}{\text{sgn}}
\numberwithin{equation}{section}
\newtheorem{theorem}{Theorem}[section]
\newtheorem{proposition}[theorem]{Proposition}
\newtheorem{remark}[theorem]{Remark}
\newtheorem{lemma}[theorem]{Lemma}
\newtheorem{corollary}[theorem]{Corollary}
\begin{document}

\title[The fKdV equation ]{On the decay of solutions for the negative fractional KdV equation }



\author[A. Cunha]{Alysson Cunha}
\address{Instituto de Matem\'atica e Estat\'istica(IME).
Universidade Federal de Goi\'as (UFG), Campus Samambaia, 131, 74001-970, Goi\^ania, Bra\-zil}
\email{alysson@ufg.br}

\author[O. Ria\~no]{Oscar Ria\~no}
\address{Departamento de Matem\'aticas, Universidad Nacional de Colombia, Ca\-rre\-ra 30 No. 45-03, 111321, Edificio Yu Takeuchi 404-209, Bogot\'a D.C., Colombia}
\email{ogrianoc@unal.edu.co}

\author[A. Pastor]{Ademir Pastor}
\address{Departamento  de  Matem\'atica,  Instituto  de  Matem\'atica,  Estat\'{\i}stica  e  Computa\c c\~ao  Cient\'{\i}fica(IMECC), Universidade Estadual de Campinas (UNICAMP), Rua S\'ergio Buarque de Holanda, 651,13083-859, Campinas, SP, Brazil}
\email{apastor@ime.unicamp.br}

\subjclass[2020]{35A01, 35B60, 35Q53, 35R11}

\keywords{fractional KdV equation, BO equation, Initial-value problem, Well-posedness, Weighted spaces}

\begin{abstract}
We explore the limits of fractional dispersive effects and their incidence in the propagation of polynomial weights. More precisely, we consider the fractional KdV equation when a differential operator of negative order determines the dispersion. We investigate what magnitude of weights and conditions on the initial data that allow solutions of the equation to persist in weighted spaces. As a consequence of our results, it follows that even in the presence of negative dispersion, it is still possible to propagate weights whose maximum magnitude is related to the dispersion of the equation. We also observe that our results in weighted spaces do not follow specific properties and limits that their counterparts with positive dispersion.
\end{abstract}
 
\maketitle

\section{Introduction}\label{introduction}

This paper is concerned with the initial value problem (IVP) associated with the negative dispersion fractional KdV equation (fKdV)
\begin{equation}\label{gbo}
\begin{cases}
\partial_t u-\partial_{x} D^{a+1} u+u\partial_xu=0, \;\;x,t\in\R, \quad a \in (-\frac52,-2), \\
u(x,0)=\phi(x),
\end{cases}
\end{equation}
where $D^{a+1}$ stands for the fractional derivative operator of order $a+1$ with respect to the variable $x$, which is defined via the Fourier transform as $D^{a+1} f=(|\xi|^{a+1}\widehat{f})^\vee$.

For different dispersions $a\in \mathbb{R}$, equation fKdV has been used as a test model to analyze the competition between dispersion and nonlinearity. When $a=1$, one obtains the widely studied Korteweg-de Vries equation, which appears in various contexts such as shallow-water waves with weakly non-linear restoring forces, see \cite{Miura1976}. In this work, we focus on instances where the operator $D^{a+1}$ is nonlocal, where we highlight the following cases:

$\bullet$ \emph{Dispersion $a=0$ in fKdV}. The resulting model is the Benjamin-Ono equation, which has been used to study waves in stratified deep water. Concerning well-posedness in Sobolev spaces $H^s(\mathbb{R})$, see \cite{AbBonaFellSaut1989, BurqPlanchon2008,IfrimTataru2019,IonescuKenig2007, Iorio1986, KillipLaurensVican2023,  KochTzvetkov2003, MolinetPilod2012,   Ponce1991, Tao2004}, and for persistence in weighted spaces, we refer the reader to \cite{Cunha2022,fp,Iorio1986}. See also \cite{dBO} regarding results on weighted spaces for a dissipative perturbation of \eqref{gbo}. 

$\bullet$ \emph{Dispersion $0<a<1$ in fKdV}. One obtains the dispersion generalized Benjamin-Ono equation, which has been used to model physical phenomena such as vorticity waves in the coastal zone \cite{NaumkinShishmarev1994, ShriraVoronovich1996}. See \cite{HerIonescuKenigKoch2010}, for well-posedness in $H^s(\mathbb{R})$  and \cite{FLP1} for well-posedness in weighted spaces. Furthermore, well-posedness in weighted spaces for a bidimensional version of \eqref{gbo} can be found in \cite{apl}. 

$\bullet$ \emph{Dispersion $-1<a<0$ in fKdV}. We obtain the low-order dispersive fractional KdV equation whose well-posedness in $H^s(\mathbb{R})$ was investigated in \cite{LinaresPilodSaut2014, MolinetPilodVento2018}, and in weighed spaces in \cite{Rianogbo}. 

$\bullet$ \emph{Dispersion $-2<a<-1$ in fKdV}. The case $a=-\frac{3}{2}$ has been proposed in the study of water waves in two dimensions with infinite depth, see \cite{Hur2012}. Concerning well-posedness in weighted spaces and further remarks on that equation, see \cite{KenigPilodPonceVega2020, Rianogbo}.  

$\bullet$ \emph{Dispersion $a=-2$ in fKdV}. In this case, we obtain the Burgers-Hilbert equation, introduced to study nonlinear constant-frequency waves \cite{BielloHunter2010, HunterIfrimTataruWong2015}. For well-posedness in weighted spaces, see \cite{Rianogbo}   

$\bullet$ The equation has also been studied in several variable settings. See \cite{HBO,HickmanLinaresRiano2019,Riano2020,RianoRoudenko2022,RianoRoudenkoYang2022,Schippa2020}, and references therein. 

We observe that dispersions $a\geq-2$ in \eqref{gbo} have been extensively studied in the literature. This paper seeks to push the dispersive effects to the limit and consider the extremes $a<-2$. Our goal is to characterize how the dispersion influences certain conditions on the initial data to establish for which values of $\theta>0$, the polynomial weight $|x|^{\theta}$ is propagated by solutions of \eqref{gbo}, i.e., we aim to determinate $\theta>0$ such that there exists a local solution  $u$ of \eqref{gbo} with $|x|^{\theta} u\in L^{\infty}([0,T];L^2(\mathbb{R}))$. This kind of question is well understood in the case $a>-2$, where it has been established, in conjunction with some unique continuation principles (UCP), that $\theta<\frac{7}{2}+a$ is the weight limit propagated by solutions of \eqref{gbo}, see \cite{FLP1,fp,Rianogbo} for details. However,  we will see that this limit on $\theta$ changes for negative dispersions $a<-2$, and extra conditions on the initial data are necessary to propagate weights.  Consequently, our results give a complete framework of the spatial decay of solutions of \eqref{gbo} in polynomial spaces when $a>-\frac{5}{2}$.

On the other hand, we recall that for the Cauchy problem \eqref{gbo} some local UCP and asymptotic at infinity UCP have been studied before. For more information on these questions, see \cite{LinaresPonce2022}. Actually, local UCP for various cases of dispersion $a\in \mathbb{R}$ in \eqref{gbo} have been obtained in \cite{KenigPilodPonceVega2020,KenigPonceVega2020}.  Concerning asymptotic at infinity UCP when $a\geq -2$, see references \cite{FLP1,FonsecaLinaresPonce2012,fp,Rianogbo}. However, when $a<-2$, although there exist results on local UCP, there seem to be no previous works dealing with asymptotic at infinity UCP. Motivated by such a question, we decided to study \eqref{gbo} with negative dispersions $-\frac{5}{2}<a<-2$, and we obtain some new UCP in Propositions \ref{propostimes1} and \ref{propostimes3} below.

Concerning conserved quantities, solutions of \eqref{gbo} formally preserve the mass 
\begin{equation*}
M[u(t)]=\int_{\mathbb{R}} |u(x,t)|^2\, dx,    
\end{equation*}
and the energy
\begin{equation}\label{energy}
E[u(t)]=\frac{1}{2}\int_{\mathbb{R}} |D^{\frac{a+1}{2}}u(x,t)|^2\, dx-\int_{\mathbb{R}} u^3(x,t)\, dx.    
\end{equation}
Motivated by the energy invariant \eqref{energy}, for $\theta>0$, $a<-2$, $s\in \mathbb{R}$, we consider the space
\begin{equation}\label{Hspacedefi}
H_{a,\theta}^s(\mathbb{R}):=H^s(\R)\cap \dot{H}^{(a+1)\theta}(\mathbb{R}),
\end{equation}  
where we recall that the homogeneous Sobolev space $\dot{H}^{(a+1)\theta}(\mathbb{R})$ consists of all tempered distributions $f$ for which $\widehat{f}\in L^1_{loc}(\mathbb{R})$ and $\|D^{(a+1)\theta}f\|=\||\xi|^{(a+1)\theta}\widehat{f}\|<\infty$, where $\|\cdot\|=\|\cdot\|_{L^2}$ denotes the usual norm in $L^2(\mathbb{R})$. Thus, we assign the norm $\|f\|_{H_{a,\theta}^s}^2:=\|f\|_{H^s}^2+\|D^{(a+1)\theta}f\|^2$ to the space $H_{a,\theta}^s(\mathbb{R})$. In particular, when $\theta=\frac{1}{2}$, the energy \eqref{energy} is justified in the space $H_{a,\frac{1}{2}}^s(\mathbb{R})$, $s>\frac{1}{2}$. 

Note that when $-\frac{5}{2}<\theta<a$, and $\frac{2+a}{1+a}\leq \theta<-\frac{3}{2(1+a)}$, a parabolic regularization-type argument can be used to show the existence of solutions to the Cauchy problem \eqref{gbo} in the space $H^{s}_{a,\theta}(\mathbb{R})$ for regularity $s>\frac{3}{2}$ (see Lemma \ref{existence} below). Such methods are standard and have been applied in quite a few contexts, see for example \cite{AbBonaFellSaut1989,APlow,Iorio1986,IorioNunes1998}. However, in the case of negative dispersion as that given in \eqref{gbo}, it has not been established whether such a method is valid for obtaining solutions in spaces $H^{s}_{a,\theta}(\mathbb{R})$. For this reason, we have decided to deduce the existence of solutions of \eqref{gbo} in Section \ref{standarlwpAppendix}. It must be highlighted, we do not know if there is a local well-posedness theory for \eqref{gbo} in spaces $H_{a,\theta}^s(\mathbb{R})$ for $0<\theta< \frac{2+a}{1+a}$ or $\theta\geq -\frac{3}{2(1+a)}$, and some $s\in \mathbb{R}$. Notice that in the case of negative dispersion $a< -2$, it is not clear if dispersive effects would help deduce the existence of solutions in spaces of lower regularity $s\leq \frac{3}{2}$.  

Next, we introduce our weighted Sobolev spaces. Given $a<-2$, $\theta>0$, $s,r\in \mathbb{R}$, we consider the space
\begin{equation}\label{defiZspace}
  Z_{s,r}^{a,\theta}=H^s_{a,\theta}(\R)\cap L^2(|x|^{2r}\, dx)
\end{equation}
endowed with the norm $\|f\|^2_{Z_{s,r}^{a,\theta}}=\|f\|^2_{H^s_{a,\theta}}+\||x|^{r}f\|^2$.  We also consider the space
\begin{equation}\label{defiZspace2}
  \mathcal{Z}_{s,r}^{a,\theta}=\{f\in  Z_{s,r}^{a,\theta}: |x|^{\theta-1}D^{1+a}f, D^{(1+a)(\theta-1)}(xf)\in L^2(\mathbb{R})\},
\end{equation}
equipped with the norm $\|f\|^2_{\mathcal{Z}_{s,r}^{a,\theta}}=\|f\|^2_{Z_{s,r}^{a,\theta}}+\||x|^{\theta-1}D^{1+a}f\|^2+ \|D^{(1+a)(\theta-1)}(xf)\|^2$. 

Our first main result focuses on studying well-posedness and persistence in the spaces $Z_{s,\theta}^{a,\theta}$, $\mathcal{Z}_{s,r}^{a,\theta}$, $\theta>0$, $s>0$, and $a<-2$. It must be clear that in this work, we follow Kato's notion of well-posedness, which consists in existence, uniqueness, persistence property (if $\phi \in X$ for some functional space, then the corresponding solution describes a continuous curve in $X$, i.e., $u\in  C([0,T];X)$), and continuous dependence of the map data-solution.

\begin{theorem}\label{lwpw} 
Assume $-\frac{5}{2}<a<-2$. The following statements hold:
\begin{itemize}
    \item[(i)] If $0<\theta\leq 1$ and $s>\frac{3}{2}$, 
    then the Cauchy problem \eqref{gbo} is locally well-posed in $Z_{s,\theta}^{a,\widetilde{\theta}}$, where $\widetilde{\theta}=\frac{2+a}{1+a}$, if $0<\theta<\frac{2+a}{1+a}$,  and $\widetilde{\theta}=\theta$, if $\theta\geq \frac{2+a}{1+a}$.
      \item[(ii)] If $1<\theta<-\frac{3}{2(1+a)}$ and $s\geq \max\{\frac{3}{2}^{+},\frac{5\theta((1+a)(\theta-1)+1) }{3(2\theta-1)-\theta}\}$, then the Cauchy problem \eqref{gbo} is locally well-posed in $\mathcal{Z}_{s,\theta}^{a,\theta}$.

      \item[(iii)] If $-\frac{3}{2(1+a)}\leq \theta<\frac{1+2a}{2(1+a)}$, $s\geq \max\{-\frac{9+6a}{2(5+2a)},2^{+}\}$ and $\phi\in \mathcal{Z}_{s,\theta}^{a,\theta}$, then there exist a time $T>0$ and a unique solution $u$ of \eqref{gbo} in the class
      \begin{equation}\label{finalclass2}
         C([0,T];H^s_{a,\widetilde{\theta}}(\mathbb{R}))\cap L^{\infty}([0,T];L^2(|x|^{2\theta}\, dx )), 
      \end{equation}
      with $\widetilde{\theta}=\Big(-\frac{3}{2(1+a)}\Big)^{-}$. Above, we follow the standard notation $a^{\pm }=a\pm\epsilon$, $0<\epsilon\ll 1$.
\end{itemize}
\end{theorem}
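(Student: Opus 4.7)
The plan is to build on the Sobolev-level local solution furnished by Lemma \ref{existence} in $H^s_{a,\widetilde{\theta}}(\R)$ and to propagate the polynomial weight $|x|^\theta$ by a weighted a priori energy estimate with a truncated weight, followed by a monotone convergence argument. Given $\phi$ in the appropriate weighted space, one first obtains $u\in C([0,T];H^s_{a,\widetilde{\theta}}(\R))$ with $T$ depending only on $\|\phi\|_{H^s_{a,\widetilde{\theta}}}$. For a smooth nondecreasing truncation $w_N(x)$ satisfying $w_N(x)=\langle x\rangle$ on $|x|\le N$ and $w_N(x)\equiv 2N$ on $|x|\ge 2N$, multiply \eqref{gbo} by $(w_N^\theta)^2 u$, integrate, and integrate by parts in the nonlinear term (harmless, absorbed by $\|u\|_{L^\infty}\lesssim\|u\|_{H^s}$ since $s>3/2$); the substantial quantity to control is the commutator
\begin{equation*}
\mathcal{C}_N[u]\;=\;\int_{\R}\bigl[\partial_x D^{1+a},\,w_N^\theta\bigr]u\cdot w_N^\theta u\,dx.
\end{equation*}

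For part (i), where $0<\theta\le 1$, the weight $w_N^\theta$ has uniformly bounded first derivative, and Stein-type pointwise estimates for $D^{1+a}(w_N^\theta)$ together with the fractional Leibniz rule allow one to bound $\mathcal{C}_N[u]$ purely in terms of $\|u\|_{H^s_{a,\widetilde{\theta}}}$ and $\|w_N^\theta u\|_{L^2}$. The choice $\widetilde{\theta}=(2+a)/(1+a)$ for the subrange $0<\theta<(2+a)/(1+a)$ reflects the minimal low-frequency information on $\widehat{u}$ that makes the resulting homogeneous norm $\|D^{(1+a)\widetilde{\theta}}u\|$ integrable; for larger $\theta$ the space is self-consistent and $\widetilde{\theta}=\theta$ works directly.

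For parts (ii)--(iii), where $\theta>1$, the derivative $\partial_x w_N^\theta \sim \theta|x|^{\theta-1}$ is no longer bounded, so the direct commutator estimate fails. Here one writes
\begin{equation*}
[\partial_x D^{1+a},w_N^\theta]u \;=\; D^{1+a}\bigl((\partial_x w_N^\theta)u\bigr)+\bigl[D^{1+a},w_N^\theta\bigr]\partial_x u,
\end{equation*}
and applies the Kenig--Ponce--Vega fractional Leibniz rule to the second term, making the two quantities
\begin{equation*}
|x|^{\theta-1}D^{1+a}u \qquad \text{and} \qquad D^{(1+a)(\theta-1)}(xu)
\end{equation*}
that define $\mathcal{Z}_{s,r}^{a,\theta}$ in \eqref{defiZspace2} appear naturally as control parameters. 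A Gronwall inequality then closes the full $\mathcal{Z}_{s,\theta}^{a,\theta}$-norm as long as $\theta<-3/(2(1+a))$, giving (ii) under the prescribed lower bound on $s$ (needed to absorb $\|\partial_x u\|_{L^\infty}$ via Sobolev embedding in the Leibniz contribution). For (iii), the threshold $-3/(2(1+a))$ is violated, so the full $\mathcal{Z}$-norm cannot be recovered; nevertheless $\||x|^\theta u\|_{L^2}$ can still be estimated provided one relaxes the homogeneous regularity of the ambient Sobolev space to $\widetilde{\theta}=(-3/(2(1+a)))^-$, which is precisely the class \eqref{finalclass2}, and continuity in the weighted component is lost.

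The a priori bounds being uniform in $N$, monotone convergence as $N\to\infty$ yields the weighted estimate; uniqueness is inherited from the Sobolev theory, and continuous dependence together with time continuity is obtained through a Bona--Smith argument applied to mollified initial data that respect the weighted norm. The main obstacle is the commutator analysis in parts (ii)--(iii): although the negative dispersion makes $D^{1+a}$ a smoothing operator of order $|1+a|>1$, pairing it with a growing weight destroys this advantage, and the thresholds $-3/(2(1+a))$ and $(1+2a)/(2(1+a))$ emerge as sharp integrability conditions for $D^{1+a}$ acting on power weights. Driving the regularity index $s$ down to the announced value $5\theta((1+a)(\theta-1)+1)/(3(2\theta-1)-\theta)$ in (ii) requires a careful interpolation of Sobolev exponents through the Leibniz bound, and this is where the negative-dispersion character of \eqref{gbo} constrains how much decay can be propagated.
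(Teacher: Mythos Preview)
Your approach is genuinely different from the paper's. The paper does not use truncated weights and commutator estimates at all; instead, for parts (i) and (ii) it works with the parabolically regularized problem \eqref{mugbo}, runs a contraction argument directly in the weighted space $Z_{s,\theta}^{a,\widetilde{\theta}}$ (resp.\ $\mathcal{Z}_{s,\theta}^{a,\theta}$) on the Duhamel formula, using the explicit weighted estimate for the semigroup $U_\mu(t)$ (Corollary \ref{Dmu}), and then passes to the limit $\mu\to 0$ via a uniform Gronwall bound. The quantities $|x|^{\theta-1}D^{1+a}u$ and $D^{(1+a)(\theta-1)}(xu)$ in \eqref{defiZspace2} arise in the paper not from a commutator decomposition of $[\partial_x D^{1+a},w_N^\theta]$, but from the second-order expansion of $|x|^\theta U_\mu(t)f$ in Lemma \ref{Dthetau}. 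Your energy scheme could plausibly be made to work for (i)--(ii), but the commutator $[D^{1+a},w_N^\theta]$ involves a genuinely negative-order operator ($1+a<-1$), for which Kato--Ponce/Kenig--Ponce--Vega type Leibniz rules are not standard and would need independent justification; the paper explicitly remarks that the negative dispersion forces arguments different from those in \cite{FLP1,fp,Rianogbo}.

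There is, however, a real gap in your treatment of (iii). Saying that one ``relaxes the homogeneous regularity to $\widetilde{\theta}=(-3/(2(1+a)))^{-}$'' does not explain how a weighted energy identity for $\|w_N^\theta u\|^2$ with $\theta\ge -3/(2(1+a))$ can close: the obstruction is precisely that the commutator produces a factor $D^{(1+a)\theta}u$ (or an equivalent low-frequency object) that is not controlled in $H^s_{a,\widetilde{\theta}}$ once $\theta$ exceeds the threshold. The paper bypasses this by an entirely different mechanism: it proves (Proposition \ref{extradecaycond}) that the Duhamel integral $\int_0^t U(t-\tau)\partial_x u^2(\tau)\,d\tau$ enjoys \emph{strictly more} polynomial decay than $u$ itself, thanks to the structure of $\partial_x u^2$ at low frequency (in particular the identity $\widehat{u^2}(0,\tau)=\|\phi\|^2$, which permits the explicit computation \eqref{decompositionApart2}). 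One then iterates: starting from decay $\theta_0=(-3/(2(1+a)))^{-}$ obtained via part (ii), the Duhamel piece gains to $\theta_1>\theta_0$, hence so does $u$ (since $U(t)\phi$ already lies in $L^2(|x|^{2\theta}\,dx)$ by hypothesis on $\phi$), and finitely many steps reach the prescribed $\theta<\tfrac{1+2a}{2(1+a)}$. Your energy method, as written, contains no analogue of this bootstrap and therefore does not reach weights beyond $-3/(2(1+a))$.
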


\begin{remark}
	Before discussing some points in the proof of Theorem \eqref{lwpw}, let us give some remarks.
	
(a) The local well-posedness in (i) and (ii) depend on the local existence of solutions in $H^s_{a,b}(\mathbb{R})$ given in Lemma \ref{existence}, where it is assumed that $b$ satisfies $\frac{2+a}{1+a}\leq b <-\frac{3}{2(1+a)}$.

(b) Contrary to (i) and (ii), under the assumptions in (iii), we cannot deduce the local well-posedness in $\mathcal{Z}_{s,\theta}^{a,\theta}$ because we do not know the local well-posedness in $H^s_{a,\theta}(\mathbb{R})$. The best we can say is the persistence result \eqref{finalclass2}. However, our proof extend to local well-posedness in $\mathcal{Z}_{s,\theta}^{a,\theta}$  provided that there exists a local theory in $H^s_{a,\theta}(\mathbb{R})$.
\end{remark}

Concerning the proof of Theorem \ref{lwpw}, given the difficulties in controlling the negative derivatives in weighted spaces, we first consider a regularized version of equation \eqref{gbo} (see \eqref{mugbo} below), for which a contraction argument shows persistence in weighted spaces. Afterward, the results for the Cauchy problem \eqref{gbo} follow by an approximation of such regularized equation. When $-\frac{3}{2(1+a)}\leq \theta<\frac{1+2a}{2(1+a)}$, we use the result of Theorem \ref{lwpw} for lower weights $\theta<-\frac{3}{2(1+a)}$, and an iterative argument to get the desired result. We remark that such an approach is possible by Proposition \ref{extradecaycond} that shows extra decay properties for the nonlinear quadratic term $u\partial_x u$. We also note that because of the negative dispersion in the equation, we are forced to use different arguments than those given for dispersion $a\geq -2$ in \cite{FLP1,fp,Rianogbo}. Nevertheless, we believe that our arguments can also be extended to such dispersion.

We emphasize that in the case of dispersion $a\geq -2$, $a\neq -1$, the number $\frac{7}{2}+a$ determines the maximum polynomial decay propagated by solutions of \eqref{gbo}. Contrary to that, when $-\frac{5}{2}<a<-2$, certain estimates that are only valid for such constraints (see \eqref{decompositionApart2}, and Proposition \ref{nonintegrabprop}), allow us to extend such a decay limit. Thus, in Theorem \ref{lwpw}, we obtain persistence results for weights $\theta<\frac{1+2a}{2(1+a)}$, where one has $\frac{1+2a}{2(1+a)}>\frac{7}{2}+a$. The above is one of the main differences of the fKdV equation when compared with $a<-2$.

Note that Theorem \ref{lwpw} establishes local well-posedness in spaces $Z_{s,r}^{a,\theta}$, $\mathcal{Z}_{s,r}^{a,\theta}$, where $r=\theta$ if $\theta\geq\frac{2+a}{1+a}$. Concerning cases where $r<\theta$, given $\frac{2+a}{1+a}\leq \widetilde{\theta}<-\frac{3}{2(1+a)}$, and $0<\theta\leq \widetilde{\theta}$, as a direct consequence of the proof of Theorem \ref{lwpw} (i) and (ii),  and the embedding $(H^s(\mathbb{R})\cap \dot{H}^{(1+a)\widetilde{\theta}}(\mathbb{R}))\hookrightarrow (H^s(\mathbb{R})\cap \dot{H}^{(1+a)\theta}(\mathbb{R}))$, $s>\frac{3}{2}$, we deduce:
\begin{corollary}\label{clwpw} 
 Assume $-\frac{5}{2}<a<-2$. If $\frac{2+a}{1+a}\leq \widetilde{\theta}\leq 1$, $0<\theta\leq \widetilde{\theta}$, and $s>\frac{3}{2}$, 
    then the Cauchy problem \eqref{gbo} is locally well-posed in $Z_{s,\theta}^{a,\widetilde{\theta}}$. If $1< \widetilde{\theta}< -\frac{3}{2(1+a)}$, $1<\theta\leq \widetilde{\theta}$, and $s\geq \max\{\frac{3}{2}^{+},\frac{5\theta((1+a)(\theta-1)+1) }{3(2\theta-1)-\theta}\}$, 
    then the Cauchy problem \eqref{gbo} is locally well-posed in $\mathcal{Z}_{s,\theta}^{a,\widetilde{\theta}}$.
\end{corollary}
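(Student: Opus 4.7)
The argument is a direct combination of Lemma \ref{existence}, Theorem \ref{lwpw}(i)--(ii), and the stated embedding, with no substantial obstacle.

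For the first statement, I would first check that Lemma \ref{existence} applies with $b = \widetilde{\theta}$. Since $a \in (-5/2,-2)$ forces $-\frac{3}{2(1+a)} > 1 \geq \widetilde{\theta}$, one has $\widetilde{\theta} \in [\frac{2+a}{1+a}, -\frac{3}{2(1+a)})$, so Lemma \ref{existence} yields a unique local solution $u \in C([0,T]; H^s_{a,\widetilde{\theta}}(\mathbb{R}))$ depending continuously on $\phi$ in that norm.

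Next, I would exploit the stated embedding $H^s(\mathbb{R}) \cap \dot{H}^{(1+a)\widetilde{\theta}}(\mathbb{R}) \hookrightarrow H^s(\mathbb{R}) \cap \dot{H}^{(1+a)\theta}(\mathbb{R})$ (valid since $\theta \leq \widetilde{\theta}$ and $(1+a) < 0$): any $\phi \in Z_{s,\theta}^{a,\widetilde{\theta}}$ lies also in $Z_{s,\theta}^{a,\widetilde{\theta}_0}$, where $\widetilde{\theta}_0$ is the parameter from Theorem \ref{lwpw}(i), namely $\widetilde{\theta}_0 = \frac{2+a}{1+a}$ if $\theta < \frac{2+a}{1+a}$ and $\widetilde{\theta}_0 = \theta$ otherwise. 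Theorem \ref{lwpw}(i) then provides local well-posedness in $Z_{s,\theta}^{a,\widetilde{\theta}_0}$, and by uniqueness in $H^s(\mathbb{R})$ for $s > 3/2$ this solution must coincide with the one produced above via Lemma \ref{existence}. Combining the two persistence properties yields $u \in C([0,T]; Z_{s,\theta}^{a,\widetilde{\theta}})$ with continuous dependence in that norm.

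The second statement is handled identically: Lemma \ref{existence} applies because $\widetilde{\theta} < -\frac{3}{2(1+a)}$, and Theorem \ref{lwpw}(ii) is then invoked with data viewed in $\mathcal{Z}_{s,\theta}^{a,\theta}$ via the same embedding. The extra requirements $|x|^{\theta-1} D^{1+a} f,\, D^{(1+a)(\theta-1)}(xf) \in L^2(\mathbb{R})$ in the definition of $\mathcal{Z}_{s,\theta}^{a,\widetilde{\theta}}$ involve only $\theta$ (not $\widetilde{\theta}$) and transfer verbatim from Theorem \ref{lwpw}(ii). The only subtlety in the whole argument is matching the different roles of $\widetilde{\theta}$ in the statement of Corollary \ref{clwpw} versus $\widetilde{\theta}_0$ in Theorem \ref{lwpw}, and this is resolved at once by the embedding combined with $H^s$-uniqueness.
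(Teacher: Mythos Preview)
Your proposal is correct and follows essentially the same route as the paper: the paper states the corollary as a direct consequence of the proof of Theorem \ref{lwpw}(i)--(ii) combined with the embedding $H^s\cap\dot H^{(1+a)\widetilde\theta}\hookrightarrow H^s\cap\dot H^{(1+a)\theta}$, and your argument simply makes that explicit by invoking Lemma \ref{existence} at level $\widetilde\theta$, Theorem \ref{lwpw} at level $\theta$, and matching the two via $H^s$-uniqueness. The only cosmetic difference is that the paper phrases it as re-running the proof of Theorem \ref{lwpw} with the parameter $\widetilde\theta$, whereas you combine the already established statements as black boxes; the underlying estimates and the role of the embedding are identical.
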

In particular, setting $\widetilde{\theta}=\frac{1}{2}$, the previous corollary establish local well-posedness results for the space $H^s_{a,\frac{1}{2}}(\mathbb{R})\cap L^2(|x|^{2\theta}\, dx)$, $0<\theta\leq \frac{1}{2}$, which is motivated by the energy \eqref{energy}.

The conditions $D^{(1+a)\theta}\phi, |x|^{\theta-1}D^{1+a}\phi,D^{(1+a)(\theta-1)}(x\phi) \in L^2(\mathbb{R})$ in the assumptions of Theorem \ref{lwpw} may be completely technical. However, in the following theorem, we show that such conditions are necessary to propagate fractional weights in $L^2$-spaces. It seems these conditions appear to originate from the dispersive effects in fKdV. More precisely, we will show that if a sufficiently regular solution $u\in C([0,T];H^s(\mathbb{R}))$ of \eqref{gbo} with $u(0)=\phi$ persists in some space $L^{2}(|x|^{2\theta}\, dx)$ with $0\leq \theta\leq 1$, i.e., $u\in L^{\infty}([0,T];L^2(|x|^{2\theta}\,dx))$, then  $D^{(1+a)\theta}\phi, |x|^{\theta}\phi \in L^2(\mathbb{R})$ must hold. When $1<\theta<\frac{1+2a}{2(1+a)}$, we show that $u\in L^{\infty}([0,T];L^2(|x|^{2\theta}\,dx))$ implies that $D^{(1+a)(\theta-1)}(x\phi)$, $|x|^{\theta}\phi  \in L^2(\mathbb{R})$, if and only if $D^{(1+a)\theta}\phi$, $|x|^{(\theta-1)} D^{1+a}\phi\in L^2(\mathbb{R})$. Such a result connects the extra conditions in the definition of the space $\mathcal{Z}_{s,r}^{a,\theta}$ and the fact that $\phi \in Z_{s,r}^{a,\theta}$.

\begin{theorem}\label{Uniquecont1}
Suppose $-\frac{5}{2}<a<-2$ and $0<\theta<\frac{1+2a}{2(1+a)}$. Assume also
\begin{itemize}
	\item[(a)] $s>\frac{3}{2}$ if $0<\theta\leq 1$;
	\item[(b)]  $s\geq \max\{\frac{3}{2}^{+},\frac{5\theta((1+a)(\theta-1)+1) }{3(2\theta-1)-\theta}\}$ if $1<\theta<-\frac{3}{2(1+a)}$; and
	\item[(c)] $s\geq \max\{-\frac{9+6a}{2(5+2a)},2^{+}\}$ if $-\frac{3}{2(1+a)}\leq \theta<\frac{1+2a}{2(1+a)}$.
\end{itemize} 
 Let $u\in C([0,T];H^s(\mathbb{R}))\cap L^{\infty}([0,T];L^2(|x|^{2\theta}\, dx))$ be a solution of \eqref{gbo} with initial condition $\phi$. 
\begin{itemize}
    \item[(i)] If $0<\theta\leq 1$, then it must be the case  that
    \begin{equation*}
      D^{(1+a)\theta}\phi, \, \, |x|^{\theta}\phi \in L^2(\mathbb{R}).  
    \end{equation*}
    In particular, it follows that $u\in L^{\infty}([0,T];\dot{H}^{(1+a)\theta}(\mathbb{R}))$. 
\item[(ii)] Let $1<\theta<\frac{1+2a}{2(1+a)}$. Consider the following conditions on  $\phi$
\begin{equation}\label{uniquecontassump1}
  D^{(1+a)(\theta-1)}(x\phi), \, \, |x|^{\theta}\phi  \in L^2(\mathbb{R}),
\end{equation}
and
\begin{equation}\label{uniquecontassump2}
   D^{(1+a)\theta}\phi,\, \, |x|^{(\theta-1)} D^{1+a}\phi\in L^2(\mathbb{R}).
\end{equation}
Then  \eqref{uniquecontassump1} holds true if and only if \eqref{uniquecontassump2} holds true.
\end{itemize}
\end{theorem}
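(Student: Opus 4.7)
The plan is a unique-continuation-type argument that uses Duhamel's formula to transfer the weighted $L^2$-integrability of the solution back to conditions on the initial datum via the action of $|x|^\theta$ on the free propagator $U(t)$, whose symbol is $\widehat{U(t)f}(\xi)=e^{it\psi(\xi)}\hat f(\xi)$ with $\psi(\xi)=\xi|\xi|^{1+a}$. Writing
\begin{equation*}
U(t)\phi=u(t)+\int_0^t U(t-t')(u\partial_xu)(t')\,dt',
\end{equation*}
I first show that both summands on the right sit in $L^\infty([0,T];L^2(|x|^{2\theta}\,dx))$: the first by hypothesis, the second by combining the weighted bound on $u$, the regularity $u\in C([0,T];H^s(\R))$, and the extra decay of $u\partial_xu$ provided by Proposition \ref{extradecaycond}. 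Passing to Fourier, the argument reduces to exploiting
\begin{equation*}
\Dta\!\bigl(e^{it\psi(\xi)}\hat\phi(\xi)\bigr)\in L^2(\R_\xi)\qquad\text{for every }t\in[0,T].
\end{equation*}

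The analytic core is a fractional Leibniz identity $\Dta(fg)=f\,\Dta g+g\,\Dta f+\mathcal R_\theta(f,g)$, with $\mathcal R_\theta$ controllable in $L^2$, combined with an explicit description of $\Dta e^{it\psi(\xi)}$. Using the homogeneity $\psi(\lambda\xi)=\sgn(\lambda)|\lambda|^{2+a}\psi(\xi)$, a change of variables in the singular integral defining $\Dta$ yields
\begin{equation*}
\Dta e^{it\psi(\xi)}=c(\theta,a)\,t^\theta|\xi|^{(1+a)\theta}e^{it\psi(\xi)}+\text{lower order},
\end{equation*}
with the remainder bounded in $L^2_\xi$ uniformly in $t\in[0,T]$. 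For part (i), feeding this into the Leibniz expansion with $f=e^{it\psi}$ and $g=\hat\phi$, and multiplying by the unimodular factor $e^{-it\psi}$, one obtains, modulo an $L^2$-controllable error $\mathcal E_t$,
\begin{equation*}
e^{-it\psi}\Dta\!\bigl(e^{it\psi}\hat\phi\bigr)=\Dta\hat\phi+c(\theta,a)\,t^\theta|\xi|^{(1+a)\theta}\hat\phi+\mathcal E_t.
\end{equation*}
Evaluating at two distinct times $0<t_1<t_2\leq T$ and subtracting isolates $|\xi|^{(1+a)\theta}\hat\phi\in L^2$, i.e.\ $D^{(1+a)\theta}\phi\in L^2$; feeding this back gives $\Dta\hat\phi\in L^2$, i.e.\ $|x|^\theta\phi\in L^2$. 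The persistence $u\in L^\infty([0,T];\dot H^{(1+a)\theta}(\R))$ then follows by running the same argument at each time $t_0\in[0,T]$ with $\phi$ replaced by $u(t_0)$.

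For part (ii) with $1<\theta<\frac{1+2a}{2(1+a)}$, the same preliminary reduction again gives $\Dta(e^{it\psi}\hat\phi)\in L^2$. I factor $\Dta=D^{\theta-1}_\xi\,\partial_\xi$ and use, since $\psi'(\xi)=(2+a)|\xi|^{1+a}$,
\begin{equation*}
\partial_\xi\!\bigl(e^{it\psi}\hat\phi\bigr)=-ie^{it\psi}\widehat{x\phi}+it(2+a)|\xi|^{1+a}e^{it\psi}\hat\phi.
\end{equation*}
Applying $D^{\theta-1}_\xi$ together with the fractional Leibniz rule and the same explicit form of $D^{\theta-1}_\xi e^{it\psi}$ expresses $e^{-it\psi}\Dta(e^{it\psi}\hat\phi)$, up to an $L^2$-controllable error, as a $t$-dependent linear combination of the four functions
\begin{equation*}
\widehat{|x|^\theta\phi},\quad t\,\widehat{|x|^{\theta-1}D^{1+a}\phi},\quad t^{\theta-1}\widehat{D^{(1+a)(\theta-1)}(x\phi)},\quad t^\theta\widehat{D^{(1+a)\theta}\phi},
\end{equation*}
with nonzero constant coefficients. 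Since the left-hand side lies in $L^2$ for every $t\in[0,T]$, if pair \eqref{uniquecontassump1} is in $L^2$, the remaining combination $\mathrm{const}\cdot t\,\widehat{|x|^{\theta-1}D^{1+a}\phi}+\mathrm{const}\cdot t^\theta\widehat{D^{(1+a)\theta}\phi}$ must also lie in $L^2$ for every $t$, and evaluating at two distinct times decouples the two unknowns, yielding \eqref{uniquecontassump2}. The converse implication is completely analogous.

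The main technical obstacle will be the precise pointwise and norm estimates on $\Dta e^{it\xi|\xi|^{1+a}}$ and the control of the Leibniz remainders uniformly in $t\in[0,T]$: because $1+a\in(-\tfrac32,-1)$ the low-frequency singularity $|\xi|^{(1+a)\theta}$ is strong, and verifying that the error kernels stay integrable uses both $-\tfrac52<a<-2$ and the upper bound $\theta<\frac{1+2a}{2(1+a)}$ in an essential way, mirroring the restrictions appearing in the persistence result of Theorem \ref{lwpw}.
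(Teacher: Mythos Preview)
Your Duhamel reduction is correct and matches the paper's opening move: the integral term $\int_0^t U(t-\tau)\partial_x u^2\,d\tau$ lies in $L^\infty([0,T];L^2(|x|^{2\theta}\,dx))$ (via the Section~\ref{localwellp} estimates for small $\theta$ and Proposition~\ref{extradecaycond} for large $\theta$), so $U(t)\phi\in L^2(|x|^{2\theta}\,dx)$---but only for \emph{almost every} $t$, not every $t$ as you write; you must then select two times in that a.e.\ set.

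The genuine gap is in your analytic core. The symbol $\Dta$ in this paper denotes the \emph{Stein} square-function derivative $\mathcal D^{\theta}_\xi$ of Theorem~\ref{stein}, which is a nonlinear, real-valued functional (a square root of an integral). A Leibniz \emph{identity} $\Dta(fg)=f\,\Dta g+g\,\Dta f+\mathcal R_\theta$ therefore does not make sense, and your displayed formula $\Dta e^{it\psi}=c(\theta,a)\,t^\theta|\xi|^{(1+a)\theta}e^{it\psi}+\cdots$ is type-incompatible (the left side is real, the right complex). If instead you intend the linear operator $D^\theta_\xi$, then a bilinear expansion exists, but bounding the remainder $\mathcal R_\theta(e^{it\psi},\hat\phi)$ in $L^2$ requires some fractional $\xi$-regularity of $\hat\phi$, i.e.\ some $|x|^\alpha\phi\in L^2$, which is precisely what you are trying to establish; the argument is circular. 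The homogeneity of $\psi$ yields at best the pointwise \emph{bounds} of Propositions~\ref{pontualn} and~\ref{nonintegrabprop}, not an identity with $L^2$-controlled remainder.

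The paper avoids any expansion. For part~(i) it picks two times $t_1<t$ in the a.e.\ set, sets $\tilde\phi=e^{it_1\psi}\hat\phi$, and uses only the elementary pointwise inequality
\[
|\tilde\phi(\xi)|\,\mathcal D^\theta\bigl(e^{i(t-t_1)\psi}\bigr)(\xi)\ \lesssim\ \mathcal D^\theta\bigl(e^{i(t-t_1)\psi}\tilde\phi\bigr)(\xi)+\mathcal D^\theta(\tilde\phi)(\xi),
\]
whose right-hand side is in $L^2$ by hypothesis (these are $\mathcal D^\theta(e^{it\psi}\hat\phi)$ and $\mathcal D^\theta(e^{it_1\psi}\hat\phi)$). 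A pointwise \emph{lower} bound $\mathcal D^\theta(e^{i(t-t_1)\psi})(\xi)\gtrsim|\xi|^{(1+a)\theta}$ for small $|\xi|$ (the computation behind Proposition~\ref{nonintegrabprop}) then forces $|\xi|^{(1+a)\theta}\hat\phi\in L^2$; feeding this back through Proposition~\ref{pontualn} gives $\mathcal D^\theta\hat\phi\in L^2$. For part~(ii) the paper does not attempt your four-term decoupling: it first applies part~(i) at level $\theta=1$ to secure $x\phi,\,D^{1+a}\phi\in L^2$, rewrites $xU(t)\phi=U(t)\bigl(x\phi-(2+a)tD^{1+a}\phi\bigr)$, and then, assuming one of \eqref{uniquecontassump1} or \eqref{uniquecontassump2}, uses Lemma~\ref{Dthetau} to remove the corresponding summand and applies part~(i) with weight $\theta-1$ to the other summand.
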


Note that in Theorem \ref{Uniquecont1}, we consider $u$ as a solution of \eqref{gbo} in the class $ C([0,T];H^s(\mathbb{R}))\cap L^{\infty}([0,T];L^2(|x|^{2\theta}\, dx))$, which in general does not imply that $|x|^{\theta}u(x,0) =|x|^{\theta}\phi(x) \in L^2(\mathbb{R})$. However, our UCP in Theorem \ref{Uniquecont1} actually shows that such a condition on the initial data must hold true. Moreover, in Theorem \ref{lwpw}, the propagation of weights requires a condition such as $u(t)\in \dot{H}^{(1+a)\widetilde{\theta}}(\mathbb{R})$ for some $\widetilde{\theta}>0$, but in contrast, the hypothesis of Theorem \ref{Uniquecont1} does not make use of such requirement. In this sense, Theorem \ref{Uniquecont1} applies to a wider class of solutions of \eqref{gbo}.

Let us deduce some consequences of Theorem \ref{Uniquecont1}. Consider  $\phi\in H^{\frac{3}{2}^{+}}(\mathbb{R})\cap L^2(|x|^{2\theta}\,dx)$ for some fixed $0<\theta\leq 1$, such that $D^{(1+a)\theta}\phi\notin L^2(\mathbb{R})$. Then, Theorem \ref{Uniquecont1} (i) shows that there does not exist a time $T_1>0$ and a solution $u$ of \eqref{gbo} with initial condition $\phi$ such that $u \in C([0,T_1];H^s(\mathbb{R}))\cap L^{\infty}([0,T_1];L^{2}(|x|^{2\theta}\, dx))$.  In this sense, the conditions $D^{(1+a)\theta}\phi, |x|^{\theta}\phi\in L^2(\mathbb{R})$ in Theorem \ref{lwpw} (i) are sharp.

When $\theta>1$, Theorem \ref{Uniquecont1} provides similar conclusions to those presented above. For example, let  $1<\theta<\frac{1+2a}{2(1+a)}$, $s>0$ be given as in Theorem \ref{Uniquecont1}, $\phi \in H^{s}(\mathbb{R})\cap L^2(|x|^{2\theta}\, dx)$ be such that \eqref{uniquecontassump1} holds true, and $D^{(1+a)\theta}\phi\notin L^2(\mathbb{R})$. For an example of such an initial condition, take $\varphi \in C^{\infty}_0(\mathbb{R})$ be such that $\varphi(\xi)=1$, whenever $|\xi|\leq 1$, and set $\phi=\varphi^{\vee}$. Thus, Theorem \ref{Uniquecont1} establishes that there does not exist a time $T_1>0$, and a solution $u$ of \eqref{gbo} with initial condition $\phi$ such that $u \in C([0,T_1];H^s(\mathbb{R}))\cap L^{\infty}([0,T_1];L^{2}(|x|^{2\theta}\, dx))$.

As a further consequence of the proof of Theorem \ref{Uniquecont1}, we obtain the following UCP for solutions of \eqref{gbo}.

\begin{proposition}\label{propostimes1}
Assume $-\frac{5}{2}<a<-2$ and $s\geq\max\{\frac{3}{(5+2a)},2^{+}\}$. Let $u\in C([0,T]; Z_{s,1}^{a,1})$ be a solution of \eqref{gbo}. If there exist two different times $t_1,t_2\in [0,T]$ such that
    \begin{equation*}
        u(t_j)\in L^2(|x|^{2^{+}}\, dx), \,\, j=1,2,
    \end{equation*}
then
\begin{equation}\label{identitywithtimes}
|x|^{0^{+}}U(t_j)((x\phi)-(2+a)t_jD^{1+a}\phi)\in L^2(\mathbb{R}), \, \, j=1,2. 
\end{equation}
In particular, the initial condition $\phi$ satisfies \eqref{uniquecontassump1} if and only if it satisfies \eqref{uniquecontassump2}.
\end{proposition}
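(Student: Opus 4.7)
The plan is to deduce \eqref{identitywithtimes} from a Duhamel identity for $xu(t)$ and then use the two-time information to decouple the roles of $x\phi$ and $D^{1+a}\phi$. The starting point is the commutator identity
\begin{equation*}
 x\,U(t)\psi \;=\; U(t)\bigl(x\psi-(2+a)t\,D^{1+a}\psi\bigr),
\end{equation*}
which is obtained by writing $x=\mathcal{F}^{-1}(i\partial_\xi)\mathcal{F}$ and using $\partial_\xi(\xi|\xi|^{1+a})=(2+a)|\xi|^{1+a}$ in the symbol $e^{it\xi|\xi|^{1+a}}$ of $U(t)$. Combining with Duhamel, $u(t)=U(t)\phi-\tfrac12\int_0^t U(t-s)\partial_x(u^2)(s)\,ds$, and rearranging gives
\begin{equation*}
 U(t_j)\bigl(x\phi-(2+a)t_j D^{1+a}\phi\bigr)\;=\;xu(t_j)+\tfrac12\int_0^{t_j}U(t_j-s)\bigl[x\partial_x(u^2)-(2+a)(t_j-s)D^{1+a}\partial_x(u^2)\bigr](s)\,ds.
\end{equation*}

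The heart of the proof is to show that $|x|^{0^+}$ applied to the right-hand side lies in $L^2$. The boundary term $|x|^{0^+}xu(t_j)=|x|^{1^+}u(t_j)$ is in $L^2$ by hypothesis. For the Duhamel integral I would convert the $x$-weight by writing $x\partial_x(u^2)=\partial_x(xu^2)-u^2$, using the bound $xu\in C([0,T];L^2)$ supplied by $u\in C([0,T];Z_{s,1}^{a,1})$, and exploiting the smoothing of $D^{1+a}$ (since $1+a<0$). The outer $|x|^{0^+}$ on $U(t_j-s)$ is handled via the Fourier identity $\widehat{|x|^{0^+}h}\approx \mathcal{D}^{0^+}_\xi\widehat{h}$ and a Leibniz-type splitting
\begin{equation*}
 \mathcal{D}^{0^+}_\xi\bigl(e^{i(t_j-s)\xi|\xi|^{1+a}}\widehat{h}(\xi)\bigr)=e^{i(t_j-s)\xi|\xi|^{1+a}}\mathcal{D}^{0^+}_\xi\widehat{h}+\bigl[\mathcal{D}^{0^+}_\xi, e^{i(t_j-s)\xi|\xi|^{1+a}}\bigr]\widehat{h},
\end{equation*}
whose commutator term absorbs derivatives of the non-smooth phase. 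The Sobolev threshold $s\geq\max\{3/(5+2a),2^+\}$ is precisely what is needed to control the low-frequency singularity introduced by $\partial_\xi(\xi|\xi|^{1+a})=(2+a)|\xi|^{1+a}$, in a manner parallel to Proposition \ref{extradecaycond}. Collecting the bounds yields \eqref{identitywithtimes}.

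For the equivalence of \eqref{uniquecontassump1} and \eqref{uniquecontassump2}, set $g_j:=U(t_j)\bigl(x\phi-(2+a)t_jD^{1+a}\phi\bigr)$. Since $U(-t_j)g_j=x\phi-(2+a)t_jD^{1+a}\phi$, subtracting the two identities gives
\begin{equation*}
 -(2+a)(t_1-t_2)D^{1+a}\phi \;=\; U(-t_1)g_1-U(-t_2)g_2,
\end{equation*}
and similarly $x\phi$ is recovered as a linear combination of $U(-t_j)g_j$. Applying $|x|^{0^+}$ to these expressions and tracking the weight through $U(-t_j)$ via the Leibniz rule for $\mathcal{D}^{0^+}_\xi$ on $e^{-it_j\xi|\xi|^{1+a}}\widehat{g}_j$ (again producing commutator remainders controlled by the Sobolev regularity of $g_j$), one deduces $|x|^{0^+}D^{1+a}\phi\in L^2$ and $|x|^{1^+}\phi\in L^2$, which are the ``weight'' halves of \eqref{uniquecontassump2} and \eqref{uniquecontassump1}. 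The remaining Fourier-side conditions in the two assumptions are interchanged by Plancherel and the product-rule identity
\begin{equation*}
 \partial_\xi\bigl(|\xi|^{1+a}\widehat{\phi}\bigr)\;=\;(1+a)\,\mathrm{sgn}(\xi)|\xi|^{a}\widehat{\phi}+|\xi|^{1+a}\partial_\xi\widehat{\phi},
\end{equation*}
which, after multiplying by $|\xi|^{(1+a)(\theta-1)}$ (with $\theta=1^+$), expresses $D^{(1+a)\theta}\phi$ as the sum of $D^{(1+a)(\theta-1)}(x\phi)$ and a term whose $L^2$ norm is already controlled by the weight conditions just derived.

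The principal obstacle will be the weighted $L^2$ estimate on the Duhamel integral: because the phase $\xi|\xi|^{1+a}$ is non-smooth at $\xi=0$ for $a<-2$, neither the standard fractional Leibniz rule nor the usual pseudodifferential commutator bounds apply off the shelf. One is forced to a frequency-localized decomposition, isolating the singular low-frequency regime from the smooth high-frequency regime; only the high-frequency piece admits the classical commutator estimate, while the low-frequency piece must be absorbed by the smoothing of $D^{1+a}$ together with the Sobolev regularity of $u$. A secondary subtlety is that in the equivalence argument the weight must be pushed through $U(-t_j)$ without losing integrability, again relying critically on the narrow dispersion window $-\tfrac{5}{2}<a<-2$.
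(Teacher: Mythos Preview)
Your derivation of \eqref{identitywithtimes} is correct and matches the paper's approach: the Duhamel integral multiplied by $|x|^{1^+}$ is exactly what Proposition \ref{extradecaycond} controls (case $\theta=1$, which is where the regularity threshold $s\geq\max\{3/(5+2a),2^{+}\}$ enters), and the boundary term $|x|^{1^+}u(t_j)$ is the hypothesis. The paper phrases this as $U(t_j)\phi\in L^2(|x|^{2^+}\,dx)$ and then invokes Proposition \ref{proptwotimes}, but your commutator identity unpacks the same computation.

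The equivalence argument, however, has a genuine gap. You form $-(2+a)(t_1-t_2)D^{1+a}\phi=U(-t_1)g_1-U(-t_2)g_2$ and then try to push $|x|^{0^+}$ through $U(-t_j)$ via the Leibniz rule for $\mathcal{D}^{0^+}_\xi$. But the Leibniz/Proposition \ref{pontualn} upper bound produces the term $\mathcal{D}^{0^+}(e^{-it_j\xi|\xi|^{1+a}})|\widehat{g_j}(\xi)|\lesssim |\xi|^{(1+a)\cdot 0^+}|\widehat{g_j}(\xi)|$, and placing this in $L^2$ requires $g_j\in\dot H^{(1+a)\cdot 0^+}$. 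Since $g_j=xU(t_j)\phi$, near $\xi=0$ this amounts to $|\xi|^{(1+a)\cdot 0^+}\partial_\xi\widehat\phi$ and $|\xi|^{(1+a)(1+0^+)}\widehat\phi$ being in $L^2$, neither of which follows from $\phi\in Z_{s,1}^{a,1}$ or from the hypotheses \eqref{uniquecontassump1}/\eqref{uniquecontassump2} you are trying to relate. Positive Sobolev regularity of $g_j$ does not control this negative homogeneous norm, so the ``commutator remainders controlled by the Sobolev regularity of $g_j$'' claim is unfounded.

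The paper avoids this by a different mechanism, packaged as Proposition \ref{proptwotimes}(i) (proved inside Theorem \ref{Uniquecont1}). Rather than pushing the weight through $U(-t_j)$ via an upper bound, it uses the \emph{lower} bound $\mathcal{D}^{\theta}(e^{i\tau\xi|\xi|^{1+a}})(\xi)\gtrsim |\xi|^{(1+a)\theta}$ near $\xi=0$ (see \eqref{newconclu2}, same computation as in Proposition \ref{nonintegrabprop}). Having $U(t)\psi\in L^2(|x|^{2\theta})$ at two different times $t$, $t_1$ forces $|\widehat\psi(\xi)|\mathcal{D}^\theta(e^{i(t-t_1)\xi|\xi|^{1+a}})\in L^2$, and the lower bound then \emph{extracts} $D^{(1+a)\theta}\psi\in L^2$ for free. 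Applying this with $\psi=D^{1+a}\phi$ (respectively $\psi=x\phi$), after the other term has been peeled off using \eqref{uniquecontassump1} (respectively \eqref{uniquecontassump2}) and Lemma \ref{Dthetau}, is what yields the equivalence. Your product-rule identity at the end does not close the loop either: multiplying by $|\xi|^{(1+a)(\theta-1)}$ produces $|\xi|^{(1+a)\theta}\partial_\xi\widehat\phi$, which is none of the four quantities in \eqref{uniquecontassump1}--\eqref{uniquecontassump2}.
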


\begin{proposition}\label{propostimes3}
Suppose $-\frac{5}{2}<a<-2$, $s\geq \max\{-\frac{9+6a}{2(5+2a)},2^{+}\}$, and $\theta=\Big(\frac{1+2a}{2(1+a)}\Big)^{-}$. Let $u\in C([0,T]; H^s_{a,\widetilde{\theta}}(\mathbb{R}))\cap L^{\infty}([0,T];L^2(|x|^{2\theta}\, dx))$ be a solution of \eqref{gbo} with $\widetilde{\theta}=\Big(-\frac{3}{2(1+a)}\Big)^{-}$. If there exist two times $t_1,t_2\in [0,T]$ with $t_2>t_1$ such that
    \begin{equation*}
        u(t_j)\in L^2(|x|^{2\big(\frac{1+2a}{2(1+a)}\big)}\, dx), \,\, j=1,2,
    \end{equation*}
    and
 \begin{equation}\label{extracondtime1}
D^{(1+a)\big(\frac{1+2a}{2(1+a)}\big)}u(t_1),\, \, D^{(1+a)\big(\frac{1+2a}{2(1+a)}-1\big)}(xu(t_1)),\, \, |x|^{\frac{1+2a}{2(1+a)}-1}D^{1+a}u(t_1)\in L^2(\mathbb{R}),
    \end{equation}
then
\begin{equation*}
u\equiv 0.
\end{equation*}
\end{proposition}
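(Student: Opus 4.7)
The plan is to adapt the two-time uniqueness strategy used in Proposition \ref{propostimes1} to the critical exponent $\theta_\ast := \tfrac{1+2a}{2(1+a)}$, which is precisely the threshold at which the weighted propagation in Theorem \ref{lwpw}(iii) breaks down. By the time translation $v(x,t):=u(x,t+t_1)$, one may assume $t_1=0$, so that $\phi:=u(0)$ satisfies \eqref{extracondtime1} together with $|x|^{\theta_\ast}\phi\in L^2(\R)$; that is, $\phi\in \mathcal{Z}^{a,\theta_\ast}_{s,\theta_\ast}$. The goal is then to prove $\phi\equiv 0$ and invoke the uniqueness of the solution in the class \eqref{finalclass2}.

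I would start from the Duhamel formula
\[
u(t)=U(t)\phi-\tfrac{1}{2}\int_0^t U(t-s)\,\p_x(u^2)(s)\,ds,
\]
apply $\mathcal{D}^{\theta_\ast}_\xi$ (the Fourier-side avatar of the weight $|x|^{\theta_\ast}$) to both sides, and use a Leibniz-type calculus for the action of $\mathcal{D}^{\theta_\ast}_\xi$ on the oscillatory factor $e^{it\xi|\xi|^{a+1}}$, together with the elementary identity $\p_\xi(\xi|\xi|^{a+1})=(a+2)|\xi|^{a+1}$, to produce
\[
\mathcal{D}^{\theta_\ast}_\xi\widehat{u}(\xi,t)=e^{it\xi|\xi|^{a+1}}\mathcal{D}^{\theta_\ast}_\xi\widehat{\phi}(\xi)+\Lambda[\phi](\xi,t)+\mathcal{N}[u](\xi,t),
\]
where $\Lambda[\phi]$ is a finite sum of linear pieces built from $\mathcal{D}^{\theta_\ast-1}_\xi(|\xi|^{a+1}\widehat{\phi})$ (the Fourier counterpart of $|x|^{\theta_\ast-1}D^{1+a}\phi$), $\mathcal{D}^{\theta_\ast}_\xi(|\xi|^{a+1}\widehat{\phi})$, and lower-order error pieces. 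The three assumptions \eqref{extracondtime1}, combined with $|x|^{\theta_\ast}\phi\in L^2$, place every summand of $\Lambda[\phi]$ and the first term on the right-hand side into $L^\infty([0,T];L^2_\xi)$.

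The core of the argument is the low-frequency analysis of the nonlinear Duhamel term
\[
\widehat{\mathcal{N}[u]}(\xi,t)=\tfrac{i}{2}\,\mathcal{D}^{\theta_\ast}_\xi\!\left(\xi\int_0^t e^{i(t-s)\xi|\xi|^{a+1}}\,\widehat{u^2}(\xi,s)\,ds\right).
\]
After a Taylor expansion $\widehat{u^2}(\xi,s)=\|u(s)\|_{L^2}^2+\xi\,r(\xi,s)$ near $\xi=0$ and an integration by parts in $s$ (exploiting the fact that the phase $\xi|\xi|^{a+1}$ is \emph{large} near the origin since $a+1<-1$), one would isolate a low-frequency profile of the form
\[
\Bigl(\int_0^{t}\|u(s)\|_{L^2}^2\,ds\Bigr)\,\mathcal{D}^{\theta_\ast}_\xi\bigl(\chi(\xi)\,|\xi|^{-(a+1)}\bigr),
\]
with $\chi$ a smooth low-frequency cutoff. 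An application of Proposition \ref{nonintegrabprop} (the non-$L^2$ criterion at the critical exponent) then shows that this profile fails to lie in $L^2(\R)$ unless $\int_0^{t}\|u(s)\|_{L^2}^2\,ds=0$, while the remaining pieces of $\mathcal{N}[u]$ (the tail $\xi\,r(\xi,s)$ and the high-frequency part) can be controlled in $L^2$ by the bilinear and commutator estimates already developed in the proof of Theorem \ref{lwpw}(iii). Setting $t=t_2$, the hypothesis $u(t_2)\in L^2(|x|^{2\theta_\ast}dx)$ forces the obstruction to vanish, giving $\int_0^{t_2}\|u(s)\|_{L^2}^2\,ds=0$. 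Mass conservation $\|u(s)\|_{L^2}=\|\phi\|_{L^2}$ and $t_2>0$ then imply $\phi\equiv 0$, whence $u\equiv 0$ on $[0,T]$ by uniqueness.

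The hard part will be the rigorous extraction of the singular low-frequency profile in $\mathcal{N}[u]$. Because $a+1<-1$, the phase $\xi|\xi|^{a+1}$ is itself singular at $\xi=0$, so the Leibniz/chain rule for $\mathcal{D}^{\theta_\ast}_\xi$ applied to $e^{i(t-s)\xi|\xi|^{a+1}}\widehat{u^2}(\xi,s)$ requires a delicate frequency-localized expansion and careful tracking of error terms. One must also verify that no cancellation takes place between the leading $\|u(s)\|_{L^2}^2$-contribution and the remainders at the critical exponent $\theta_\ast$, which is exactly the role played by Proposition \ref{nonintegrabprop}.
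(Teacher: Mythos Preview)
Your architecture is right---reduce to $t_1=0$, use Duhamel, dispose of the linear piece $U(t)\phi$ via \eqref{extracondtime1} and Lemma \ref{Dthetau}, and extract from the nonlinear integral a low-frequency obstruction carrying the factor $\|\phi\|^2$ to which Proposition \ref{nonintegrabprop} applies. But the specific singular profile you name is wrong, and the mechanism you propose (integration by parts in $s$) does not produce it.

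The function $\chi(\xi)|\xi|^{-(a+1)}$ is \emph{not} singular at the relevant scale: since $-1-a\in(1,\tfrac32)$ it is $C^1$ with compact support, and a short computation shows $\theta_\ast=\tfrac{1+2a}{2(1+a)}<(-1-a)+\tfrac12$, so $\chi(\xi)|\xi|^{-(a+1)}\in H^{\theta_\ast}(\R)$ and $\mathcal{D}^{\theta_\ast}\bigl(\chi|\xi|^{-(a+1)}\bigr)\in L^2$. Thus the profile you isolate cannot force $\|\phi\|=0$. Integration by parts in $s$ makes matters worse: dividing by the phase $\xi|\xi|^{1+a}$ introduces the factor $|\xi|^{-(2+a)}$, which is \emph{small} near $\xi=0$ and hence regularizes rather than singularizes.

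The paper's route is to first peel one weight using $xU(t)f=U(t)\bigl(xf-(2+a)tD^{1+a}f\bigr)$, which reduces the question to $|x|^{\theta_\ast-1}$--control of $\int_0^{t_2}U(t_2-\tau)(t_2-\tau)\partial_xD^{1+a}u^2(\tau)\,d\tau$. On the Fourier side, after replacing $\widehat{u^2}(0,\tau)$ by $\|\phi\|^2$ (mass conservation) and evaluating the integral $\int_0^{t_2}(t_2-\tau)\omega e^{i(t_2-\tau)\omega}d\tau$ exactly with $\omega=\xi|\xi|^{1+a}$ (see \eqref{decompositionApart2}), one isolates the term $-it_2\,e^{it_2\xi|\xi|^{1+a}}\varphi(\xi)\,\|\phi\|^2$. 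It is \emph{this} exponential profile, at order $\theta_\ast-1=-\tfrac{1}{2(1+a)}$, that Proposition \ref{nonintegrabprop} declares non-$L^2$; all remaining pieces ($\mathcal{A}_{1,2},\mathcal{A}_2,\mathcal{A}_3$ in \eqref{decompositionApart}--\eqref{decompositionApart2}) are shown to lie in $L^2$ by the estimates already developed in Proposition \ref{extradecaycond}. So the obstruction is $t_2\|\phi\|^2\,\mathcal{D}^{\theta_\ast-1}\bigl(e^{it_2\xi|\xi|^{1+a}}\varphi\bigr)$, not the algebraic profile you wrote; replace your low-frequency extraction accordingly.
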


We remark that the result in Proposition \ref{propostimes3} establishes that $\frac{1+2a}{2(1+a)}$ is the maximum polynomial decay propagated by solutions of \eqref{gbo}. Indeed, let $-\frac{5}{2}<a<-2$, $s>0$ as in Proposition \ref{propostimes3}, and consider $\phi\in H^s(\mathbb{R})\cap L^2(|x|^{2\big(\frac{1+2a}{2(1+a)}\big)}\, dx)$ such that
 \begin{equation*}
D^{(1+a)\big(\frac{1+2a}{2(1+a)}\big)}\phi,\, \, D^{(1+a)\big(\frac{1+2a}{2(1+a)}-1\big)}(x \phi),\, \, |x|^{\frac{1+2a}{2(1+a)}-1}D^{1+a}\phi\in L^2(\mathbb{R}).        
\end{equation*}
Notice that such conditions are motivated by the results of Theorems \ref{lwpw} and Proposition \ref{propostimes1}. Then if $\phi \neq 0$, Theorem \ref{lwpw} assures that there exist a time $T>0$ and a unique solution $u$ of \eqref{gbo} in the class $C([0,T]; H^s_{a,\widetilde{\theta}}(\mathbb{R}))\cap L^{\infty}([0,T];L^2(|x|^{2\theta}\, dx))$ with $\theta=\Big(\frac{1+2a}{2(1+a)}\Big)^{-}$ and $\widetilde{\theta}=\Big(-\frac{3}{2(1+a)}\Big)^{-}$. However, Proposition \ref{propostimes3} implies that for any time $T_1>0$, $u \notin L^{\infty}([0,T_1];L^2(|x|^{2\big(\frac{1+2a}{2(1+a)}\big)}\, dx))$. 

\begin{remark}
Let us compare our results with the conclusions for $a\geq -2$ and $a\neq -1$ in \eqref{gbo} obtained in \cite{fp,Rianogbo,FLP1}. 

(a) In the case of Theorem \ref{lwpw}, the conditions $D^{(1+a)\theta}\phi, |x|^{\theta-1}D^{1+a}\phi, D^{(1+a)(\theta-1)}(x\phi) \in L^2(\mathbb{R})$ for dispersions $a\geq -2$ are not necessary and are obtained in different ways. For example, it is common to use the zero-mean property $\widehat{\phi}(0)=0$ to deduce that $D^{(1+a)\theta}\phi \in L^2(\mathbb{R})$. For this reason, when $a\geq -2$  one uses the spaces $H^s(\mathbb{R})\cap L^2(|x|^{2\theta}\, dx)\cap \{ f\in H^s(\mathbb{R}): \widehat{f}(0)=0\}$. However, when $a<-2$ the derivative $D^{(1+a)\theta}$ is too singular to guarantee integrability for arbitrary functions with zero mean. Also, note that if $\phi\in L^2(|x|^{2\theta}\, dx)$, then $\widehat{\phi}\in H^{\theta}(\mathbb{R})$, but in our case $\theta<\frac{1+2a}{2(1+a)}<\frac{3}{2}$, which is small to extract useful properties for $\widehat{\phi}$ and $\partial_{\xi}\widehat{\phi}$, as to say that it is H\"older or Lipschitz continuous. Hence, in this work, we have substituted the zero mean property by hypotheses $D^{(1+a)\theta}\phi, |x|^{\theta-1}D^{1+a}\phi, D^{(1+a)(\theta-1)}(x\phi) \in L^2(\mathbb{R})$.    

(b) In the case of Proposition \ref{propostimes1}, when $a>-2$, $a\neq -1$, in \cite{fp,Rianogbo,FLP1}, it has been proved that if for two different times $t_1,t_2$ one has that a solution $u$ of \eqref{gbo} satisfies $|x|^{\frac{5}{2}+a}u(t_j)\in L^2(\mathbb{R})$, then the zero mean holds true, i.e., $\widehat{\phi}(0)=0$. However, when $a<-2$, as we mentioned before, the condition $\widehat{\phi}(0)=0$ is being replaced by \eqref{identitywithtimes}. We emphasize that when $a \geq -2$, using \eqref{identitywithtimes} an a few extra steps, we obtain the UCP results deduced in \cite{fp,Rianogbo,FLP1}.

(c) In the case of the UCP in Proposition \ref{propostimes3}, when the dispersion $a>-2$ with $a\neq -1$, in \cite{fp,Rianogbo,FLP1}, it has been proved that if for three different times $t_1,t_2,t_3$, the solution of \eqref{gbo} satisfies $|x|^{\frac{7}{2}+a}u(t_j)\in L^2(\mathbb{R})$, then $u=0$. Such a result depends strongly on the following identity for the solution of \eqref{gbo},
\begin{equation}\label{identitypositivea}
\int xu(x,t)\, dx =\int x \phi(x)\, dx+\frac{t}{2}\|\phi\|^2.
\end{equation}
In our case, since we study arbitrary initial conditions for which the maximum decay limit is $\frac{1+2a}{2(1+a)}<\frac{3}{2}$,  the above identity is not always justified.  Additionally, for $a<-2$, we note that estimates \eqref{decompositionApart} and \eqref{decompositionApart2} are more beneficial than in the case $a>-2$.  For this reason, we do not need identity \eqref{identitypositivea}, and we can extend the polynomial decay propagated by solutions of \eqref{gbo} to $\frac{1+2a}{2(1+a)}> \frac{7}{2}+a$. Complementing this, our results from Proposition \ref{propostimes3} show that if a sufficiently regular solution of \eqref{gbo} decays at two times as $\frac{1+2a}{2(1+a)}$ and \eqref{extracondtime1} holds, then such a solution must be identically zero. Note that although we do not use three times (which is the typical assumption when $a>-2$), we do have two times conditions and $|x|^{\theta-1}D^{1+a}\phi, D^{(1+a)(\theta-1)}(x\phi) \in L^2(\mathbb{R})$, which somehow shows that even for the case $a<-2$, three conditions are still needed to get Proposition \ref{propostimes3}.
\end{remark}

This paper is organized as follows. In Section \ref{notation}, we present the notation and preliminaries needed to derive our results. In Section \ref{localwellp}, we show our well-posedness and persistence results in weighted spaces stated in Theorem \ref{lwpw}. In Section \ref{uniquesect}, we deduce our unique continuation principles. More precisely, we prove Theorem \ref{Uniquecont1}, and Propositions \ref{propostimes1} and \ref{propostimes3}. Finally, we conclude with an appendix where we show local well-posedness result for the Cauchy problem \eqref{gbo} in spaces $H^s_{a,\theta}(\mathbb{R})$.


\section{Notation and Preliminaries}\label{notation}

We will use standard notation. Given $a$ and $b$ be two positive numbers, we say that $a\lesssim b$ if there exists a constant $c>0$ such that $a\leq c b$, and we say that $a \gtrsim b$ if $b\lesssim a$. We write $a\sim b$ if $a\lesssim b$ and $b\lesssim a$. The commutator between two operators $A$, $B$ is denoted as $[A,B]=AB-BA$. 

$C^{\infty}_0(\mathbb{R})$ denotes the class of smooth functions with compact support on $\mathbb{R}$. Given $1\leq p\leq \infty$,  we denote by $\|\cdot\|_{L^p}$ the usual norm in $L^p(\mathbb{R})$; when $p=2$, as we mentioned before, we write $\|\cdot\|_{L^2}=:\|\cdot\|$, but depending on the context and to emphasize the variable of integration, we will also use $\|\cdot\|_{L^2}$. Given $s\in \mathbb{R}$, $J^{s}$ is defined via Fourier transform as $\widehat{J^{s} f}(\xi)=\langle \xi \rangle^{s} \widehat{f}(\xi)$, with $\langle \xi \rangle=(1+|\xi|^2)^{1/2}$. Thus, the Sobolev spaces $H^s(\mathbb{R})$ consists of all tempered distributions $f$ such that $\|f\|_{H^s}=\|J^sf\|<\infty$.

The group of bounded linear operators associated with the linear equation $\partial_tu-\partial_x D^{a+1}u=0$ is defined via Fourier transform by
\begin{equation}\label{group}
(U(t)\phi)^{\wedge}(\xi)=e^{it\xi |\xi|^{1+a}}\widehat{\phi} (\xi), \quad t\in \R.
\end{equation}

The following fractional Leibniz rule (see \cite{Grafakos,KPV}) will be used in our arguments.

\begin{lemma}
Let $s>0$ and $1<p<\infty$, then
\begin{equation}\label{leibfract}
\|D^s(hf)\|_{L^p}\les \|D^s h\|_{L^p}\|f\|_{L^\infty}+\|D^s f\|_{L^p}\|h\|_{L^\infty}.
\end{equation}

\end{lemma}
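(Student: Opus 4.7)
The statement is the classical fractional Leibniz rule of Kato-Ponce type, so my plan is to prove it by Littlewood-Paley theory and Bony's paraproduct decomposition, following the well-known harmonic analysis route of Grafakos and Kato-Ponce.

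First, I would fix a smooth Littlewood-Paley partition of unity $1=\sum_{j\in\Z}\varphi_j(\xi)$ with $\varphi_j$ supported in $|\xi|\sim 2^j$, together with the low-frequency cut-offs $S_j=\sum_{k\le j-1}\Delta_k$. Using Bony's decomposition I would split
\[
hf=T_h f+T_f h+R(h,f),\qquad T_h f=\sum_{j}S_{j-3}h\cdot\Delta_j f,\quad R(h,f)=\sum_{|j-k|\le 2}\Delta_j h\,\Delta_k f,
\]
and reduce the proof to estimating $\|D^s T_h f\|_{L^p}$, $\|D^s T_f h\|_{L^p}$ and $\|D^s R(h,f)\|_{L^p}$ separately.

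For the two paraproducts, the key observation is that each term $S_{j-3}h\cdot\Delta_j f$ has frequency localized in the annulus $|\xi|\sim 2^j$, so $D^s$ essentially multiplies it by $2^{js}$. Applying the Littlewood-Paley square function characterization of $L^p$ (valid for $1<p<\infty$), together with the pointwise bound $|S_{j-3}h|\lesssim Mh\lesssim\|h\|_{L^\infty}$ coming from the maximal function, I would get
\[
\|D^s T_h f\|_{L^p}\lesssim \Big\|\Big(\sum_j 2^{2js}|S_{j-3}h|^2|\Delta_j f|^2\Big)^{1/2}\Big\|_{L^p}\lesssim \|h\|_{L^\infty}\|D^s f\|_{L^p},
\]
and the symmetric estimate for $T_f h$. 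This part is painless because the frequencies are well separated.

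The main obstacle is the remainder $R(h,f)$, since here the two factors live at comparable frequencies and the resulting sum is no longer automatically frequency-localized at the same scale: the product $\Delta_j h\cdot\Delta_j f$ may have frequency anywhere in $|\xi|\lesssim 2^j$. To handle it I would further decompose $D^s R(h,f)=\sum_m\Delta_m D^s R(h,f)$, use that only pairs with $\max(j,k)\gtrsim m$ contribute, and extract the factor $2^{ms}\lesssim 2^{\max(j,k)s}$. One then absorbs $2^{js}$ into $\Delta_j h$ (or $\Delta_k f$), estimates the other factor in $L^\infty$ by $\|f\|_{L^\infty}$ (respectively $\|h\|_{L^\infty}$), and collapses the $\ell^2$ sum over $j$ via the square function. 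This yields the bound $\|D^s R(h,f)\|_{L^p}\lesssim\|D^s h\|_{L^p}\|f\|_{L^\infty}+\|D^s f\|_{L^p}\|h\|_{L^\infty}$, and combining the three estimates gives \eqref{leibfract}. The hard step is precisely this high-high interaction, where one must carefully exploit Bernstein's inequality and the Fefferman-Stein vector-valued maximal inequality to handle the coupling between the outer frequency $m$ and the inner frequency $j$.
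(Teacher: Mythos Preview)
The paper does not give its own proof of this lemma: it simply states the inequality as the classical fractional Leibniz rule and cites \cite{Grafakos,KPV}. Your Littlewood--Paley/paraproduct argument is precisely the standard route taken in those references (in particular in Grafakos--Oh), so your approach is correct and coincides with what the paper defers to.
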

We will apply the following commutator estimate.
\begin{proposition}\label{comuDs1}
Let $s\in (0,1)$ and $1<p<\infty$, then 
\begin{equation}\label{comuDs2}
\|[D^s,g]f\|_{L^p} \les \|D^s g\|_{L^\infty}\|f\|_{L^p},
\end{equation}
and
\begin{equation}\label{comuDs3}
\|[D^s,g]f\|_{L^p} \les \|D^s g\|_{L^p}\|f\|_{L^{\infty}}.
\end{equation}
\end{proposition}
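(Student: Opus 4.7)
The plan is to leverage the pointwise Riesz representation for the fractional derivative valid when $s\in(0,1)$, namely
\begin{equation*}
D^s f(x)=c_s\,\text{p.v.}\int_{\mathbb{R}}\frac{f(x)-f(y)}{|x-y|^{1+s}}\,dy,
\end{equation*}
which yields the kernel form of the commutator
\begin{equation*}
[D^s,g]f(x)=c_s\,\text{p.v.}\int_{\mathbb{R}}\frac{(g(x)-g(y))f(y)}{|x-y|^{1+s}}\,dy.
\end{equation*}
Thus the operator $Tf:=[D^s,g]f$ is a singular integral with kernel $K(x,y)=c_s(g(x)-g(y))|x-y|^{-1-s}$. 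My strategy is to identify $T$ as a Calder\'on--Zygmund operator in each of the two settings and then invoke the standard $L^p$-boundedness theory.

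For \eqref{comuDs2}, I would first invoke the Stein-type characterization $\|g\|_{\dot C^s}\les\|D^s g\|_{L^\infty}$, valid precisely for $s\in(0,1)$. This gives the size bound $|K(x,y)|\les \|D^s g\|_{L^\infty}|x-y|^{-1}$, so $K$ is a standard Calder\'on--Zygmund kernel with constant controlled by $\|D^s g\|_{L^\infty}$; the H\"ormander smoothness condition follows analogously from H\"older-type control on difference quotients of $g$. The remaining ingredient is the $L^2$-boundedness of $T$, which I would attack via a Littlewood--Paley decomposition $g=\sum_{k}\Delta_k g$ with the key gain $\|\Delta_k g\|_{L^\infty}\les 2^{-ks}\|D^s g\|_{L^\infty}$, analyzing each piece $[D^s,\Delta_k g]$ explicitly on the Fourier side and summing a geometric series in $k$. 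Once the $L^2$-bound $\|Tf\|\les\|D^s g\|_{L^\infty}\|f\|$ is in place, standard Calder\'on--Zygmund theory upgrades it to the full range $1<p<\infty$.

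For \eqref{comuDs3}, since $D^s$ is self-adjoint, the commutator is anti-self-adjoint: $\langle[D^s,g]f,h\rangle=-\langle f,[D^s,g]h\rangle$. Hence by duality \eqref{comuDs3} reduces to the dual statement where the roles of $f$ and $g$ are partly swapped, and it is obtained by rerunning the kernel estimates but now using $D^s g\in L^p$ (which provides Hardy--Littlewood--Sobolev control on $g$) in place of the $L^\infty$ H\"older bound. Alternatively, both \eqref{comuDs2} and \eqref{comuDs3} can be obtained in one stroke by decomposing $gf=T_g f+T_f g+R(g,f)$ via Bony's paraproduct calculus; each piece of $D^s(gf)-gD^s f$ is then controlled by a standard Coifman--Meyer multilinear multiplier bound, and the two estimates correspond to placing the $D^s$ on $g$ and measuring the remaining function in the complementary norm.

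The main obstacle is establishing the $L^2$-bound underlying \eqref{comuDs2}: the $|x-y|^{-1}$ size of $K$ is borderline for any absolute-value integration, so genuine cancellation must be exploited, either through the principal-value structure with a $T(1)$-type verification, or through the Littlewood--Paley gain $2^{-ks}$ that makes the series in $k$ summable. Without this subtle cancellation step, a naive Schur- or Young-type estimate fails to close at the critical scaling.
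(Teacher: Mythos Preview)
The paper does not prove this proposition at all; it simply cites Li's paper \cite{dong} (``On Kato--Ponce and fractional Leibniz'') and moves on. So there is no in-paper argument to compare against, and your sketch already goes well beyond what the authors provide.

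Your outline for \eqref{comuDs2} is sound: the kernel representation is correct, the embedding $\|g\|_{\dot C^s}\lesssim\|D^s g\|_{L^\infty}$ for $s\in(0,1)$ is classical, and a Littlewood--Paley or $T(1)$ argument does yield the $L^2$ bound needed to launch the Calder\'on--Zygmund machinery. This is essentially how such estimates are handled in the literature you would be reproducing.

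Your treatment of \eqref{comuDs3}, however, has a gap. The anti-self-adjointness of $[D^s,g]$ only tells you that the adjoint on $L^{p'}$ is again $[D^s,g]$ (up to sign), which does not swap the roles of $f$ and $g$ in any useful way; duality here just reproduces the same estimate with $p$ replaced by $p'$. The sentence ``rerunning the kernel estimates but now using $D^s g\in L^p$'' is too vague to constitute an argument: once you bound $|f(y)|\le\|f\|_{L^\infty}$ inside the kernel you are left with $\int|g(x)-g(y)|\,|x-y|^{-1-s}\,dy$, which is not $D^s g(x)$ and is not obviously controlled by $\|D^s g\|_{L^p}$. Your alternative route via Bony's paraproduct decomposition and Coifman--Meyer is the correct and standard way to obtain both inequalities simultaneously; I would drop the duality heuristic and commit to that argument, which is in fact what Li carries out in the cited reference.
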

\begin{proof}
See \cite{dong} and reference therein.
\end{proof}

Next, we introduce equivalent definitions for fractional derivatives. Although we work in one spatial dimension, we will state the following results for any dimension $d\geq 1$. We denote by $L^{p}_{b}(\mathbb{R}^d)$ the fractional Sobolev space defined as $L^{p}_{b}(\mathbb{R}^d):=(1-\Delta)^{-\frac{b}{2}}L^{p}(\R^d)$. Such spaces can be characterized by the Stein derivative of order $b$.

\begin{theorem}\label{stein}
	Let $b\in (0,1)$ and $\frac{2d}{d+2b}<p<\infty.$ Then $f\in L^{p}_{b}(\R^{d})$ if and only if
	\begin{itemize}
		\item [a)] $f\in L^{p}(\R^{d}),$ 
		\item [b)]
		$\mathcal{D}^{b}f(x):={\displaystyle \left (
			\int_{\R^{d}}\frac{|f(x)-f(y)|^{2}}{|x-y|^{d+2b}}dy\right)^{\frac{1}{2}}} \in
		L^{p}(\R^{d}),$ with
		\begin{equation*}
		\|f\|_{b,p}:=\|J^{b}f\|_{p}\sim \|f\|_{p}+\|D^{b}f\|_{p}\sim \|f\|_{p}+\|\mathcal{D}^{b}f\|_{p}.
		\end{equation*}
  In particular, when $p=2$, $\|D^b f\|\sim \|\mathcal{D}^bf\|$.
	\end{itemize}
\end{theorem}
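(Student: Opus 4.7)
The plan is to decompose the chain of equivalences into two independent pieces. The first, $\|J^b f\|_{L^p}\sim \|f\|_{L^p}+\|D^b f\|_{L^p}$, is the easy half: the Fourier symbols $m_1(\xi)=\langle\xi\rangle^b/(1+|\xi|^b)$ and $m_2(\xi)=(1+|\xi|^b)/\langle\xi\rangle^b$ are smooth on $\R^d\setminus\{0\}$ and satisfy the Mikhlin--H\"ormander derivative bounds $|\partial^\alpha m_i(\xi)|\lesssim |\xi|^{-|\alpha|}$, hence define bounded multipliers on $L^p(\R^d)$ for every $1<p<\infty$, giving the equivalence with no restriction on $p$ beyond $p>1$.

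The substantive content is the Strichartz characterization $\|D^b f\|_{L^p}\sim \|\mathcal{D}^b f\|_{L^p}$. I would first establish the $p=2$ case exactly. Writing
\begin{equation*}
\|\mathcal{D}^b f\|_{L^2}^2 \;=\; \int_{\R^d}\!\!\int_{\R^d}\frac{|f(x)-f(x+h)|^2}{|h|^{d+2b}}\,dh\,dx,
\end{equation*}
swapping the order of integration and invoking Plancherel in $x$ gives, via $|(f(\cdot+h)-f)^\wedge(\xi)|^2=|e^{ih\cdot\xi}-1|^2|\widehat{f}(\xi)|^2$ together with the Gagliardo identity $\int_{\R^d}|e^{ih\cdot\xi}-1|^2|h|^{-d-2b}\,dh=c_{b,d}|\xi|^{2b}$, the sharp relation $\|\mathcal{D}^b f\|_{L^2}^2=c_{b,d}\|D^b f\|_{L^2}^2$.

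For general $p$, my plan is to realize $\mathcal{D}^b f$ as a Littlewood--Paley-type square function. Inserting a decomposition $f=\sum_j\Delta_j f$ and splitting the $h$-integration dyadically, one bounds $\mathcal{D}^b f(x)^2$ above and below by $\sum_j 2^{2jb}|\Delta_j f(x)|^2$ modulo tails controlled by the Hardy--Littlewood maximal function. The vector-valued Littlewood--Paley inequality then yields
\begin{equation*}
\|\mathcal{D}^b f\|_{L^p}\;\sim\;\Bigl\|\bigl(\textstyle\sum_j 2^{2jb}|\Delta_j f|^2\bigr)^{1/2}\Bigr\|_{L^p}\;\sim\;\|D^b f\|_{L^p}
\end{equation*}
for every $1<p<\infty$. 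The main obstacle is the restriction $p>\frac{2d}{d+2b}$: this is exactly what is needed to control the far-field contribution $\int_{|h|\geq 1}|f(x)-f(x+h)|^2|h|^{-d-2b}\,dh$ in $L^{p/2}$ by $\|f\|_{L^p}^2$ through Minkowski and Young, ensuring that $\mathcal{D}^b f$ belongs to $L^p$ whenever $f\in L^p_b$, without auxiliary integrability assumptions. Since this characterization is classical and due to Stein and Strichartz, in the actual write-up I would state the outline above and refer the reader to those sources for the full argument.
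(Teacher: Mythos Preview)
Your outline is correct and is in fact considerably more detailed than what the paper does: the paper's entire proof consists of the single line ``We refer to \cite{stein}.'' Since the paper treats this theorem as a black-box citation to Stein's original work and supplies no argument of its own, there is nothing substantive to compare against; your sketch of the Mikhlin--H\"ormander step, the Plancherel computation for $p=2$, and the Littlewood--Paley square-function argument for general $p$ is a standard and valid route to the result, and your closing remark that you would defer the full details to the original sources is exactly what the paper does from the outset.
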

\begin{proof}
	We refer to \cite{stein}.
\end{proof}

One of the advantages of using $\mathcal{D}^{b}$ is that it provides a suitable formula to obtain pointwise estimates for fractional derivatives. In addition, from  Fubini's theorem, we have the following product estimate 
\begin{equation}\label{Leib}
\|\mathcal{D}^{b}(fg)\|_{L^2(\R^d)} \leq \|f\mathcal{D}^{b}g\|_{L^2(\R^d)} + \|g\mathcal{D}^{b}f\|_{L^2(\R^d)}.
\end{equation}
As a particular case of the previous inequality, and the definition of the derivative $\mathcal{D}^b$, we have:

\begin{proposition}\label{Leibnitz}
Let $b\in (0,1)$ and $h$ be a measurable function on $\R^d$ such that $h,\nabla h\in L^{\infty}(\R^d)$. Then, for almost every $x\in \R^d$,
\begin{equation}\label{Lei}
\mathcal{D}^b h(x)\lesssim \|h\|_{L^{\infty}(\R^d)}+\|\nabla h\|_{L^\infty(\R^d)}.
\end{equation}
Moreover,
\begin{equation}\label{Leibh}
\|\mathcal{D}^{b}(h f)\|_{L^2(\R^d)} \leq \|\mathcal {D}^b h\|_{L^\infty(\R^d)} \|f\|_{L^2(\R^d)} + \|h\|_{L^\infty(\R^d)} \|\mathcal{D}^{b}f\|_{L^2(\R^d)}.
\end{equation}
\end{proposition}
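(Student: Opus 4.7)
The plan is to treat the two inequalities separately, since the second (the product bound \eqref{Leibh}) follows from the first (the pointwise bound \eqref{Lei}) combined with the Leibniz-type inequality \eqref{Leib} already stated just before the proposition.

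\textbf{Step 1: Pointwise bound \eqref{Lei}.} By definition,
\begin{equation*}
\mathcal{D}^{b}h(x)^{2}=\int_{\mathbb{R}^{d}}\frac{|h(x)-h(y)|^{2}}{|x-y|^{d+2b}}\,dy.
\end{equation*}
I would split the domain into $\{|x-y|<1\}$ and $\{|x-y|\geq 1\}$. On the far region, the trivial bound $|h(x)-h(y)|\leq 2\|h\|_{L^{\infty}}$ turns the integral into $\|h\|_{L^{\infty}}^{2}\int_{1}^{\infty}r^{-1-2b}\,dr$ after passing to polar coordinates; this converges because $b>0$. On the near region, the mean value theorem gives $|h(x)-h(y)|\leq \|\nabla h\|_{L^{\infty}}|x-y|$, so the integrand is bounded by $\|\nabla h\|_{L^{\infty}}^{2}|x-y|^{2-d-2b}$; after polar coordinates this becomes $\|\nabla h\|_{L^{\infty}}^{2}\int_{0}^{1}r^{1-2b}\,dr$, which converges because $b<1$. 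Adding the two pieces and taking square roots yields the desired pointwise estimate $\mathcal{D}^{b}h(x)\lesssim \|h\|_{L^{\infty}}+\|\nabla h\|_{L^{\infty}}$ uniformly in $x$.

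\textbf{Step 2: Product bound \eqref{Leibh}.} Starting from \eqref{Leib}, which was stated just above, I split
\begin{equation*}
\|\mathcal{D}^{b}(hf)\|_{L^{2}(\mathbb{R}^{d})}\leq \|f\,\mathcal{D}^{b}h\|_{L^{2}(\mathbb{R}^{d})}+\|h\,\mathcal{D}^{b}f\|_{L^{2}(\mathbb{R}^{d})}.
\end{equation*}
For the first term, pull the $L^{\infty}$ norm of $\mathcal{D}^{b}h$ (which is finite by Step~1) out of the $L^{2}$ norm in $f$, giving $\|\mathcal{D}^{b}h\|_{L^{\infty}}\|f\|_{L^{2}}$. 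For the second term, pull out $\|h\|_{L^{\infty}}$ and bound by $\|h\|_{L^{\infty}}\|\mathcal{D}^{b}f\|_{L^{2}}$. This is exactly \eqref{Leibh}.

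\textbf{Expected obstacle.} The only delicate point is checking convergence of the near-region integral in Step~1, which depends on $b<1$; one must be slightly careful when $b=\tfrac{1}{2}$ or close to it, but the exponent $1-2b>-1$ in any case, so integrability near the origin is preserved. Apart from this, the argument is a direct estimate; no commutator or Fourier-side computation is required, because the pointwise formula for $\mathcal{D}^{b}$ and Fubini (already invoked to obtain \eqref{Leib}) do all the work.
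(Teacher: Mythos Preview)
Your proof is correct and follows exactly the route the paper indicates: the paper does not give a detailed argument but simply remarks that the proposition is ``a particular case of the previous inequality [\eqref{Leib}], and the definition of the derivative $\mathcal{D}^b$,'' and your Steps~1 and~2 spell out precisely those two ingredients. One small comment: your ``expected obstacle'' about $b$ near $\tfrac{1}{2}$ is not actually an issue---the exponent $1-2b$ stays strictly above $-1$ for all $b\in(0,1)$, so no special care is needed at any interior point.
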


We will also use the following interpolation estimate.
\begin{lemma}\label{interx}
Let $\alpha,b>0.$ Assume that $J^{\alpha}f\in L^{2}(\R^d)$ and
$\langle x \rangle^b f=(1+x^2)^{\frac{b}{2}}f(x)\in L^{2}(\R^d).$ Then, for any
$\beta \in (0,1)$,
\begin{equation}\label{inter1x}
\|J^{\alpha \beta}(\langle x \rangle^{(1-\beta)b}f)\|_{L^2(\mathbb{R}^d)}\les \|\langle  x
\rangle^{b}f\|_{L^2(\mathbb{R}^d)}^{1-\beta}\|J^{\alpha}f\|_{L^2(\mathbb{R}^d)}^{\beta}.
\end{equation}
\end{lemma}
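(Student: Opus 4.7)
The plan is to establish \eqref{inter1x} by a complex interpolation argument of Stein type built on Hadamard's three--lines theorem. For $f\in\mathcal{S}(\mathbb{R}^d)$ (the general case then follows by density), I consider the analytic family of operators
\begin{equation*}
T_z h:=J^{\alpha z}\big(\langle x\rangle^{(1-z)b}\, h\big),\qquad 0\leq \mathrm{Re}(z)\leq 1,
\end{equation*}
where the complex exponents are defined via $\widehat{J^{\alpha z}g}(\xi)=\langle\xi\rangle^{\alpha z}\widehat{g}(\xi)$ and $\langle x\rangle^{(1-z)b}=e^{(1-z)b\log\langle x\rangle}$. Setting $A:=\|\langle x\rangle^b f\|$ and $B:=\|J^\alpha f\|$, and fixing an auxiliary $g\in\mathcal{S}(\mathbb{R}^d)$ with $\|g\|=1$, I introduce the auxiliary function
\begin{equation*}
F(z):=e^{(z-\beta)^2}A^{z-1}B^{-z}\langle T_z f,g\rangle,
\end{equation*}
which is continuous on the closed strip, holomorphic in its interior, and has Gaussian decay in $|\mathrm{Im}(z)|$, so the three--lines principle will be applicable.

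The next step is to estimate $F$ on the two boundary lines. On $\mathrm{Re}(z)=0$, the multiplier $\langle x\rangle^{(1-iy)b}$ has modulus $\langle x\rangle^b$ and $J^{\alpha iy}$ is an $L^2$-isometry, so Plancherel yields $\|T_{iy}f\|=A$ and therefore $|F(iy)|\le e^{\beta^2-y^2}$. On $\mathrm{Re}(z)=1$ I decompose
\begin{equation*}
T_{1+iy}f=J^{\alpha iy}\big(\langle x\rangle^{-iyb}J^\alpha f\big)+J^{\alpha iy}\big[J^\alpha,\langle x\rangle^{-iyb}\big]f.
\end{equation*}
The first summand has $L^2$-norm exactly $B$, because both $J^{\alpha iy}$ and multiplication by $\langle x\rangle^{-iyb}$ are $L^2$-isometries. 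The commutator is handled via \eqref{leibfract} and Proposition \ref{comuDs1}, together with the observation that $\langle x\rangle^{-iyb}$ is smooth with $\|\partial^{\gamma}\langle x\rangle^{-iyb}\|_{L^\infty}\lesssim (1+|y|)^{|\gamma|}$ for every multi-index $\gamma$; this gives an estimate of the form $\|[J^\alpha,\langle x\rangle^{-iyb}]f\|\lesssim (1+|y|)^N\|J^\alpha f\|$ with $N=N(\alpha,d)$, whence $|F(1+iy)|\lesssim (1+|y|)^N e^{(1-\beta)^2-y^2}$.

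The Phragm\'en--Lindel\"of principle applied to $F$ on the closed strip then gives $|F(\beta)|\lesssim 1$ uniformly in $g$, and rearranging this inequality produces
\begin{equation*}
|\langle T_\beta f,g\rangle|\lesssim A^{1-\beta}B^{\beta}=\|\langle x\rangle^b f\|^{1-\beta}\|J^\alpha f\|^{\beta}.
\end{equation*}
Taking the supremum over $g\in\mathcal{S}$ with $\|g\|=1$ and extending by density concludes the proof. I expect the main technical hurdle to be the boundary bound on $\mathrm{Re}(z)=1$, namely the commutator estimate $\|[J^\alpha,\langle x\rangle^{-iyb}]f\|\lesssim(1+|y|)^N\|J^\alpha f\|$ for the full range $\alpha>0$: for $\alpha\in(0,1)$ Proposition \ref{comuDs1} applies directly, while for larger $\alpha$ one either iterates after splitting $J^\alpha=J^{\alpha-\lfloor\alpha\rfloor}(1-\Delta)^{\lfloor\alpha\rfloor/2}$, or appeals to the standard $L^2$-calculus for zero-order pseudodifferential symbols depending parametrically on $y$.
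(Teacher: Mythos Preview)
Your proposal is correct and follows the standard Stein complex interpolation argument. The paper does not give its own proof of this lemma; it simply cites \cite[Lemma~4]{NahasPonce}, whose proof is precisely the three--lines construction you outline (analytic family $z\mapsto J^{\alpha z}\langle x\rangle^{(1-z)b}$, isometry bounds on $\mathrm{Re}\,z=0$, and a commutator estimate on $\mathrm{Re}\,z=1$). One small remark: the tools you invoke for the commutator, namely \eqref{leibfract} and Proposition~\ref{comuDs1}, are stated for $D^s$ rather than $J^\alpha$ and only cover $s\in(0,1)$, so they do not literally apply; the clean route is the one you mention at the end, treating $\langle x\rangle^{-iyb}$ as an $S^0_{1,0}$ symbol with seminorms growing polynomially in $y$ and using the pseudodifferential calculus (or the Kato--Ponce commutator $\|[J^\alpha,g]h\|\lesssim\|\nabla g\|_{L^\infty}\|J^{\alpha-1}h\|$) to obtain $\|[J^\alpha,\langle x\rangle^{-iyb}]f\|\lesssim(1+|y|)^N\|J^{\alpha}f\|$ directly for all $\alpha>0$.
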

\begin{proof}
We refer to \cite[Lemma 4]{NahasPonce}. 
\end{proof}


\subsection{Weighted and fractional derivative estimates}

In this part,  we deduce different estimates using the fractional derivative $\mathcal{D}^{b}$ introduced earlier. The goal is to deduce estimates in weighted spaces for the group $\{U(t)\}$. For this, we first obtain fractional derivative bounds for the multiplier associated with the family $\{U(t)\}$.

\begin{proposition}\label{pontualn}
Let $a\in (-\frac52,-2)$ and $b\in (0,1)$. Then for any $t\in \R$ it follows that
\begin{equation*}
\mathcal D^b(e^{itx |x|^{1+a}})\les \langle t \rangle |x|^{(1+a)b},
\end{equation*}
for each  $x\neq 0$. The implicit constant above is independent of $x$ and $t$. 
\end{proposition}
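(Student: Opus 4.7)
The plan is to exploit the homogeneity of $\phi(x):=x|x|^{1+a}$, which satisfies $\phi(\lambda x)=\lambda^{2+a}\phi(x)$ for $\lambda>0$. Writing $F_{t}(x):=e^{it\phi(x)}$, this yields $F_{t}(\lambda x)=F_{t\lambda^{2+a}}(x)$, and after substituting $y=\lambda z$ in the defining integral of $\mathcal D^{b}$ one obtains the scaling identity
\begin{equation*}
\mathcal D^{b}F_{t}(\lambda)=\lambda^{-b}\,\mathcal D^{b}F_{t\lambda^{2+a}}(1)\qquad(\lambda>0).
\end{equation*}
The case $x<0$ follows from the evenness of $\mathcal D^{b}F_{t}$ in $x$ (since $F_{t}(-x)=\overline{F_{t}(x)}$), so it suffices to prove the two pointwise bounds $\mathcal D^{b}F_{s}(1)\lesssim|s|$ for $|s|\le 1$ and $\mathcal D^{b}F_{s}(1)\lesssim|s|^{b}$ for $|s|\ge 1$.

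For the small-$|s|$ bound I would use $|F_{s}(1)-F_{s}(y)|\le|s||\phi(1)-\phi(y)|$ to reduce to verifying that $\int_{\mathbb R}|\phi(1)-\phi(y)|^{2}|1-y|^{-1-2b}\,dy<\infty$: integrability near $y=1$ follows from the smoothness of $\phi$ there (integrand $\sim|y-1|^{1-2b}$, $b<1$); at infinity $|\phi(y)|=O(|y|^{2+a})\to 0$, so the tail is integrable; and near $y=0$, where $|\phi(y)|\sim|y|^{2+a}$ diverges, square-integrability holds precisely when $2(2+a)>-1$, i.e., $a>-\tfrac{5}{2}$, which is the assumed range. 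For the large-$|s|$ bound I would split the defining integral at $|y-1|=\tfrac12$: on the far region $|F_{s}(1)-F_{s}(y)|\le 2$ and $|1-y|^{-1-2b}$ is integrable, giving an $O(1)$ contribution; on the near region $|\phi'(z)|=|2+a||z|^{1+a}$ is comparable to $1$ for $z$ near $1$, so using $|F_{s}(1)-F_{s}(y)|\le\min(2,C|s||y-1|)$ and splitting at the critical radius $|y-1|=c/|s|$ produces a bound of order $|s|^{2b}$.

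Combining both base estimates with the scaling identity gives
\begin{equation*}
\mathcal D^{b}F_{t}(\lambda)\lesssim\min\bigl(|t|\lambda^{(2+a)-b},\,|t|^{b}\lambda^{(1+a)b}\bigr),
\end{equation*}
the first quantity being smaller precisely when $|t|\lambda^{2+a}\le 1$. A short comparison of exponents shows both are controlled by $\langle t\rangle\lambda^{(1+a)b}$: in the regime $|t|\lambda^{2+a}\le 1$ one has $|t|\le\lambda^{|2+a|}$, which, combined with $|2+a|(1-b)\le|2+a|$, yields $|t|\lambda^{(2+a)-b}\le\lambda^{-b}\le\langle t\rangle\lambda^{(1+a)b}$; in the complementary regime the factor $|t|^{b}$ is directly absorbed by $\langle t\rangle$.

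The principal technical obstacle is the base case near $y=0$: the phase $\phi$ is singular there, and it is precisely the threshold $a>-\tfrac{5}{2}$ that forces $|\phi(y)|^{2}=|y|^{2(2+a)}$ to be locally integrable with respect to Lebesgue measure on a neighbourhood of the origin, thereby allowing the Lipschitz-type estimate in the $|s|\le 1$ case to survive. Beyond that, the remainder of the argument is a careful but routine optimization and case analysis between the near/far and small/large-$|s|$ regimes.
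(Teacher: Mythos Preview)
Your scaling-reduction approach is correct and genuinely different from the paper's. The paper works directly at a general point $x$: it splits the defining integral at the scale $|y-x|=|x|^{-1-a}$, uses the trivial bound $|e^{i\alpha}-e^{i\beta}|\le 2$ on the far piece, and on the near piece splits further into $|y|>|x|$ and $|y|\le|x|$, invoking mean-value estimates such as $\bigl||x|^{1+a}-|y|^{1+a}\bigr|\lesssim|x|^a|y-x|$ (and a dual version). Your argument instead exploits the homogeneity $\phi(\lambda x)=\lambda^{2+a}\phi(x)$ to reduce everything to the single base point $x=1$, trading the paper's spatial decomposition for a dichotomy on the effective time $s=t\lambda^{2+a}$. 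This is more structural and arguably cleaner; the paper's route is more hands-on but avoids having to recombine regimes at the end.

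One genuine slip to fix: in your final combination you write $|t|\lambda^{(2+a)-b}\le\lambda^{-b}\le\langle t\rangle\lambda^{(1+a)b}$. The first inequality is fine, but the second fails when $\lambda>1$ and $t$ is small. The correct way to close the $|s|\le 1$ regime is the algebraic identity
\[
|t|\,\lambda^{(2+a)-b}=\bigl(|t|\lambda^{2+a}\bigr)^{1-b}\,|t|^{b}\,\lambda^{(1+a)b},
\]
so that $(|t|\lambda^{2+a})^{1-b}\le 1$ and $|t|^{b}\le\langle t\rangle$ give the desired bound directly. With this correction your proof is complete.
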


\begin{proof}

Setting $A=\{y\in \R: |y-x|\geq  |x|^{-1-a}\}$, we first divide our estimates as follows
\begin{equation*}
\begin{split}
\left(\mathcal D^b(e^{itx |x|^{1+a}})\right)^2=\int \dfrac{\big|e^{itx |x|^{1+a}}-e^{ity |y|^{1+a}}\big|^2}{|y-x|^{1+2b}}dy=\int_{A}(\cdots)+\int_{\R\setminus A}(\cdots) =:I_1+I_2.
\end{split}
\end{equation*}
A change of variable and the definition of $A$ yield
\begin{equation*}
I_1\les \int_{|x|^{-1-a}}^\infty r^{-1-2b}dr\sim |x|^{2(1+a) b}.
\end{equation*}
To deal with $I_2$, we introduce the sets $B=\{y\in \R\setminus A: \;|x|<|y|\}$ and $C=\{y\in \R\setminus A:\;|y|\leq |x|\}$ to write
\begin{equation*}
\begin{split}
I_2&=\int_{\R\setminus A}\dfrac{\big|1-e^{it(x |x|^{1+a}-y |y|^{1+a})}\big|^2}{|y-x|^{1+2b}}dy=\int_B (\cdots)+\int_C (\cdots)=:I_3+I_4.
\end{split}
\end{equation*}
Now, since 
\begin{equation}\label{estimaxy}
|x |x|^{1+a}-y |y|^{1+a}|\leq |y-x||x|^{1+a}+|y|\big||x|^{1+a}-|y|^{1+a}\big|,
\end{equation}
we get
\begin{equation*}
\begin{split}
I_3
&\leq |t|^2\int_{B} \dfrac{\big|x |x|^{1+a}-y |y|^{1+a}\big|^2}{|y-x|^{1+2b}}dy\\
&\leq |t|^2 \Bigg(|x|^{2(1+a)}\int_{B} |y-x|^{1-2b}dy+\int_{B} \dfrac{|y|^2\big||x|^{1+a}-|y|^{1+a}\big|^2}{|y-x|^{1+2b}}dy\Bigg)=:I_{3,1}+I_{3,2}.
\end{split}
\end{equation*}
Using that $B\subset \mathbb{R}\setminus A$, the first term on the right-hand side of the inequality above can be estimated in the following manner
\begin{equation}\label{I31}
\begin{split}
I_{3,1} &\leq |t|^2 |x|^{2(1+a)} \int_{|y-x|< |x|^{-1-a}}|y-x|^{1-2b}dy \\
& \les |t|^2 |x|^{2(1+a)}(|x|^{-1-a})^{2-2b}\\
&\les |t|^2 |x|^{2(1+a)b}.
\end{split}
\end{equation}
To deal with $I_{3,2}$, using that $0<|x|<|y|$ for all $y\in B$, we can apply the mean value theorem to find
\begin{equation}\label{meanvalue}
||x|^{1+a}-|y|^{1+a}|\lesssim |x|^{a}|y-x|,
\end{equation}
this inequality together with the fact that $|y|^{2(1+a)}\leq |x|^{2(1+a)}$ allow us to infer
\begin{equation}\label{intB}
\begin{split}
I_{3,2}&\lesssim  |t|^2\int_{B} |y-x|^{1-2b}\big||x|^{1+a}-|y|^{1+a}\big|^2 dy+|t|^2\int_{B} \dfrac{|x|^2\big||x|^{1+a}-|y|^{1+a}\big|^2}{|y-x|^{1+2b}}dy\\
&\les |t|^2\int_B |y-x|^{1-2b}\big(|x|^{2(1+a)}+|y|^{2(1+a)}\big)dy+|t|^2|x|^{2(1+a)}\int_B |y-x|^{1-2b}dy\\
&\les |t|^2 |x|^{2(1+a)}\int_{_{|y-x|< |x|^{-1-a}}}|y-x|^{1-2b}dy\\
&\les |t|^2|x|^{2(1+a)b}.
\end{split}
\end{equation}
Next, we deal with $I_4$. From \eqref{estimaxy}, we have
\begin{equation*}
\begin{split}
I_4
&\leq |t|^2\int_{C} \dfrac{\big|x |x|^{1+a}-y |y|^{1+a}\big|^2}{|y-x|^{1+2b}}dy\\
&\leq |t|^2 \Bigg(|x|^{2(1+a)}\int_{C} |y-x|^{1-2b}dy+\int_{C} \dfrac{|y|^2\big||x|^{1+a}-|y|^{1+a}\big|^2}{|y-x|^{1+2b}}dy\Bigg)=:I_{4,1}+I_{4,2}.
\end{split}
\end{equation*}
In a similar way to \eqref{I31}, it follows that $I_{4,1}\les |t|^2 |x|^{2(1+a)b}$. On the other hand, given that $a<-2$, we use the mean value theorem to deduce
\begin{equation}\label{Cxy}
||x|^{1+a}-|y|^{1+a}|=|y|^{1+a}\big||x|^{-1-a}-|y|^{-1-a}\big||x|^{1+a}\les |y|^{1+a}|x|^{-1}|y-x|,
\end{equation}
for all $y\in C$. Hence, we get
\begin{equation*}
\begin{split}
I_{4,2}&\les |t|^2\int_C |y|^{2(2+a)}|x|^{-2}|y-x|^{1-2b}dy\\
& \les |t|^2\int_{C\cap\{\frac{|x|}{2}\leq |y|\}} (\cdots)\, dy+|t|^2\int_{C\cap\{|y|\leq \frac{|x|}{2}\}} (\cdots)\, dy\\
&=:I_{4,2,1}+I_{4,2,2}.
\end{split}
\end{equation*}
On the support of the integral defining  $I_{4,2,1}$, we have that $|y|\sim |x|$ and $|x-y|< |x|^{-1-a}$, so
\begin{equation*}
\begin{split}
I_{4,2,1}&\les |t|^2|x|^{2(1+a)}\int_{|x-y|< |x|^{-1-a}} |y-x|^{1-2b}dy\\
& \les |t|^2|x|^{2(1+a)b}.
\end{split}
\end{equation*}
Now, on the support of the integral defining  $I_{4,2,2}$,  we have that $\frac{|x|}{2}\leq |y-x|\leq \min\{\frac{3}{2}|x|,|x|^{-1-a}\}$, which forces us to have $|x|\leq 2|x|^{-1-a}$, i.e., $2^{\frac{1}{2+a}}\leq |x|$. Thus, using that $|y-x|\sim |x|$, we get
\begin{equation*}
\begin{split}
I_{4,2,2}&\les |t|^2|x|^{-1-2b}\int_{|y|\leq |x|} |y|^{2(2+a)}dy\\
& \les |t|^2|x|^{2(2+a)-2b}\\
&\les |t|^2|x|^{2(1+a)b},
\end{split}
\end{equation*}
where the integrability of $|y|^{2(2+a)}$ follows form the fact that $a>-\frac{5}{2}$, and in the last line we have used that $a<-2$ implies $2(2+a)-2b\leq 2(1+a)b$, and that $2^{\frac{1}{2+a}}\leq |x|$. This finishes the proof.
\end{proof}

Motivated by the regions used in the proof of Proposition \ref{pontualn}, we can prove the following result, which will be useful to study the cases $0<b<1$, where $U(t)\varphi \notin L^2(|x|^{2b}\, dx)$.

\begin{proposition}\label{nonintegrabprop}
    Assume $a\in (-\frac{5}{2},-2)$ and let $\varphi\in C^{\infty}_0(\mathbb{R})$ be such that $\varphi\equiv c$ for some constant $c\neq 0$ in some neighborhood of the origin. Consider $\frac{-1}{(1+a)}<p<\infty$, and $\frac{-1}{p(1+a)}\leq b<1$. Then
\begin{equation*}
\mathcal{D}^{b}\big(e^{itx|x|^{1+a}}\varphi \big) \notin L^p(\mathbb{R}), 
\end{equation*}
for any $ t\in \mathbb{R}\setminus\{0\}$.
\end{proposition}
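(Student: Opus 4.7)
The plan is to establish a pointwise lower bound
\begin{equation*}
\mathcal{D}^{b}\bigl(e^{itx|x|^{1+a}}\varphi\bigr)(x)\,\gtrsim\,|x|^{(1+a)b}
\end{equation*}
valid for $|x|$ sufficiently small; this is the exact matching lower bound to Proposition \ref{pontualn}. Because $(1+a)b<0$, the integral $\int_{|x|<x_{0}}|x|^{p(1+a)b}\,dx$ diverges precisely when $p(1+a)b\leq -1$, which rearranges to the hypothesis $b\geq -1/(p(1+a))$. Thus such a pointwise lower bound immediately yields the non-integrability statement. By the symmetries $x\mapsto -x$ and $t\mapsto -t$, one may reduce to $t>0$ and $x\in(0,x_{0})$ with $x_{0}$ small.

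To prove the lower bound, fix $\delta>0$ such that $\varphi\equiv c$ on $[-2\delta,2\delta]$, set $\phi(y):=ty|y|^{1+a}$, $F(y):=e^{i\phi(y)}\varphi(y)$, and $r:=x^{-1-a}$. Because $a<-2$ gives $-1-a>1$, one has $r\ll x$ for small $x>0$. Choose a constant $C=C(t,a)>1$ with $(C-1)|t(2+a)|\geq 4\pi$; for $x$ small the interval $[x+r,x+Cr]$ lies in $(0,2\delta)$ and $\varphi$ is constantly $c$ there. Restricting the defining integral of $\mathcal{D}^{b}F(x)^{2}$ to $y=x+u$, $u\in[r,Cr]$, gives
\begin{equation*}
\mathcal{D}^{b}F(x)^{2}\geq |c|^{2}\int_{r}^{Cr}\frac{2-2\cos(\phi(x+u)-\phi(x))}{u^{1+2b}}\,du.
\end{equation*}
Taylor's formula yields $\phi(x+u)-\phi(x)=t(2+a)x^{1+a}u+E(x,u)$ with $|E(x,u)|\lesssim |t|u^{2}x^{a}\lesssim |t|x^{-2-a}$, which tends to $0$ as $x\to 0^{+}$ since $-2-a>0$. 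The linear term $t(2+a)x^{1+a}u$ sweeps an interval of length $(C-1)|t(2+a)|\geq 4\pi$ as $u$ runs over $[r,Cr]$, so the full phase covers at least one period of the cosine once $x$ is small enough that $E$ is negligible. A change of variable $\theta=\phi(x+u)-\phi(x)$, whose Jacobian $\phi'(x+u)=t(2+a)(x+u)^{1+a}$ differs from $t(2+a)x^{1+a}$ only by a factor $(1+u/x)^{1+a}$ uniformly close to $1$ on $[r,Cr]$ (since $u/x\leq Cr/x\to 0$), then gives $\int_{r}^{Cr}(2-2\cos(\phi(x+u)-\phi(x)))\,du\gtrsim r$. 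Bounding $u^{1+2b}\leq (Cr)^{1+2b}$ in the denominator produces $\mathcal{D}^{b}F(x)^{2}\gtrsim r^{-2b}=x^{2(1+a)b}$, which is the claimed lower bound.

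The main obstacle is controlling the Taylor remainder $E(x,u)$ so that the oscillation count in the phase is not destroyed. The restriction $a<-2$ is used twice and essentially here: it yields $-1-a>1$, which makes $r\ll x$ and keeps the integration region inside the plateau of $\varphi$; and it yields $-2-a>0$, which makes $E\to 0$ uniformly as $x\to 0$. Once the pointwise lower bound is in hand, taking $p$-th powers and integrating over $(0,x_{0})$ gives $\int_{0}^{x_{0}}|\mathcal{D}^{b}F(x)|^{p}\,dx\gtrsim\int_{0}^{x_{0}}x^{p(1+a)b}\,dx=+\infty$ precisely under the hypothesis $b\geq -1/(p(1+a))$, which completes the argument.
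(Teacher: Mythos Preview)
Your argument is correct and reaches the same pointwise lower bound $\mathcal{D}^{b}\bigl(e^{itx|x|^{1+a}}\varphi\bigr)(x)\gtrsim |x|^{(1+a)b}$ for small $|x|$ that the paper obtains, so the non-integrability follows identically. The route, however, is different. The paper restricts the integral in the definition of $\mathcal{D}^b$ to the region $\{|x-y|\le c_t\,|x|^{-1-a}\}$ with $c_t$ \emph{small}, so that the mean value theorem forces $|t(x|x|^{1+a}-y|y|^{1+a})|\le 1/2$; one then simply uses $|\sin\theta|\gtrsim|\theta|$ and integrates $|x-y|^{1-2b}$ directly. You instead restrict to $u=|x-y|\in[r,Cr]$ with $r=|x|^{-1-a}$ and $C$ \emph{large}, arranging the phase to sweep a full period, and extract the lower bound via $\int(2-2\cos\theta)\,d\theta\gtrsim 1$ after a change of variable. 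Your approach requires the extra Taylor-remainder control $|E(x,u)|\lesssim |t|x^{-2-a}\to 0$ and the Jacobian comparison $(1+u/x)^{1+a}\to 1$, both of which you handle correctly using $a<-2$; the paper's approach avoids these steps and is a bit more direct. Either way the crucial scaling $r=|x|^{-1-a}\ll|x|$ (hence the integration stays inside the plateau of $\varphi$) is what makes the argument go through, and you identify this correctly.
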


\begin{proof} Without loss of generality, we will assume that $\varphi(x)=1$ on $|x|\leq 1$. Given $x\in (0,\frac{1}{2})$, we consider $\mathcal{B}_1(x)=\{y>0:|x-y|\leq \big(\min\{(100|t|)^{-1},\frac{1}{2}\}\big)|x|^{-1-a}\}$.  The definition of the operator $\mathcal{D}^{b}$ implies that for all $x\in (0,\frac{1}{2})$,
\begin{equation*}
\begin{aligned}
\Big(\mathcal{D}^{b}\big(e^{itx|x|^{1+a}}\varphi \big) \Big)^2(x)\geq \int_{\mathcal{B}_1(x)} \frac{|\sin\big(t(x|x|^{1+a}-y|y|^{1+a})\big)|^2}{|x-y|^{1+2b}}\, dy, 
\end{aligned}    
\end{equation*}
where we have used that $\varphi(x)=\varphi(y)=1$ when $x\in (0,\frac{1}{2})$, and $y\in \mathcal{B}_1(x)$. To complete the estimate of the previous inequality, let us obtain first some preliminary results. 

Given that $|x|^{-1-a}\leq |x|$ provided that $x\in (0,\frac{1}{2})$, and $a<-2$, we have for all $y\in \mathcal{B}_1(x)$ that
\begin{equation}\label{nonintegeq1}
    |y|\geq |x|-|x-y|\geq |x|-\frac{1}{2}|x|^{-1-a}\geq \frac{1}{2}|x|,
\end{equation}
and since $|y|\leq 2|x|$,  it follows $|x|\sim |y|$. Now, setting $F(x)=x|x|^{1+a}$, the mean value theorem assures the existence of some $z\in [|y|,|x|]$ such that 
\begin{equation*}
\begin{aligned}
|t(x|x|^{1+a}-y|y|^{1+a})|=&|t||F'(z)||x-y|\\
=&|2+a||t||z|^{1+a}|x-y|.
\end{aligned}    
\end{equation*}
Thus, it follows from \eqref{nonintegeq1} and the definition of $\mathcal{B}_1(x)$ that
\begin{equation}\label{nonintegeq2}
\begin{aligned}
 |t(x|x|^{1+a}-y|y|^{1+a})|\leq &  |2+a||t||y|^{1+a}|x-y|\\
\leq & 2^{-(1+a)}|2+a||t||x|^{1+a}\big((100|t|)^{-1}|x|^{-1-a}\big) \\
\leq& \frac{1}{2}.
\end{aligned}
\end{equation}
Additionally, one has
\begin{equation}\label{nonintegeq3}
    |t(x|x|^{1+a}-y|y|^{1+a})||\gtrsim |t||x|^{1+a}|x-y|,
\end{equation}
for all $y\in \mathcal{B}_1(x)$. It follows from \eqref{nonintegeq2} and \eqref{nonintegeq3} that
\begin{equation*}
\begin{aligned}
|\sin\big(t(x|x|^{1+a}-y|y|^{1+a})\big)|\gtrsim &  |t(x|x|^{1+a}-y|y|^{1+a})|  \\
\gtrsim & |t||x|^{1+a}|x-y|,
\end{aligned}
\end{equation*}
where the implicit constant is independent of $x\in (0,\frac{1}{2})$, and $y\in \mathcal{B}_1(x)$. Thus, going back to the first estimate, we get
\begin{equation*}
\begin{aligned}
\Big(\mathcal{D}^{b}\big(e^{itx|x|^{1+a}}\varphi \big) \Big)^2(x)\gtrsim & |t|^2|x|^{2(1+a)}\int_{\mathcal{B}_1(x)}|x-y|^{1-2b}\, dy\\
\gtrsim_{t}& |x|^{2(1+a)}(|x|^{-1-a})^{2-2b}\\
\sim_t & |x|^{2(1+a)b}.
\end{aligned}    
\end{equation*}
Since $b\geq \frac{-1}{p(1+a)}$ implies that $|x|^{(1+a)b} \notin L^p((0,\frac{1}{2}))$, the desired result is a consequence of the previous inequality.

\end{proof}

As a consequence of Proposition \ref{pontualn}, we deduce weighted estimates in $L^2$-Sobolev spaces for solutions of the linear equation $\partial_t u-\partial_x D^{a+1} u=0$.

\begin{lemma}\label{Dthetau}
 Let $a\in (-\frac52,-2)$ and $\theta \in (0,1]$. Then for all $t\in \mathbb{R}$,
\begin{equation*}
\||x|^\theta U(t)f\|\les \Big(\langle t \rangle\|D^{(1+a)\theta}f\|+\|\lanx^\theta f\|\Big),
\end{equation*}
where $U(t)$ is the linear group defined in \eqref{group}. Moreover, if $1<\theta\leq 2$, we have
\begin{equation*}
\begin{aligned}
\||x|^\theta U(t)f\|\les & \langle t\rangle^{2}\|D^{(1+a)\theta}f\|+\langle t\rangle\big\||x |^{\theta-1} D^{1+a}f\|\\
&+\langle t\rangle\big\|D^{(1+a)(\theta-1)}(xf)\|+\|\langle x \rangle^{\theta}f\|.
\end{aligned}    
\end{equation*}

\end{lemma}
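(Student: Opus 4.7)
The proof naturally splits into two ranges of $\theta$. For $\theta\in(0,1)$, I first apply Plancherel in the spatial variable to write
\begin{equation*}
\||x|^\theta U(t)f\|_{L^2_x}=\|D^\theta_\xi(e^{it\xi|\xi|^{1+a}}\widehat{f}(\xi))\|_{L^2_\xi},
\end{equation*}
and then invoke Theorem \ref{stein} with $p=2$ to replace $D^\theta_\xi$ by the Stein derivative $\mathcal{D}^\theta_\xi$ up to equivalent norms. The Leibniz-type inequality \eqref{Leib}, together with $|e^{it\xi|\xi|^{1+a}}|=1$, gives
\begin{equation*}
\|\mathcal{D}^\theta_\xi(e^{it\xi|\xi|^{1+a}}\widehat{f})\|_{L^2}\leq \|\mathcal{D}^\theta_\xi \widehat{f}\|_{L^2}+\|\widehat{f}\,\mathcal{D}^\theta_\xi(e^{it\xi|\xi|^{1+a}})\|_{L^2}.
\end{equation*}
The first summand returns (up to constants) to $\||x|^\theta f\|_{L^2}\leq\|\langle x\rangle^\theta f\|_{L^2}$. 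For the second, I invoke the pointwise bound $\mathcal{D}^\theta_\xi(e^{it\xi|\xi|^{1+a}})\lesssim\langle t\rangle|\xi|^{(1+a)\theta}$ from Proposition \ref{pontualn}, which yields the contribution $\langle t\rangle\|D^{(1+a)\theta}f\|_{L^2}$. The endpoint $\theta=1$ is handled directly: differentiating the Fourier multiplier in $\xi$ produces the conjugation identity
\begin{equation*}
xU(t)f=U(t)(xf)-t(2+a)U(t)D^{1+a}f,
\end{equation*}
and the estimate follows from the triangle inequality together with $\|xf\|\leq\|\langle x\rangle f\|$.

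\textbf{The range $1<\theta\leq 2$.} Here I combine the two ingredients above. Writing $|x|^\theta=|x|^{\theta-1}|x|$ and applying the conjugation identity inside the norm gives
\begin{equation*}
\||x|^\theta U(t)f\|\leq \||x|^{\theta-1}U(t)(xf)\|+|t|\,|2+a|\,\||x|^{\theta-1}U(t)D^{1+a}f\|.
\end{equation*}
Since $\theta-1\in(0,1]$, the case already treated applies to each summand. The first produces $\langle t\rangle\|D^{(1+a)(\theta-1)}(xf)\|+\|\langle x\rangle^{\theta-1}xf\|$, and the bound $|\langle x\rangle^{\theta-1}x|\leq\langle x\rangle^\theta$ absorbs the weighted piece into $\|\langle x\rangle^\theta f\|$. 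The second yields, after using $D^{(1+a)(\theta-1)}D^{1+a}=D^{(1+a)\theta}$ and $\langle x\rangle^{\theta-1}\lesssim 1+|x|^{\theta-1}$, the contribution $\langle t\rangle\|D^{(1+a)\theta}f\|+\|D^{1+a}f\|+\||x|^{\theta-1}D^{1+a}f\|$. Multiplying by $|t|$ then delivers the desired $\langle t\rangle^2\|D^{(1+a)\theta}f\|$ and $\langle t\rangle\||x|^{\theta-1}D^{1+a}f\|$ terms.

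\textbf{Main obstacle.} The remaining stray term $\langle t\rangle\|D^{1+a}f\|$ is the delicate point, since $1+a\in(-3/2,-1)$ makes $\|D^{1+a}f\|$ a priori infinite for generic $L^2$ functions. I dispatch it by a low/high frequency split on the Fourier side: for $|\xi|\leq 1$ the inequality $|\xi|^{1+a}\leq|\xi|^{(1+a)\theta}$ holds because $1+a<0$ and $\theta\geq 1$, so this piece is controlled by $\|D^{(1+a)\theta}f\|$; for $|\xi|\geq 1$ one has $|\xi|^{1+a}\leq 1$ and Plancherel bounds the remaining part by $\|f\|\leq\|\langle x\rangle^\theta f\|$. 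Thus the absorption relies crucially on the joint constraints $\theta\geq 1$ and $1+a<0$, and this is precisely where the negative dispersion plays a role in enabling the desired weighted estimate.
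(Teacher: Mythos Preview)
Your proof is correct and follows essentially the same route as the paper's: Plancherel plus the Stein-derivative Leibniz rule \eqref{Leib} and Proposition \ref{pontualn} for $0<\theta<1$, the conjugation identity $xU(t)f=U(t)(xf)-t(2+a)U(t)D^{1+a}f$ for $\theta=1$, and then iterating these two for $1<\theta\le 2$. In fact you are slightly more careful than the paper: the paper's displayed bound ends with $\langle t\rangle\|\langle x\rangle^{\theta-1}D^{1+a}f\|$ rather than $\langle t\rangle\||x|^{\theta-1}D^{1+a}f\|$, leaving the absorption of the stray $\|D^{1+a}f\|$ piece implicit, whereas your low/high frequency split makes this step explicit and correct.
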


\begin{proof}
When $0<\theta < 1$, changing to the frequency domain, the proof follows from \eqref{Leib} and Proposition \ref{pontualn}. The case $\theta=1$ follows from the fact that the weight $x$ in frequency domain transfers to a local derivative $\frac{\partial}{\partial \xi}$.  Now, when $\theta>1$, we write $\theta=1+\theta_1$. We observe
\begin{equation*}
\begin{aligned}     
\frac{\partial}{\partial \xi}\big(e^{it\xi|\xi|^{1+a}} \widehat{f}\big)=e^{it\xi|\xi|^{1+a}}\big((2+a)it|\xi|^{1+a}\widehat{f}+\frac{\partial}{\partial \xi}\widehat{f}\big).
\end{aligned}   
\end{equation*}
Using the previous identity and the case $0<\theta_1\leq 1$ above, we deduce
\begin{equation*}
\begin{aligned}
\||x|^\theta U(t)f\|\leq & \||x|^{\theta_1}U(t)\big((2+a)|t| D^{1+a}f-xf \big)\|  \\
\les & \langle t\rangle^{2}\|D^{(1+a)(1+\theta_1)}f\|+\langle t\rangle\big\|\langle x \rangle^{\theta_1}D^{1+a}f\|\\
&+\langle t\rangle\big\|D^{(1+a)\theta_1}(xf)\|+\|\langle x \rangle^{(1+\theta_1)}f\|.
\end{aligned}    
\end{equation*}
The proof is thus completed.
\end{proof}

We also consider the following semigroup  $\{U_{\mu}(t)\}$ associated with the initial-value problem  $\partial_t u-\partial_x D^{a+1}u=\mu\partial_x^2u$, $u(0)=\phi$, which is defined via Fourier transform as
\begin{equation}\label{musemi}
(U_\mu (t)\phi)^{\wedge}(\xi)=e^{it\xi |\xi|^{1+a}-\mu t |\xi|^2}\widehat{\phi} (\xi), \quad t\in [0,\infty), \quad \mu>0.
\end{equation}
The following result will be useful in Section \ref{localwellp}, i.e., in the proof of Theorem \ref{lwpw}.

\begin{corollary}\label{Dmu}
  Let $a\in (-\frac52,-2)$, and $\theta, \mu \in (0,1)$. Then  for each $t\in \mathbb{R}$,
\begin{equation*}
\||x|^\theta U_\mu(t)f\|\les \langle t \rangle\Big(\|D^{(1+a)\theta}f\|+\|\lanx^\theta f\|\Big),
\end{equation*}
where $U_\mu(t)$ is defined in \eqref{musemi}.  Moreover, if $1<\theta\leq 2$, we have
\begin{equation*}
\begin{aligned}
\||x|^\theta U_{\mu}(t)f\|\les & \langle t\rangle^{2}\Big(\|D^{(1+a)\theta}f\|+\big\||x|^{\theta-1}D^{1+a}f\|+\big\|D^{(1+a)(\theta-1)}(xf)\|+\|\langle x \rangle^{\theta}f\|\Big).
\end{aligned}
\end{equation*}
The implicit constants in the previous inequalities are independent of $\mu$.    
\end{corollary}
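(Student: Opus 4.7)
The plan is to factor $U_\mu(t)\phi=U(t)(H_{\mu t}\phi)$, where $H_s$ denotes convolution with the heat kernel $G_s(x)=(4\pi s)^{-1/2}e^{-x^2/(4s)}$ (well defined for $s=\mu t\ge 0$). Applying Lemma \ref{Dthetau} to $H_{\mu t}\phi$ in place of $f$, the task reduces to bounding each resulting norm of $H_{\mu t}\phi$ by the corresponding norm of $\phi$, uniformly in $\mu\in(0,1)$. The uniformity will come from two elementary observations: $H_{\mu t}$ commutes with every Fourier multiplier and is an $L^2$-contraction, and the weighted kernel satisfies $\||\cdot|^\sigma G_s\|_{L^1}=C_\sigma s^{\sigma/2}$ with $\mu\le 1$ forcing $(\mu t)^{\sigma/2}\le \langle t\rangle^{\sigma/2}$.

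In the regime $0<\theta\le 1$, Lemma \ref{Dthetau} yields
\begin{equation*}
\||x|^\theta U_\mu(t)\phi\|\les \langle t\rangle\|D^{(1+a)\theta}H_{\mu t}\phi\|+\|\langle x\rangle^\theta H_{\mu t}\phi\|.
\end{equation*}
The first term is controlled by $\langle t\rangle\|D^{(1+a)\theta}\phi\|$ because $H_{\mu t}$ commutes with $D^{(1+a)\theta}$ and is a contraction on $L^2$. For the second, I would use the sub-additive inequality $\langle x\rangle^\theta\le \langle y\rangle^\theta+|x-y|^\theta$ and Minkowski to bound it by $\|\langle y\rangle^\theta\phi\|+\||\cdot|^\theta G_{\mu t}\|_{L^1}\|\phi\|$, and then absorb the last piece into $C\langle t\rangle\|\langle x\rangle^\theta\phi\|$.

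The case $1<\theta\le 2$ proceeds analogously from the second part of Lemma \ref{Dthetau}, but four norms of $H_{\mu t}\phi$ must now be handled. The terms $\|D^{(1+a)\theta}H_{\mu t}\phi\|$ and $\|\langle x\rangle^\theta H_{\mu t}\phi\|$ are treated as above (the sub-additive inequality remains valid up to a constant for $\theta\le 2$). For $\||x|^{\theta-1}D^{1+a}H_{\mu t}\phi\|$ I would commute $D^{1+a}$ through $H_{\mu t}$ and invoke the first case with $D^{1+a}\phi$ in place of $\phi$; the resulting byproduct $\langle t\rangle\|D^{1+a}\phi\|$ is absorbed into $\|D^{(1+a)\theta}\phi\|+\|\phi\|$ via a low/high frequency split at $|\xi|=1$, using $(1+a)\theta<1+a<0$. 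The most delicate term is $\|D^{(1+a)(\theta-1)}(xH_{\mu t}\phi)\|$: I would use the identity $xH_{\mu t}\phi=H_{\mu t}(x\phi)+(\cdot\,G_{\mu t})\ast\phi$ together with $\widehat{(\cdot G_{\mu t})}(\xi)=-2i\mu t\xi\,e^{-\mu t|\xi|^2}$ and the uniform bound $\mu t|\xi|e^{-\mu t|\xi|^2}\les (\mu t)^{1/2}$ to reduce it to $\|D^{(1+a)(\theta-1)}(x\phi)\|+(\mu t)^{1/2}\|D^{(1+a)(\theta-1)}\phi\|$; the last piece is once again absorbed into $\|D^{(1+a)\theta}\phi\|+\|\phi\|$ by frequency splitting.

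The main obstacle is maintaining uniformity in $\mu\in(0,1)$ while allowing arbitrary time $t$. This reduces to verifying (i) that every $\mu$-dependent factor of the form $(\mu t)^{\sigma/2}$ is dominated by $\langle t\rangle^{\sigma/2}$ thanks to $\mu\le 1$, and (ii) that the commutator $[x,H_{\mu t}]$ produces only the smoothing weight $(\mu t)^{1/2}$ at the level of Fourier multipliers. Once these are in hand, the low-/high-frequency decomposition closes the estimate strictly within the norms appearing on the right-hand side of the corollary, and the final bounds inherit the stated powers $\langle t\rangle$ and $\langle t\rangle^2$ directly from Lemma \ref{Dthetau}.
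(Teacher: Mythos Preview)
Your argument is correct, but it takes a different route from the paper's. The paper works entirely on the Fourier side: one writes $\widehat{U_\mu(t)f}(\xi)=h(\xi)\cdot e^{it\xi|\xi|^{1+a}}\widehat f(\xi)$ with $h(\xi)=e^{-\mu t|\xi|^2}$, applies the Stein-derivative product rule of Proposition~\ref{Leibnitz} (noting $\|h\|_{L^\infty}\le 1$ and $\|h'\|_{L^\infty}\lesssim (\mu t)^{1/2}$, hence $\mathcal D^\theta h\in L^\infty$ uniformly in $\mu$), and then invokes Lemma~\ref{Dthetau} for the remaining factor. In the range $1<\theta\le 2$ one differentiates once and repeats the product rule with $h(\xi)=-2\mu t\xi e^{-\mu t|\xi|^2}$, which again has bounded $\|h\|_{L^\infty}+\|h'\|_{L^\infty}$ uniformly in $\mu\in(0,1)$. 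Your approach instead factors $U_\mu(t)=U(t)H_{\mu t}$ and pushes the work onto physical-space convolution estimates for the heat kernel plus frequency splitting to absorb the byproducts $\|D^{1+a}\phi\|$ and $\|D^{(1+a)(\theta-1)}\phi\|$. Both arguments are short and yield the same $\langle t\rangle$ and $\langle t\rangle^2$ dependence; the paper's is slightly more economical since Proposition~\ref{Leibnitz} is already in hand, while yours avoids the Stein-derivative machinery in favor of elementary convolution bounds and makes the commutator $[x,H_{\mu t}]$ completely explicit.
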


\begin{proof}
The proof follows from Lemma \ref{Dthetau}, and Proposition \ref{Leibnitz} making $h(\xi)$ equal to $e^{-\mu t |\xi|^2}$, and $-2\mu t \xi e^{-\mu t |\xi|^2}$.     
\end{proof}

Now, we show that the semigroup $\{U_{\mu}(t)\}$ regularizes the initial condition in the following sense. 
\begin{proposition}\label{regularity}
Let $\lambda\in [0,\infty)$, $\mu>0$ and $a<-2$. For any $s\in \mathbb{R}$, and $t>0$,
\begin{equation*}
\|U_\mu(t)\phi\|_{H^{s+\lambda}}\les\Big(1+(\mu t)^{-\frac{\lambda}{2}}\Big)\|\phi\|_{H^s}, \qquad \phi\in H^s(\R),
\end{equation*}
where the implicit constant depends on $\lambda$.
\end{proposition}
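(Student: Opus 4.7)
The plan is to use Plancherel's theorem to reduce the estimate to a pointwise bound on the multiplier. By definition,
\begin{equation*}
\|U_\mu(t)\phi\|_{H^{s+\lambda}}^2 = \int_{\mathbb{R}} \langle \xi \rangle^{2(s+\lambda)} \bigl|e^{it\xi|\xi|^{1+a} - \mu t |\xi|^2}\bigr|^2 |\widehat{\phi}(\xi)|^2\, d\xi = \int_{\mathbb{R}} \langle \xi \rangle^{2\lambda} e^{-2\mu t |\xi|^2} \langle \xi \rangle^{2s} |\widehat{\phi}(\xi)|^2\, d\xi,
\end{equation*}
since $|e^{it\xi|\xi|^{1+a}}|=1$. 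Hence it suffices to prove the multiplier bound
\begin{equation*}
\sup_{\xi\in \mathbb{R}} \langle \xi \rangle^{2\lambda} e^{-2\mu t |\xi|^2} \lesssim_\lambda \bigl(1 + (\mu t)^{-\lambda}\bigr),
\end{equation*}
since then pulling out the supremum and applying Plancherel on the remaining integral yields $\|U_\mu(t)\phi\|_{H^{s+\lambda}}^2 \lesssim (1 + (\mu t)^{-\lambda}) \|\phi\|_{H^s}^2$, and taking square roots produces the claimed bound (using $\sqrt{1+x}\leq 1+\sqrt{x}$).

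To prove the multiplier bound, first note the elementary inequality $\langle \xi \rangle^{2\lambda} = (1+|\xi|^2)^\lambda \leq 2^\lambda(1 + |\xi|^{2\lambda})$ for $\lambda \geq 0$ (splitting cases $|\xi|\leq 1$ and $|\xi|\geq 1$). Thus
\begin{equation*}
\langle \xi \rangle^{2\lambda} e^{-2\mu t |\xi|^2} \leq 2^\lambda e^{-2\mu t |\xi|^2} + 2^\lambda |\xi|^{2\lambda} e^{-2\mu t |\xi|^2} \leq 2^\lambda + 2^\lambda |\xi|^{2\lambda}e^{-2\mu t |\xi|^2}.
\end{equation*}
For the second term I perform the change of variable $y = 2\mu t |\xi|^2$, so that
\begin{equation*}
|\xi|^{2\lambda} e^{-2\mu t |\xi|^2} = (2\mu t)^{-\lambda}\, y^\lambda e^{-y} \leq (2\mu t)^{-\lambda}\, \sup_{y\geq 0} y^\lambda e^{-y} = (2\mu t)^{-\lambda}\Bigl(\frac{\lambda}{e}\Bigr)^{\lambda},
\end{equation*}
where the last supremum is attained at $y=\lambda$. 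Combining these two estimates yields the multiplier bound with constant depending only on $\lambda$, which completes the proof.

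No step here presents a real obstacle: the argument is entirely elementary once one observes that the dispersive factor $e^{it\xi|\xi|^{1+a}}$ is unimodular (so the constraint $a<-2$ plays no role in this particular estimate, it only reflects the ambient framework) and that the parabolic regularization $e^{-\mu t|\xi|^2}$ dominates any polynomial weight at high frequency. The only quantitative input is the sharp bound on $y^\lambda e^{-y}$, which controls the blow-up rate $(\mu t)^{-\lambda/2}$ as $t\downarrow 0$.
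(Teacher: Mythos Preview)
Your proof is correct and follows essentially the same approach as the paper: reduce to a pointwise multiplier bound via Plancherel and then maximize $y^{\lambda}e^{-y}$ (equivalently $\omega^{2\lambda}e^{-\omega^2}$) by elementary calculus. The only cosmetic difference is that you spell out the split $\langle\xi\rangle^{2\lambda}\leq 2^\lambda(1+|\xi|^{2\lambda})$ explicitly, whereas the paper states the key inequality \eqref{regulineq} directly.
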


\begin{proof}
We first claim that for $\lambda,\mu,t>0$, it follows 
\begin{equation}\label{regulineq}
\|\xi^{2\lambda} e^{-\mu t\xi^2}\|_{L^{\infty}_{\xi}}\leq (\lambda^{-1} \mu t e)^{-\lambda}. 
\end{equation}
The previous inequality follows by setting $\omega=(t\mu)^{\frac{1}{2}}\xi$, and maximizing the function $\omega^{2\lambda}e^{-\omega^2}$, $\omega>0$. For a similar idea, see \cite[Lemma 2.1]{wfcr}. Now, since $\|U_\mu(t)\phi\|_{H^{s+\lambda}}=\|\langle \xi \rangle^{s+\lambda}e^{-\mu t\xi^2}\widehat{\phi}(\xi)\|$, the proof of Proposition \ref{regularity} is a consequence of \eqref{regulineq}. 
\end{proof}

\begin{proposition}\label{localdecaylemma}
Let $\varphi\in C^{\infty}_0(\mathbb{R})$. Then for any $\theta \in (0,1)$, and $0<\beta<\frac{1}{2}$, we have
\begin{equation}\label{Dstein2}
\mathcal{D}^\theta (|\xi|^{-\beta} \varphi (\xi))(x)\les |x|^{-\beta-\theta},
\end{equation}
for all $x\neq 0$. Similar conclusion holds for $\mathcal{D}^\theta (|\xi|^{-\beta}\sgn(\xi)\varphi(\xi))$.
\end{proposition}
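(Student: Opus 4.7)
The goal is a pointwise bound on $\mathcal{D}^\theta f(x)$ for $f(\xi):=|\xi|^{-\beta}\varphi(\xi)$, so the natural approach is to estimate the defining integral
\[
\mathcal{D}^\theta f(x)^2=\int_{\mathbb{R}}\frac{|f(x)-f(y)|^2}{|x-y|^{1+2\theta}}\,dy
\]
directly by splitting the domain according to the scales of $|x-y|$ and $|x|$. Fix $R>0$ with $\mathrm{supp}\,\varphi\subset[-R,R]$.

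\emph{Case 1: $|x|\geq 2R$.} Then $f(x)=0$ and $|x-y|\sim|x|$ for $y\in\mathrm{supp}\,f$, so
\[
\mathcal{D}^\theta f(x)^2\lesssim \frac{1}{|x|^{1+2\theta}}\int_{-R}^{R}|y|^{-2\beta}\,dy\lesssim |x|^{-(1+2\theta)},
\]
where integrability at $0$ uses $2\beta<1$. Since $|x|\geq 2R$ and $\beta<1/2$, the exponent satisfies $\tfrac{1+2\theta}{2}\geq\beta+\theta$, giving the desired bound $|x|^{-\beta-\theta}$.

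\emph{Case 2: $0<|x|<2R$.} Split $\mathcal{D}^\theta f(x)^2$ over the regions $A_1=\{|x-y|<|x|/2\}$ and $A_2=\{|x-y|\geq|x|/2\}$. On $A_1$ one has $|y|\sim|x|$, so $|\xi|^{-\beta}\varphi$ is smooth there and the mean value theorem gives
\[
|f(x)-f(y)|\lesssim \bigl(|x|^{-\beta-1}+|x|^{-\beta}\bigr)|x-y|.
\]
Inserting and integrating $|x-y|^{1-2\theta}$ over $|x-y|<|x|/2$ produces $(|x|^{-2\beta-2}+|x|^{-2\beta})|x|^{2-2\theta}\lesssim|x|^{-2\beta-2\theta}$ since $|x|\leq 2R$. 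On $A_2$ I use $|f(x)-f(y)|^2\leq 2|f(x)|^2+2|f(y)|^2$. The $|f(x)|^2$ piece contributes $|x|^{-2\beta}\cdot|x|^{-2\theta}$ after integrating $|x-y|^{-1-2\theta}$. For the $|f(y)|^2$ piece, split $A_2\cap\mathrm{supp}\,\varphi$ into $|y|\leq|x|/2$ (where $|x-y|\sim|x|$ and $\int|y|^{-2\beta}dy\lesssim|x|^{1-2\beta}$ by $\beta<1/2$) and $|y|>|x|/2$ (where rescaling $y=|x|u$ reduces the integral to $|x|^{-2\beta-2\theta}$ times a bounded integral, uniformly in $|x|$, using integrability of $|u|^{-2\beta-1-2\theta}$ at infinity since $\beta+\theta>0$ and the cutoff $|\mathrm{sgn}(x)-u|\geq 1/2$). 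Both contributions total $\lesssim|x|^{-2\beta-2\theta}$.

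Combining the two cases yields \eqref{Dstein2}. For the second assertion, write $|\xi|^{-\beta}\sgn(\xi)\varphi(\xi)$ and note that on $A_1$ we have $\sgn(x)=\sgn(y)$, so the sign factors out and the estimate reduces exactly to the previous one; on $A_2$ the crude bound $|f(x)-f(y)|^2\leq 2|f(x)|^2+2|f(y)|^2$ is insensitive to sign, so the same computation applies. The main subtlety will be bookkeeping in Case 2, namely verifying that the $|x|^{-\beta-1}$ factor from the mean value theorem combines with the truncated radial integration to give exactly the homogeneous scaling $|x|^{-\beta-\theta}$ expected from dimensional analysis, and ensuring uniform control of the rescaled integral on $A_2$ near the support boundary.
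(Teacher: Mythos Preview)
Your proof is correct. The paper does not give its own argument for this proposition; it simply cites \cite{Rianogbo}, so your direct computation of the Stein-derivative integral via the decomposition into $|x|\geq 2R$ and $0<|x|<2R$ (with the further $A_1/A_2$ splitting) is a self-contained proof where the paper offers none.
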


\begin{proof}
We refer to \cite[Proposition 2.13]{Rianogbo}.    
\end{proof}


\section{Persistence results in weighted spaces}\label{localwellp}

To deduce Theorem \ref{lwpw}, we will use a regularized version of \eqref{gbo}. More precisely, let $\mu\in(0,1)$, and consider the following Cauchy problem
\begin{equation}\label{mugbo}
    \left\{\begin{aligned}
    &\partial_t u -\partial_{x}D^{1+a} u+u\partial_xu=\mu \p_x^2 u,\quad x\in \mathbb{R}, \, t\in \mathbb{R}, \hspace{0.5cm} a\in (-\frac52,-2), \\
    &u(x,0)=\phi(x).
    \end{aligned}\right.
    \end{equation}
Following similar arguments in \cite{AbBonaFellSaut1989,APlow,Iorio1986}, see also the work for the KP equation in \cite{IorioNunes1998}, which exemplifies a study of negative dispersion, one can use a standard parabolic regularization argument to take $\mu\to 0$ in  \eqref{mugbo} to deduce the following local well-posedness result in the space $H_{a,\theta}^s(\mathbb{R})$ (recall definition \eqref{Hspacedefi}).
\begin{lemma}\label{existence} 
Let $-\frac{5}{2}<a<-2$, $\frac{2+a}{1+a}\leq \theta<-\frac{3}{2(1+a)}$, and $s>\frac{3}{2}$ be fixed. Consider, $\phi\in H^s_{a,\theta}(\R)$. Then for each $0\leq \mu<1$, there exist a common time $T=T(\|\phi\|_{H^s_{a,\theta}})>0$ independent of $\mu$, and a unique solution $u_\mu$ of \eqref{mugbo} if $0<\mu<1$, and of \eqref{gbo} if $\mu=0$, with common initial condition $u_{\mu}(0)=\phi$ such that
\begin{equation*}
u_{\mu}\in C([0,T];H^s_{a,\theta}(\R)).
\end{equation*}
Moreover, for each $0\leq \mu<1$ fixed, the flow map data-to-solution $\phi\mapsto u_{\mu}$ is continuous in the $H^s_{a,\theta}$-norm. Additionally, there exists a function $\rho\in C([0,T];[0,\infty))$ such that
\begin{equation*}
\|u_{\mu}(t)\|_{H^s_{a,\theta}}^2\leq \rho(t), \hspace{0.5cm} t\in [0,T],
\end{equation*}
for all $0\leq \mu<1$.
\end{lemma}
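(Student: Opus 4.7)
The plan is to follow the parabolic regularization scheme of \cite{AbBonaFellSaut1989,Iorio1986,IorioNunes1998}, the new feature being the control of the $\dot H^{(1+a)\theta}$ component of the $H^s_{a,\theta}$ norm. For fixed $\mu \in (0,1)$, cast \eqref{mugbo} in the Duhamel form $u(t)=U_\mu(t)\phi-\int_0^t U_\mu(t-t')(u\partial_x u)(t')\, dt'$ and seek a fixed point in $C([0,T_\mu]; H^s_{a,\theta})$. The parabolic smoothing of Proposition \ref{regularity} absorbs the derivative $\partial_x$ in the Duhamel integrand at the price of the integrable factor $(\mu(t-t'))^{-1/2}$; since $U_\mu$ commutes with $D^{(1+a)\theta}$, the homogeneous component of the norm is handled identically. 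Together with the $H^s$-algebra property for $s>3/2$, this delivers a contraction and a unique $u_\mu \in C([0,T_\mu]; H^s_{a,\theta})$.

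To extend to a time independent of $\mu$, I would pair the equation with $J^{2s}u_\mu$ and with $D^{2(1+a)\theta}u_\mu$ in $L^2$. The skew-adjoint dispersion $\partial_x D^{1+a}$ contributes zero, the viscous term gives $-\mu\|\partial_x J^s u_\mu\|^2$ (and an analogous nonpositive quantity for the homogeneous part), and the nonlinearity is bounded via the Kato--Ponce commutator in $H^s$ and via the fractional Leibniz rule \eqref{leibfract} and the commutator bounds of Proposition \ref{comuDs1} in $\dot H^{(1+a)\theta}$, yielding
\[
\frac{d}{dt}\|u_\mu\|_{H^s_{a,\theta}}^2 \lesssim C(\|u_\mu\|_{H^s_{a,\theta}})\|u_\mu\|_{H^s_{a,\theta}}^2.
\]
Gronwall then provides a common time $T=T(\|\phi\|_{H^s_{a,\theta}})>0$ and a continuous $\rho$ with $\|u_\mu(t)\|_{H^s_{a,\theta}}^2\leq \rho(t)$ for all $\mu \in [0,1)$.

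For $0<\mu'\leq \mu<1$, set $w=u_\mu-u_{\mu'}$ and apply the $L^2$-energy method. The dispersive contribution vanishes, the bilinear term is bounded by $\|\partial_x u_\mu\|_{L^\infty}\|w\|^2$, and the forcing $(\mu-\mu')\partial_x^2 u_{\mu'}$ integrated against $w$ produces $(\mu-\mu')\|w_x\|\|u_{\mu',x}\|$, absorbed by $\mu\|w_x\|^2$ via Cauchy--Schwarz and leaving a remainder of size $O(|\mu-\mu'|)$. Gronwall combined with the uniform bounds then shows that $\{u_\mu\}$ is Cauchy in $C([0,T]; L^2)$; interpolation with the uniform $H^s_{a,\theta}$ bound yields a limit $u \in L^\infty([0,T]; H^s_{a,\theta})\cap C([0,T]; H^{s'}_{a,\theta'})$ for every $s'<s$, $\theta'<\theta$, which solves \eqref{gbo} in the sense of distributions. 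Uniqueness for each $\mu\in[0,1)$ follows from the analogous $L^2$ estimate applied to two solutions with the same initial data. The upgrade from weak to strong continuity in $C([0,T]; H^s_{a,\theta})$ and the continuous dependence of the flow map are obtained by a Bona--Smith argument: approximate $\phi$ by mollified data $\phi_n \to \phi$ in $H^s_{a,\theta}$ and combine the uniform bounds with the quantitative lower-norm stability just derived.

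The main obstacle I expect is the nonlinear estimate in the homogeneous norm $\|D^{(1+a)\theta}(u\partial_x u)\|$: the negative order $(1+a)\theta<0$ rules out a direct integration by parts, so I plan to rewrite $u\partial_x u = \tfrac12\partial_x u^2$, move to frequency, and combine the fractional Leibniz rule \eqref{leibfract} with the commutator bounds \eqref{comuDs2}--\eqref{comuDs3}. The hypothesis $\frac{2+a}{1+a}\leq \theta<-\frac{3}{2(1+a)}$, equivalent to $(1+a)\theta\in(-\tfrac{3}{2},\, 2+a]$, is precisely what keeps every fractional order appearing in these estimates inside the admissible range. A secondary technical point, the compatibility of the Bona--Smith mollification with the negative-order homogeneous norm, is handled by noting that convolution with a Schwartz mollifier is a bounded Fourier multiplier converging to $1$ locally, while $|\xi|^{(1+a)\theta}\widehat{\phi}\in L^2$ by hypothesis.
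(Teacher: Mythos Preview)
Your outline captures the parabolic regularization scheme, but there is a genuine gap in the uniform-in-$\mu$ a priori estimate. You propose to pair the equation with $D^{2(1+a)\theta}u_\mu$ to control the $\dot H^{(1+a)\theta}$ component and then claim that ``the skew-adjoint dispersion $\partial_xD^{1+a}$ contributes zero''. But that cancellation is not justified at this level: on the Fourier side the dispersive contribution reads
\[
\int i\,\mathrm{sgn}(\xi)\,|\xi|^{2+a+2(1+a)\theta}\,|\widehat{u_\mu}(\xi)|^{2}\,d\xi,
\]
and since $2+a<0$ the exponent $2+a+2(1+a)\theta$ is strictly more negative than $2(1+a)\theta$. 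Integrability near $\xi=0$ would require $u_\mu\in\dot H^{(1+a)\theta+\frac{2+a}{2}}$, which is not part of the hypothesis and is not supplied by the heat multiplier $e^{-\mu t\xi^{2}}$ (it gains nothing at low frequency). So the differential energy identity in $\dot H^{(1+a)\theta}$ is formal only, and the resulting Gronwall step is unjustified.

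The paper avoids this by running the differential energy estimate \emph{only} in $H^s$. There the cancellation $\langle\partial_xD^{1+a}J^{s}u_\mu,J^{s}u_\mu\rangle=0$ is rigorous, and this is where the lower bound on $\theta$ is actually used: writing $\partial_xD^{1+a}=\mathcal H D^{2+a}$, one needs $D^{\frac{2+a}{2}}J^{s}u_\mu\in L^{2}$, which follows from $u_\mu\in H^{s}\cap\dot H^{(1+a)\theta}$ because $\theta\ge\frac{2+a}{1+a}>\frac{2+a}{2(1+a)}$ forces $\frac{2+a}{2}-(1+a)\theta>0$. Having obtained $\|u_\mu(t)\|_{H^{s}}^{2}\le\rho_1(t)$ from a closed ODE, the $\dot H^{(1+a)\theta}$ component is controlled through the Duhamel formula (not an energy identity):
\[
\|D^{(1+a)\theta}u_\mu(t)\|\le\|D^{(1+a)\theta}\phi\|+c\int_0^{t}\|D^{(1+a)\theta}\partial_x(u_\mu^{2})(\tau)\|\,d\tau\le\|D^{(1+a)\theta}\phi\|+c\int_0^{t}\rho_1(\tau)\,d\tau,
\]
using the estimates of Remark~\ref{remarkona}; the upper bound $\theta<-\frac{3}{2(1+a)}$ enters precisely here, to make $|\xi|^{(1+a)\theta+1}$ locally square integrable when $(1+a)\theta+1<0$. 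Thus the two endpoints of the $\theta$-window play different roles than you anticipated: the lower bound justifies the $H^{s}$ energy identity, the upper bound controls the nonlinearity in $\dot H^{(1+a)\theta}$. Your $L^{2}$-Cauchy argument and the Bona--Smith upgrade to strong continuity are fine and correspond to the paper's steps (iii)--(v).
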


Since the term $\partial_x D^{a+1}u$ in \eqref{gbo} involves a negative order operator, in contrast with \cite{AbBonaFellSaut1989,APlow,Iorio1986} some extra conditions and arguments are required to obtain Lemma \ref{existence}. For example, condition $\frac{2+a}{1+a}\leq \theta<-\frac{3}{2(1+a)}$ is needed to justify energy estimates in $L^2(\mathbb{R})$. Since the literature on the parabolic regularization method does not deal with equations with negative derivatives such as those given in \eqref{gbo}, we have decided to deduce Lemma \ref{existence} in the Appendix.

\begin{remark}\label{remarkona}
(i) In the proof of Lemma \ref{existence}, the condition $0<\theta<-\frac{3}{2(1+a)}$ is important to control the $\dot{H}^{(a+1)\theta}$-norm of the nonlinear term in \eqref{mugbo}. To see this, when $(a+1)\theta+1\geq 0$, we use the fractional Leibniz rule (see \eqref{leibfract}) to get
\begin{equation}\label{eqparabolicarg1}
\|D^{(a+1)\theta}\partial_x(u^2)\|=\|D^{(a+1)\theta+1}(u^2)\|\les \|u\|_{L^{\infty}}\|D^{(a+1)\theta+1} u\|\les \|u\|_{H^{\frac{3}{2}^{+}}}^2,    
\end{equation}
where we used the embedding $H^{\frac{3}{2}^{+}}(\mathbb{R})\hookrightarrow L^{\infty}(\mathbb{R})$ and the fact $\|u\|_{H_{a,\theta}^s}^2=\|u\|_{H^s}^2+\|D^{(a+1)\theta}u\|^2$. Now, when $(a+1)\theta+1<0$, using that $a>-1-\frac{3}{2\theta}$, and thus, $(a+1)\theta+\frac{3}{2}>0$, we deduce
\begin{equation}\label{eqparabolicarg2}
\begin{aligned}
  \|D^{(a+1)\theta}\partial_x(u^2)\|=\||\xi|^{(a+1)\theta+1}\widehat{u^2}(\xi)\|&\les \|\widehat{u^2}\|_{L^{\infty}}\||\xi|^{(a+1)\theta+1}\|_{L^2(|\xi|\leq 1)}+\|\widehat{u^2}\|_{L^2(|\xi|\geq 1)}\\
&\les \|u\|^2,  
\end{aligned}
\end{equation}
where we have also used that $\|\widehat{u^2}\|_{L^{\infty}}\les \|u^2\|_{L^1}=\|u\|^2$.

(ii) Using energy estimates, part of the proof of Lemma \ref{existence} (see Appendix) shows that $u_{\mu}$ converges to $u$ as $\mu \to 0^{+}$ in the sense of $C([0,T];L^2(\mathbb{R}))$.
\end{remark}


\begin{proof}[Proof of Theorem \ref{lwpw}]

We divide the proof into three main cases: $0<\theta \leq 1$, $1<\theta <\frac{-3}{2(a+1)}$, and  $\frac{-3}{2(a+1)}\leq \theta<\frac{1+2a}{2(1+a)}$. For the first two cases, we will deduce Theorem \ref{lwpw} by a contraction argument and a limiting process $\mu\to 0$ of solutions of \eqref{mugbo}. For the latter, we will use the results of case $0<\theta <\frac{-3}{2(a+1)}$, and some iteration. 

\underline{\bf Assume that $0<\theta \leq 1$}. We fix $s>\frac{3}{2}$ and consider $\phi\in Z_{s,\theta}^{a,\widetilde{\theta}}$, where if $\theta< \frac{2+a}{1+a}$, $\widetilde{\theta}=\frac{2+a}{1+a}$, and if $\theta\geq  \frac{2+a}{1+a}$, $\widetilde{\theta}=\theta$. For the definition of the spaces $H_{a,\widetilde{\theta}}^s(\mathbb{R})$, and $Z_{s,\theta}^{a,\widetilde{\theta}}$, see \eqref{Hspacedefi} and  \eqref{defiZspace}, respectively.  In virtue of Lemma \ref{existence}, we have a family $\{u_{\mu}\}_{0\leq \mu\leq 1}$ of solutions of \eqref{mugbo}, if $0<\mu<1$, and the solution of \eqref{gbo} if $\mu=0$, all of them in the same class  $C([0,T];H^s_{a,\widetilde{\theta}}(\R))$  for a common time $T>0$, and such that $u_{\mu}(0)=\phi$. We first show that for each $0<\mu< 1$, the solution $u_{\mu}$ is in the class $ C([0,T];Z_{s,\theta}^{a,\widetilde{\theta}})$. To get such a result, we will use the contraction mapping principle and uniqueness. We set
\begin{equation}
\mathcal{X}_{T_1,\theta}^s=\{v\in C([0,T_1];Z_{s,\theta}^{a,\widetilde{\theta}}) \; :\;\sup_{t\in [0,T_1]}\|v(t)-U_\mu(t)\phi\|_{Z_{s,\theta}^{a,\widetilde{\theta}}}\leq 2\|\phi\|_{Z_{s,\theta}^{a,\widetilde{\theta}}}\},
\end{equation}
endowed with the distance function
\begin{equation*}
 d(v,v_1):=\sup_{t\in [0,T_1]}\|v(t)-v_1(t)\|_{Z_{s,\theta}^{a,\widetilde{\theta}}}=\sup_{t\in [0,T_1]}\Big(\|v(t)-v_1(t)\|_{H_{a,\widetilde{\theta}}^s}+\||x|^{\theta}\big(v(t)-v_1(t)\big)\|_{L^2}\Big).  
\end{equation*}
We consider the following function $\Phi(v)$, with $v\in \mathcal{X}_{T_1,\theta}^s$, being given by the integral formulation of \eqref{mugbo}
\begin{equation}\label{muint}
\Phi (v)(t)=U_\mu(t)\phi -\frac12\int_{0}^{t}U_\mu(t-\tau)\partial_{x}v^2(\tau)d\tau,
\end{equation}
where the operators $\{U_{\mu}(t)\}_{t>0}$ are defined as in \eqref{musemi}.  In what follows, we will show that for $T_1>0$ small enough, $\Phi$ defines a contraction on the complete metric space $\mathcal{X}_{T_1,\theta}^s$.

We first remark that the $Z_{s,\theta}^{a,\widetilde{\theta}}$-norm of $U_{\mu}(t)\phi$ is well-defined. This is easy to check for the $H^{s}_{a,\widetilde{\theta}}$-component of both norms. In the weighted space $L^2(|x|^{2\theta}\, dx)$, we observe that the definition of the space $H_{a,\widetilde{\theta}}^s(\mathbb{R})$ yields  $D^{(1+a)\theta}\phi\in L^2(\mathbb{R})$ (which, by definition of $\widetilde{\theta}$, follows from the fact $H^s(\mathbb{R})\cap \dot{H}^{(1+a)\widetilde{\theta}}(\mathbb{R})\hookrightarrow H^s(\mathbb{R})\cap \dot{H}^{(1+a)\theta}(\mathbb{R})$). Hence, it follows that $\phi$ satisfies the hypothesis in Corollary \ref{Dmu}, from which we get that $U_{\mu}(t)\phi$ is defined in the space $L^2(|x|^{2\theta}\, dx)$, and it follows
\begin{equation*}
   \|U_{\mu}(t)\phi\|_{Z_{s,\theta}^{a,\widetilde{\theta}}}\lesssim \langle t\rangle\|\phi\|_{Z_{s,\theta}^{a,\widetilde{\theta}}}
\end{equation*}
with implicit constant independent of $0<\mu<1$. Moreover, one can also prove that $U_{\mu}(t)\phi$ defines a continuous curve in $Z_{s,\theta}^{a,\widetilde{\theta}}$.

Let us estimate $\Phi(v)$ in the space $\mathcal{X}_{T_1,\theta}^s$, where $v\in \mathcal{X}_{T_1,\theta}^s$. We first study the $H^s_{a,\widetilde{\theta}}$-norm. Using that $\{U_{\mu}(t)\}_{t\geq 0}$ are contracting operators, Proposition \ref{regularity}, and the fact that $H^{\frac{3}{2}^{+}}(\mathbb{R})$ is a Banach algebra, it is seen that
\begin{equation*}
\begin{aligned}
 \|\Phi(v)(t)-U_{\mu}(t)\phi\|_{H^s}\leq & \int_0^{T_1}\|U_{\mu}(t-\tau)(v^2)(\tau)\|_{H^{s+1}}\, d\tau\\
 \leq & c\Big(\int_0^{T_1}(1+(\mu(t-\tau))^{-\frac{1}{2}})\,d \tau\Big)\Big(\sup_{t\in [0,T_1]}\|v(t)\|_{H^{s}}\Big)^2\\
 \leq & cT_1^{\frac{1}{2}}\Big(T_1^{\frac{1}{2}}+\mu^{-\frac{1}{2}}\Big)\Big(\sup_{t\in [0,T_1]}\|v(t)\|_{H^{s}}\Big)^2.
\end{aligned}    
\end{equation*}
The argument in Remark \ref{remarkona} (i) establishes
\begin{equation}\label{des1Phi0}
\begin{aligned}
 \|D^{(1+a)\widetilde{\theta}}(\Phi(v)(t)-U_{\mu}(t)\phi)\|\leq & \int_0^{T_1}\|D^{(1+a)\widetilde{\theta}}\partial_x(v^2)(\tau)\|\, d\tau\\
 \leq & cT_1\Big(\sup_{t\in [0,T_1]}\|v(t)\|_{H^{s}}\Big)^2.
\end{aligned}    
\end{equation}
The previous two estimates complete the study of $\Phi$ in the space $H_{a,\widetilde{\theta}}^s(\mathbb{R})$. To control the $L^2(|x|^{2\theta})$-norm, we apply Corollary \ref{Dmu}, and the arguments in Remark \ref{remarkona} to get
\begin{equation}\label{des1Phi}
\begin{split}
\||x|^\theta(\Phi(v)(t)-U_\mu (t)\phi)\| 
&\leq c\langle T_1\rangle\int_0^{T_1} \Big( \|D^{(1+a)\theta}\p_x (v^2)(\tau)\|+\||x|^\theta \p_x (v^2)(\tau)\| \Big)\, d\tau\\
&\leq c\langle T_1\rangle\int_0^{T_1} \Big( \|v(\tau )\|_{H^s}^2+\|\partial_x v(\tau)\|_{L^{\infty}}\||x|^\theta v(\tau)\| \Big)\, d\tau \\
&\leq  cT_1\langle T_1\rangle \Big(\sup_{t\in [0,T_1]} \big(\|v(t)\|_{H^s}+\||x|^{\theta}v(\tau)\|\big)\Big)^2,
\end{split}
\end{equation}
where to bound $\|\partial_x v\|_{L^{\infty}}$, we used Sobolev embedding, and then Young's inequality.

Similarly, the previous arguments show
\begin{equation*}
\begin{aligned}
\|\Phi(v)(t)-\Phi(v_1)(t)\|_{H^s_{a,\widetilde{\theta}}}\leq cT_1^{\frac{1}{2}}\big(T^{\frac{1}{2}}_1+\mu^{-\frac{1}{2}}\big)&\big(\sup_{t\in[0,T_1]}\|v(t)\|_{H^s}+\sup_{t\in[0,T_1]}\|v_1(t)\|_{H^s}\big)\\
&\qquad \times\big(\sup_{t\in[0,T_1]}\|v(t)-v_1(t)\|_{H^s}\big),    
\end{aligned}
\end{equation*}
and it follows
\begin{equation*}
\begin{aligned}
\||x|^{\theta}\big(\Phi(v)(t)&-\Phi(v_1)(t)\big)\|\\
\leq cT_1\langle T_1\rangle &\Big(\sup_{t\in[0,T_1]}\big(\|v(t)\|_{H^s}+\||x|^{\theta}v(t)\|_{H^s}\big)+\sup_{t\in[0,T_1]}\big(\|v_1(t)\|_{H^s}+\||x|^{\theta}v_1(t)\|\big)\Big)\\
&\qquad \times\big(\sup_{t\in[0,T_1]}\|v(t)-v_1(t)\|_{H^s}+ \sup_{t\in[0,T_1]}\||x|^{\theta}\big(v(t)-v_1(t)\big)\|\big).   
\end{aligned}
\end{equation*}
Consequently, gathering our previous estimates,  and taking $T_1=T_1(\mu, \phi)>0$ sufficiently small such that
\begin{equation*}
\left\{\begin{aligned}
    &cT^{\frac{1}{2}}_1\big(T^{\frac{1}{2}}+\mu^{-\frac{1}{2}}\big)\langle T_1\rangle^2\|\phi\|_{Z_{s,\theta}^{a,\widetilde{\theta}}}<\frac{1}{2}, \\   
    &cT_1\langle T_1 \rangle^2 \|\phi\|_{Z_{s,\theta}^{a,\widetilde{\theta}}}<\frac{1}{2},
\end{aligned}\right.   
\end{equation*}
it follows that $\Phi:\mathcal{X}_{T_1,\theta}^s\to \mathcal{X}_{T_1,\theta}^s$ is a contraction, where $c>0$ above comes from our estimates and it is independent of $\mu$, $\phi$, and $T_1>0$. Therefore, Banach fixed point theorem guarantees that there exists a unique solution of the integral formulation of \eqref{mugbo} in the space $\mathcal{X}_{T_1,\theta}^s$, but such a solution is also a solution of \eqref{mugbo} in the class $C([0,T_1];H^s_{a,\widetilde{\theta}}(\mathbb{R}))$. It turns out from the uniqueness statement in Lemma \ref{existence} that $u_\mu\in C([0,T^{\ast}];L^2(|x|^{2\theta}\, dx))$, where $T^{\ast}=\min\{T,T_1\}$. Next, we show that the solution $u_{\mu}$ can be extended, if necessary, to the whole space $C([0,T];L^2(|x|^{2\theta}\, dx))$. To get such result, we have that if $T_1<T$ and $u_\mu\in C([0,T_1];L^2(|x|^{2\theta}\, dx))$, the arguments in Remark \ref{remarkona}, and \eqref{des1Phi} yield 
\begin{equation}\label{aftergronwall}
\begin{split}
\||x|^\theta u_\mu(t)\|\leq &  \; \||x|^{\theta}U_\mu (t)\phi\|+\int_0^t \||x|^{\theta}U_\mu (t-\tau)\p_x u_{\mu}^2\|d\tau\\
\leq & c\langle T\rangle\|\phi\|_{Z_{s,\theta}^{a,\widetilde{\theta}}}+cT\langle T\rangle\big(\sup_{t\in [0,T]}\rho(t)\big)+c\langle T \rangle\big(\sup_{t\in [0,T]}\rho(t)^{\frac{1}{2}}\big)\int_0^t\||x|^{\theta}u_{\mu}(\tau)\|\, d\tau,
\end{split}
\end{equation}
for all $t\in [0,T_1]$, and the function $\rho(t)$ as in Lemma \ref{existence}. Consequently, \eqref{aftergronwall}, and Gronwall's inequality show
\begin{equation}\label{uniformbound}
\sup_{t\in [0,T_1]}\|\lanx^{\theta}u_\mu(t)\|\leq C(\phi,T)<\infty
\end{equation}
for a constant $C(\phi,T)>0$ independent of $\mu$ and $T_1>0$. This in turn implies that $u_{\mu}$ can be extended to the class $C([0,T];L^2(|x|^{2\theta}\, dx))$, and \eqref{uniformbound} holds for $t\in [0,T]$. Additionally, since $u_{\mu}\to u$ in $C([0,T];L^2(\mathbb{R}))$ (see Remark \ref{remarkona} (ii)), we can take $\mu\to 0^{+}$ (using Fatou's lemma) in the inequality above to deduce 
\begin{equation*}
\sup_{t\in [0,T]}\|\lanx^{\theta}u(t)\|\leq C(\phi,T),
\end{equation*}
from which $u\in L^{\infty}([0,T];L^2(|x|^{2\theta}\, dx))$. The continuity $u\in C([0,T];L^2(|x|^{2\theta}\, dx))$ follows by approximation (e.g., see \cite{CunhaPastor2014}), and it can be obtained from energy estimates.  Continuous dependence also follows similarly. This completes the proof of Theorem \ref{lwpw} (i). 


\underline{\bf Assume that $1<\theta <\frac{-3}{2(1+a)}$}. Here, we fix $s\geq \max\{\frac{3}{2}^{+},\frac{5\theta(((1+a)(\theta-1)+1))}{3(2\theta-1)-\theta}\}$, and consider $\phi \in \mathcal{Z}_{s,\theta}^{a,\theta}$ (see \eqref{defiZspace2}). We will follow the same strategy as in the previous case. Thus,  by  Lemma \ref{existence}, let $\{u_{\mu}\}_{0\leq \mu\leq 1}$ be a family of solutions of \eqref{mugbo} if $0<\mu<1$, and the solution of \eqref{gbo} if $\mu=0$, all of them in the same class  $C([0,T];H^s_{a,\theta}(\R))$  for a common time $T>0$, and with the same initial data $u_{\mu}(0)=\phi$. We define
\begin{equation}
\overline{\mathcal{X}}_{T_1,\theta}^s=\{v\in C([0,T_1];\mathcal{Z}_{s,\theta}^{a,\theta}) \; :\;\sup_{t\in [0,T_1]}\|v(t)-U_\mu(t)\phi\|_{\mathcal{Z}_{s,\theta}^{a,\theta}}\leq 2\|\phi\|_{\mathcal{Z}_{s,\theta}^{a,\theta}}\},
\end{equation}
equipped with distance
\begin{equation*}
\begin{aligned}
\overline{d}(v,v_1):=&\sup_{t\in [0,T_1]}\|v(t)-v_1(t)\|_{\mathcal{Z}_{s,\theta}^{a,\theta}}\\
=&\sup_{[0,T_1]}\Big(\|v(t)-v_1(t)\|_{H_{a,\theta}^s}+\||x|^{\theta}\big(v(t)-v_1(t)\big)\|_{L^2}+\||x|^{\theta-1}D^{1+a}\big(v(t)-v_1(t)\big)\|_{L^2}\\
&+\|D^{(1+a)(\theta-1)}\big(x(v(t)-v_1(t))\big)\|_{L^2}\Big).     
\end{aligned}
\end{equation*}
We consider the same function $\Phi$ as in \eqref{muint}, but here it is given over $\overline{\mathcal{X}}_{T_1,\theta}^s$. 

We  first note that $U_{\mu}(t)\phi$ is defined in  $\mathcal{Z}_{s,\theta}^{a,\theta}$. The continuity of the semigroup $\{U_{\mu}(t)\}$ in $L^2$-spaces shows that the $H^s_{a,\theta}(\mathbb{R})$-norm of  $U_{\mu}(t)\phi$ is well-defined. Now, by the definition of $\mathcal{Z}_{s,\theta}^{a,\theta}$, we have $|x|^{\theta-1}D^{1+a}\phi$, $D^{(1+a)(\theta-1)}(x\phi)$, and  $D^{(1+a)\theta}\phi$ belong to $L^2(\R)$. Hence, it follows that $\phi$ satisfies the hypothesis in Corollary \ref{Dmu}, from which we get that $U_{\mu}(t)\phi$ is defined in the space $L^2(|x|^{2\theta}\, dx)$. Additionally, Corollary \ref{Dmu} also shows $D^{(1+a)(\theta-1)}\big(x U_{\mu}(t)\phi\big)\in L^2(\mathbb{R})$, $|x|^{\theta-1}D^{1+a} U_{\mu}(t) \phi \in L^2(\mathbb{R})$ provided the assumptions over $\phi$ hold. Summarizing, we have
\begin{equation*}
    \|U_\mu(t)\phi\|_{\mathcal{Z}_{s,\theta}^{a,\theta}}\lesssim \langle t\rangle^2 \|\phi\|_{\mathcal{Z}_{s,\theta}^{a,\theta}},
\end{equation*}
and the mapping $t\mapsto U_\mu(t)\phi\in  \mathcal{Z}_{s,\theta}^{a,\theta}$ is continuous.

Let us show the contraction property of $\Phi$ over $\overline{\mathcal{X}}_{T_1,\theta}^s$. By Remark \ref{remarkona}, the $H^s_{a,\theta}$-norm is estimated as in the case $0<\theta\leq 1$ above.  To study the $L^2(|x|^{2\theta}\, dx)$-norm, we use Corollary \ref{Dmu} to get
\begin{equation}\label{des1Phi2}
\begin{split}
\||x|^\theta(\Phi(v)(t)-U_\mu (t)\phi)\| 
\leq & c\langle T_1\rangle^2\int_0^{T_1} \Big( \|D^{(1+a)\theta}\p_x (v^2)(\tau)\|+\||x|^{\theta-1}D^{1+a}\p_x (v^2)(\tau)\|\\
&\hspace{1.5cm}+\|D^{(1+a)(\theta-1)}(x\p_x (v^2))(\tau)\|+\||x|^\theta \p_x (v^2)(\tau)\| \Big)\, d\tau.
\end{split}
\end{equation}
We continue with the estimate of the right-hand side of the expression above. For the moment, we omit the dependence on the time variable $\tau$. Notice that by Remark \ref{remarkona}, the first and last term above can be estimated as in \eqref{des1Phi}. Using Theorem \ref{stein}, the property \eqref{Leib} of $\mathcal{D}^{\theta-1}$, and writing $\xi=|\xi|\sgn(\xi)$, we have
\begin{equation*}
\begin{aligned}
\||x|^{\theta-1}&D^{1+a}\partial_x(v^2)\|\\
\les &\|D^{\theta-1}\big(|\xi|^{2+a} \sgn(\xi)\varphi\widehat{v^2}(\xi)\big)\| +\|D^{\theta-1}\big(|\xi|^{2+a} \sgn(\xi)(1-\varphi)\widehat{v^2}(\xi)\big)\|  \\
\les &\|\mathcal{D}^{\theta-1}\big(|\xi|^{2+a} \sgn(\xi)\varphi)\widehat{v^2}(\xi)\big)\|+\||\xi|^{2+a}\varphi \mathcal{D}^{\theta-1}(\widehat{v^2}\big)\|\\
&+\big(\||\xi|^{2+a}\sgn(\xi)(1-\varphi)\|_{L^{\infty}}+\|\mathcal{D}^{\theta-1}\big(|\xi|^{2+a}\sgn(\xi)(1-\varphi)\big)\|_{L^{\infty}}\big)\|J_{\xi}^{\theta-1}(\widehat{v^2})\|,
\end{aligned}    
\end{equation*}
where we used a function $\varphi\in C^{\infty}_0(\mathbb{R})$ with $\varphi(\xi)=1$, whenever $|\xi|\leq 1$. We proceed with the estimates of the right-hand side of the above inequality. Since $0<-(2+a)<\frac{1}{2}$, and $1<\theta<-\frac{3}{2(1+a)}\leq \frac{7}{2}+a$, we can apply Proposition \ref{localdecaylemma} to deduce
\begin{equation*}
\begin{aligned}
\|\mathcal{D}^{\theta-1}\big(|\xi|^{2+a} \sgn(\xi)\varphi)\widehat{v^2}(\xi)\big)\|\lesssim & \||\xi|^{3+a-\theta}\widehat{v^2}\|_{L^2(|\xi|\leq 1)}+\|\widehat{v^2}\|_{L^2(|\xi|\geq 1)}\\
\les & \||\xi|^{3+a-\theta}\|_{L^2(|\xi|\leq 1)}\|\widehat{v^2}\|_{L^{\infty}}+\|v^2\|\\
\les & \|v^2\|_{L^{1}}+\|v\|_{L^{\infty}}\|v\|\les \|v\|_{H^s}^2.
\end{aligned}    
\end{equation*}
Let $-(2+a)<\beta<\frac{1}{2}$, by H\"older's inequality, and Hardy–Littlewood–Sobolev estimate, we deduce
\begin{equation}\label{Newestimate0.1}
\begin{aligned}
\||\xi|^{2+a}\varphi \mathcal{D}^{\theta-1}(\widehat{v^2}\big)\|\les & \||\xi|^{2+a}\varphi \|_{L^{\frac{1}{\beta}}}\|\mathcal{D}^{\theta-1}(\widehat{v^2}\big)\|_{L^{\frac{2}{1-2\beta}}}  \\
\les & \|J^{\theta-1}(\widehat{v^2}\big)\|_{L^{\frac{2}{1-2\beta}}}\\
\les & \|J^{\theta-1+\beta}(\widehat{v^2}\big)\|\les \|v\|_{L^{\infty}}\|\langle x \rangle^{\theta}v\|,
\end{aligned}    
\end{equation}
where we also applied Theorem \ref{stein} to bound the fractional derivative $\mathcal{D}^{\theta-1}$ by $J^{\theta-1}$ in $L^{\frac{2}{1-2\beta}}(\mathbb{R})$. Notice that the last line above also provides an estimate for $\|J_{\xi}^{\theta-1}(\widehat{v^2})\|$. Summarizing, we arrive at
\begin{equation}\label{Newestimate1}
\begin{aligned}
\||x|^{\theta-1}D^{1+a}\partial_x(v^2)\|\les \|v\|_{H^s}^2+\|v\|_{H^s}\|\langle x \rangle^{\theta} v\|.
\end{aligned}    
\end{equation}
On the other hand, using Plancherel's identity and distributing the local derivative $\frac{\partial}{\partial \xi}$ provided by the weight $x$ in the frequency domain, it follows
\begin{equation*}
\begin{aligned}
\||D^{(1+a)(\theta-1)}(x\partial_x(v^2))\|\les & \||\xi|^{(1+a)(\theta-1)}\widehat{v^2}\|+\||\xi|^{(1+a)(\theta-1)+1}\frac{\partial }{\partial\xi}\widehat{v^2}\|.
\end{aligned}    
\end{equation*}
Using that $\theta<-\frac{3}{2(1+a)}<\frac{1+2a}{2(1+a)}$, and dividing into frequencies $|\xi|\leq 1$ and $|\xi|\geq 1$, we find
\begin{equation*}
 \||\xi|^{(1+a)(\theta-1)}\widehat{v^2}\|\les  \||\xi|^{(1+a)(\theta-1)}\|_{L^2(|\xi|\leq 1)}\|\widehat{v^2}\|_{L^{\infty}}+\|\widehat{v^2}\|\les \|v\|_{H^s}^2.
\end{equation*}
On the other hand, since $0<\theta<\frac{1+2a}{2(1+a)}$, we have $(1+a)(\theta-1)+1>0$. Thus, we distribute the local derivative to get
\begin{equation}\label{Newestimate1.1}
 \begin{aligned}
\||\xi|^{(1+a)(\theta-1)+1}\frac{\partial}{\partial \xi}\widehat{v^2}\|
\lesssim & \|\frac{\partial}{\partial \xi}\big(\langle \xi \rangle^{(1+a)(\theta-1)+1}\widehat{v^2}\big)\|+\|\frac{\partial}{\partial \xi}\big(\langle \xi \rangle^{(1+a)(\theta-1)+1}\big)\widehat{v^2}\|\\
\lesssim & \|J_{\xi}\big(\langle \xi \rangle^{{(1+a)(\theta-1)+1}}\widehat{v^2}\big)\|\\   
\les &\|\langle \xi\rangle^{s_1}\widehat{v^2}\|^{\frac{\theta_1-1}{\theta_1}}\|J_{\xi}^{\theta_1}\widehat{v^2}\|^{\frac{1}{\theta_1}}, 
\end{aligned}   
\end{equation}
where we applied complex interpolation with $\theta_1=\frac{(4s-1)}{2s}\theta$ and $s_1=\frac{\theta_1((1+a)(\theta-1)+1)}{(\theta_1-1)}$, and that $|\frac{\partial}{\partial \xi}\big(\langle \xi \rangle^{(1+a)(\theta-1)+1}\big)|\lesssim \langle \xi \rangle^{(1+a)(\theta-1)+1}$. Since, $\|J_{\xi}^{\theta_1}\widehat{v^2}\|=\|\langle x\rangle^{\theta_1}v^2\|$, Sobolev embedding, $H^{\frac{1}{4}}(\mathbb{R})\hookrightarrow L^{4}(\mathbb{R})$, and complex interpolation yield
\begin{equation}\label{Newestimate1.2}
\begin{aligned}
 \|\langle x\rangle^{\theta_1}v^2\|=\|\langle x\rangle^{\frac{\theta_1}{2}}v\|^2_{L^4}\les & \|J^{\frac{1}{4}}(\langle x\rangle^{\frac{\theta_1}{2}}v)\|^2 \\
 \les & \|\langle x \rangle^{\theta}v\|^{\frac{4s-1}{2s}}\|v\|_{H^s}^{\frac{1}{2s}}.
\end{aligned}    
\end{equation}
We have that $s_1\leq \frac{5\theta((1+a)(\theta-1)+1)}{3(2\theta-1)-\theta}\leq s$, then  since $H^s(\mathbb{R})$ is a Banach algebra, it is seen that
\begin{equation*}
 \|\langle \xi\rangle^{s_1}\widehat{v^2}\|\les \|J^s(v^2)\|_{L^2}\lesssim \|v\|_{H^s}^2. 
\end{equation*}
Collecting the previous estimates, we deduce
\begin{equation}\label{Newestimate2}
\begin{aligned}
\||D^{(1+a)(\theta-1)}(x\partial_x(v^2))\|\les & \|v\|_{H^s}^{\frac{2(\theta_1-1)}{\theta_1}} \big(\|\langle x\rangle^{\theta}v\|^{\frac{4s-1}{2s}}\|v\|_{H^{s}}^{\frac{1}{2s}}\big)^{\frac{1}{\theta_1}}\\
=& \|\langle x\rangle^{\theta}v\|^{\frac{1}{\theta}}\|v\|_{H^{s}}^{\frac{2((4s-1)\theta-2s)+1}{(4s-1)\theta}}.
\end{aligned}    
\end{equation}
We observe that one can use Young's inequality in the deduction of \eqref{Newestimate2} to obtain
\begin{equation*}
\|D^{(1+a)(\theta-1)}(x\partial_x(v^2))\|\les \|\langle x\rangle^{\theta}v\|^2+\|v\|_{H^s}^2.    
\end{equation*}
Plugging \eqref{Newestimate1}, and \eqref{Newestimate2} into \eqref{des1Phi2} establishes 
\begin{equation}\label{des1Phi3}
\begin{split}
\||x|^\theta(\Phi(v)(t)-U_\mu (t)\phi)\| 
&\leq  cT_1\langle T_1\rangle^2 \Big(\sup_{t\in [0,T_1]} \big(\|v(t)\|_{H^s}+\||x|^{\theta}v(\tau)\|\big)\Big)^2.
\end{split}
\end{equation}
Similarly,
\begin{equation*}
\begin{aligned}
\||x|^{\theta}\big(\Phi(v)(t)&-\Phi(v_1)(t)\big)\|\\
\leq cT_1\langle T_1\rangle^2&\Big(\sup_{t\in[0,T_1]}\big(\|v(t)\|_{H^s}+\||x|^{\theta}v(t)\|_{H^s}\big)+\sup_{t\in[0,T_1]}\big(\|v_1(t)\|_{H^s}+\||x|^{\theta}v_1(t)\|\big)\Big)\\
&\qquad \times\big(\sup_{t\in[0,T_1]}\|v(t)-v_1(t)\|_{H^s}+ \sup_{t\in[0,T_1]}\||x|^{\theta}\big(v(t)-v_1(t)\big)\|\big).   
\end{aligned}
\end{equation*}
It remains to estimate $|x|^{\theta-1}D^{1+a}\Phi(v)$, and $D^{(1+a)(\theta-1)}(x\Phi)(v)$. An application of the first inequality in Corollary \ref{Dmu}, together with \eqref{eqparabolicarg2} and \eqref{Newestimate1} yield
\begin{equation}\label{desPhi4}
\begin{aligned}
 \||x|^{\theta-1}&D^{1+a}(\Phi(v)(t)-U_\mu (t)\phi)\|\\
 \leq &\int_0^{t}\||x|^{\theta-1} U_{\mu}(t-\tau)D^{1+a}\partial_x(v^2)(\tau)\|\, d\tau\\
 \leq  & c\int_0^{t} \langle t-\tau\rangle\big(\|D^{(1+a)\theta}\partial_x(v^2)(\tau)\|+ \||x|^{\theta-1} D^{1+a}\partial_x(v^2)(\tau)\|\big)\, d\tau\\
 \leq & cT_1\langle T_1\rangle^2 \Big(\sup_{t\in [0,T_1]} \big(\|v(t)\|_{H^s}+\||x|^{\theta}v(\tau)\|\big)\Big)^2.
\end{aligned}    
\end{equation}
Using the identity
\begin{equation*}
  x U_{\mu}(t)f=U_{\mu}(t)\big(x-(2+a)t D^{1+a}-2\mu t \partial_x\big)f,
\end{equation*}
Corollary \ref{Dmu}, and \eqref{regulineq}, we deduce
\begin{equation}\label{desPhi5}
\begin{aligned}
 \|D^{(1+a)(\theta-1)} &\big(x(\Phi(v)(t)-U_\mu (t)\phi)\big)\|\\
 \leq &\int_0^{t}\|D^{(1+a)(\theta-1)}\Big( x U_{\mu}(t-\tau)\partial_x(v^2)(\tau)\Big)\|\, d\tau\\
 \leq & c\int_0^{t}\langle t-\tau\rangle \Big(\|D^{(1+a)(\theta-1)}(x\partial_x(v^2))(\tau)\|+\|D^{(1+a)\theta}\partial_x(v^2)(\tau)\|\\
 &+ \mu^{\frac{1}{2}}(t-\tau)^{-\frac{1}{2}} \|D^{(1+a)(\theta-1)}\partial_x(v^2)(\tau)\|\Big)\, d\tau\\
 \leq & cT_1^{\frac{1}{2}}\langle T_1\rangle^{\frac{3}{2}} \Big(\sup_{t\in [0,T_1]} \big(\|v(t)\|_{H^s}+\||x|^{\theta}v(\tau)\|\big)\Big)^2,
\end{aligned}    
\end{equation}
where we have also used \eqref{Newestimate2} and \eqref{eqparabolicarg2}. Summarizing
\begin{equation*}
\begin{aligned}
 \||x|^{\theta-1}D^{1+a}(\Phi(v)(t)-U_\mu (t)\phi)\| &+ \|D^{(1+a)(\theta-1)} \big(x(\Phi(v)(t)-U_\mu (t)\phi)\big)\| \\
 \leq & cT_1^{\frac{1}{2}}\langle T_1\rangle^{\frac{5}{2}} \Big(\sup_{t\in [0,T_1]} \big(\|v(t)\|_{H^s}+\||x|^{\theta}v(\tau)\|\big)\Big)^2,
\end{aligned}    
\end{equation*}
and in a similar fashion
\begin{equation*}
\begin{aligned}
 \||x|^{\theta-1} & D^{1+a}(\Phi(v)(t)-\Phi(v_1)(t))\| + \|D^{(1+a)(\theta-1)} \big(x(\Phi(v)(t)-\Phi(v_1)(t)\phi\big)\| \\
 \leq & cT_1^{\frac{1}{2}}\langle T_1\rangle^{\frac{5}{2}}\Big(\sup_{t\in[0,T_1]}\big(\|v(t)\|_{H^s}+\||x|^{\theta}v(t)\|_{H^s}\big)+\sup_{t\in[0,T_1]}\big(\|v_1(t)\|_{H^s}+\||x|^{\theta}v_1(t)\|\big)\Big)\\
&\qquad \times\big(\sup_{t\in[0,T_1]}\|v(t)-v_1(t)\|_{H^s}+ \sup_{t\in[0,T_1]}\||x|^{\theta}\big(v(t)-v_1(t)\big)\|\big).   
\end{aligned}    
\end{equation*}
Consequently, gathering our previous estimates,  and following a similar argument to that given in the previous cases $0<\theta\leq 1$, it follows that there exists $T_1=T_1(\mu, \phi)>0$ sufficiently small such that  $\Phi:\overline{\mathcal{X}}_{T_1,\theta}^s\to \overline{\mathcal{X}}_{T_1,\theta}^s$ is a contraction. Thus, for each $\mu>0$, there exists a unique solution of the integral formulation of \eqref{mugbo} in the space $\overline{\mathcal{X}}_{T_1,\theta}^s$, but such a solution is also a solution of \eqref{mugbo} in the class $C([0,T_1];H^s_{a,\theta}(\mathbb{R}))$. Uniqueness forces then $u_\mu\in C([0,T^{\ast}];\mathcal{Z}_{s,\theta}^{a,\theta})$, where $T^{\ast}=\min\{T,T_1\}$. Next, we extend, if necessary, to the whole time interval $[0,T]$, and take $\mu\to 0$.

The ideas in \eqref{des1Phi}, together with \eqref{Newestimate1} and \eqref{Newestimate2} establish
\begin{equation}\label{aftergronwall2}
\begin{split}
\||x|^\theta u_\mu(t)\|\leq &  \; \||x|^{\theta}U_\mu (t)\phi\|+\int_0^t \||x|^{\theta}U_\mu (t-\tau)\p_x u_{\mu}^2\|\, d\tau\\
\leq & c\langle T\rangle^2\Big(\|\phi\|_{Z_{s,\theta}^{a,\theta}}+\||x|^{\theta-1}D^{1+a}\phi\|+\|D^{(1+a)(\theta-1)}(x\phi)\|\big)\\
&+cT\langle T\rangle^2\big(\sup_{t\in [0,T]}\rho(t)+\big(\sup_{t\in [0,T]}\rho(t)\big)^{\frac{2((4s-1)\theta-2s)+1}{(4s-1)(\theta-1)}}\Big)\\
&+c\langle T \rangle^2\big(1+\sup_{t\in [0,T]}\rho(t)^{\frac{1}{2}}\big)\int_0^t\||x|^{\theta}u_{\mu}(\tau)\|\, d\tau.
\end{split}
\end{equation}
The arguments in \eqref{desPhi4}, \eqref{Newestimate1} and \eqref{Newestimate2} show
\begin{equation}\label{aftergronwall3}
\begin{split}
\||x|^{\theta-1}D^{1+a} u_\mu(t)\|\leq &  \; \||x|^{\theta-1} D^{1+a} U_\mu (t)\phi\|+\int_0^t \||x|^{\theta-1} D^{1+a} U_\mu (t-\tau)\p_x u_{\mu}^2\|\, d\tau\\
\leq & c\langle T\rangle \|\phi\|_{\mathcal{Z}_{s,\theta}^{a,\theta}}+cT\langle T\rangle \big(\sup_{t\in [0,T]}\rho(t)+\big(\sup_{t\in [0,T]}\rho(t)\big)^{\frac{2((4s-1)\theta-2s)+1}{(4s-1)(\theta-1)}}\Big)\\
&+c\langle T \rangle\big(1+\sup_{t\in [0,T]}\rho(t)^{\frac{1}{2}}\big)\int_0^t\||x|^{\theta}u_{\mu}(\tau)\|\, d\tau.
\end{split}
\end{equation}
Finally, using \eqref{desPhi5},  \eqref{Newestimate1} and \eqref{Newestimate2}, we find
\begin{equation}\label{aftergronwall4}
\begin{split}
\|D^{(1+a)(\theta-1)} &\big(x u_\mu\big)(t)\|\\
\leq &  \||D^{(1+a)(\theta-1)}\big(x U_\mu (t)\phi\big)\|+\int_0^t \||D^{(1+a)(\theta-1)}\big(x U_\mu (t-\tau)\p_x u_{\mu}^2\big)\|\, d\tau\\
\leq & c\langle T\rangle \|\phi\|_{\mathcal{Z}_{s,\theta}^{a,\theta}}+cT^{\frac{1}{2}}\langle T\rangle^{\frac{3}{2}}\big(\sup_{t\in [0,T]}\rho(t)+\big(\sup_{t\in [0,T]}\rho(t)\big)^{\frac{2((4s-1)\theta-2s)+1}{(4s-1)(\theta-1)}}\Big)\\
&+c\langle T \rangle \int_0^t\||x|^{\theta}u_{\mu}(\tau)\|\, d\tau.
\end{split}
\end{equation}
Consequently, defining
\begin{equation}
   \mathcal{G}(t):=\||x|^\theta u_\mu(t)\|+\||x|^{\theta-1}D^{1+a} u_\mu(t)\|+\||x|^{\theta-1}D^{1+a} u_\mu(t)\|,
\end{equation}
the previous inequalities establish
\begin{equation*}
\begin{aligned}
 \mathcal{G}(t)\leq  & c\langle T\rangle^2 \|\phi\|_{\mathcal{Z}_{s,\theta}^{a,\theta}}+cT^{\frac{1}{2}}\langle T\rangle^{\frac{5}{2}}\big(\sup_{t\in [0,T]}\rho(t)+\big(\sup_{t\in [0,T]}\rho(t)\big)^{\frac{2((4s-1)\theta-2s)+1}{(4s-1)(\theta-1)}}\Big)\\
&+c\langle T \rangle^2\big(1+\sup_{t\in [0,T]}\rho(t)^{\frac{1}{2}}\big)\int_0^t \mathcal{G}(\tau)\, d\tau,    
\end{aligned}
\end{equation*}
for all $t\in [0,T_1]$.  Gronwall's inequality now yields
\begin{equation}\label{uniformbound2}
\||x|^\theta u_\mu(t)\|+\||x|^{\theta-1}D^{1+a} u_\mu(t)\|+\||x|^{\theta-1}D^{1+a} u_\mu(t)\|\leq C(\phi,T)<\infty,
\end{equation}
for each $t\in [0,T_1]$, a constant $C(\phi,T)>0$ independent of $\mu$ and $T_1>0$. Since we already know that $u_{\mu}\in C([0,T];H^s_{a,\theta}(\mathbb{R}))$, \eqref{uniformbound2} shows that $u_{\mu}$ can be extended to the class $C([0,T];\mathcal{Z}_{s,\theta}^{a,\theta})$, and \eqref{uniformbound2} holds for $t\in [0,T]$. Convergence $u_{\mu}\to u$ in $C([0,T];L^2(\mathbb{R}))$ implies that taking $\mu\to 0^{+}$ in \eqref{uniformbound2} one infers
\begin{equation*}
u\in L^{\infty}([0,T];\mathcal{Z}_{s,\theta}^{a,\theta}).
\end{equation*}
Since the mapping $t\mapsto U(t)\phi\in \mathcal{Z}_{s,\theta}^{a,\theta}$ is continuous, the previous fact and using similar estimates to \eqref{aftergronwall2}, \eqref{aftergronwall3} and \eqref{aftergronwall4}, we deduce $u\in C([0,T];\mathcal{Z}_{s,\theta}^{a,\theta})$. Finally, the continuous dependence can be obtained using a differential inequality similar to that obtained for $\mathcal{G}(t)$ (i.e., arguing as in \eqref{aftergronwall2}-\eqref{aftergronwall4}), together with the continuous dependence on $H^s_{a,\theta}(\mathbb{R})$ given by Lemma \ref{existence}. To avoid repetitions, we omit further details.


\underline{\bf Assume that $\frac{-3}{2(1+a)}\leq \theta<\frac{1+2a}{2(1+a)}$}. 
An inspection of the arguments in Remark \ref{remarkona}, tells us that $\theta<\frac{-3}{2(1+a)}$ appears to be a candidate to be the decay limit allowed by solutions of \eqref{gbo}. However, we will show that the integral formulation of \eqref{gbo} allows larger decay as long as it is less than $\frac{1+2a}{2(1+a)}$. Since the previous approximation argument with \eqref{mugbo} focuses on condition $\theta<\frac{-3}{2(1+a)}$, in this part, we must use a different strategy to achieve the desired results. Firstly, we show that the integral formula allows faster decay than the linear part. Since such a result is valid for arbitrary $0<\theta<\frac{1+2a}{2(1+a)}$, we will establish a more general result.

\begin{proposition}\label{extradecaycond}
Let $-\frac{5}{2}<a<-2$, $0<\theta<\frac{1+2a}{2(1+a)}$. Consider the following restriction on $s$: $s>2$ if $0<\theta<1$, $s\geq\max\{\frac{3}{(5+2a)},2^{+}\}$, if $1\leq \theta<-\frac{3}{2(1+a)}$,  and $s\geq \max\{-\frac{9+6a}{2(5+2a)},2^{+}\}$, if  $-\frac{3}{2(1+a)}\leq \theta<\frac{1+2a}{2(1+a)}$. Let $u\in C([0,T];H^s(\mathbb{R}))\cap L^{\infty}([0,T];L^2(|x|^{2\theta}\, dx))$.  Then there exists $0<\theta<\theta_1<\frac{1+2a}{2(1+a)}$ such that
\begin{equation*}
\int_{0}^t U(t-\tau)(\partial_x u^2)(\tau)\, d\tau\in L^{\infty}([0,T]; L^2(|x|^{2\theta_1}\, dx)).    
\end{equation*}    
In the case $\theta=\Big(-\frac{3}{2(1+a)}\Big)^{-}=-\frac{3}{2(1+a)}-\epsilon$, for $\epsilon>0$ is sufficiently small and $s\geq \max\{-\frac{9+6a}{2(5+2a)},2^{+}\}$, one can take $\theta_1=-\frac{3}{2(1+a)}$. Moreover, in the case, $-\frac{3}{2(1+a)}\leq \theta<\frac{1+2a}{2(1+a)}$, one can take any $\theta_1$ such that $\theta<\theta_1<\min\{\frac{2(s-1)}{s}\theta,\frac{1+2a}{2(1+a)}\}$.
\end{proposition}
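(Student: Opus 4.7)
The plan is to bound the weighted norm by applying Lemma~\ref{Dthetau} to each slice of the integrand and then using Minkowski's inequality in $\tau$:
\begin{equation*}
\Big\||x|^{\theta_1}\int_0^t U(t-\tau)\partial_x u^2(\tau)\,d\tau\Big\|\leq \int_0^t \||x|^{\theta_1}U(t-\tau)\partial_x u^2(\tau)\|\,d\tau.
\end{equation*}
Lemma~\ref{Dthetau} dominates the integrand by $\|D^{(1+a)\theta_1}\partial_x u^2\|+\|\langle x\rangle^{\theta_1}\partial_x u^2\|$ (plus $\||x|^{\theta_1-1}D^{1+a}\partial_x u^2\|+\|D^{(1+a)(\theta_1-1)}(x\partial_x u^2)\|$ when $\theta_1>1$), multiplied by polynomial factors in $\langle t-\tau\rangle$. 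Thus the task reduces to controlling these three or five quantities uniformly in $\tau\in[0,T]$ in terms of $\|u(\tau)\|_{H^s}$ and $\|\langle x\rangle^{\theta}u(\tau)\|$ only.

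For the weighted piece, I would write $\partial_x u^2=2u\partial_x u$ and apply H\"older's inequality together with the Sobolev embedding $H^{1/4}(\mathbb{R})\hookrightarrow L^4(\mathbb{R})$:
\begin{equation*}
\|\langle x\rangle^{\theta_1}u\partial_x u\|_{L^2}\les \|J^{1/4}(\langle x\rangle^{\theta_1/2}u)\|\,\|J^{5/4}(\langle x\rangle^{\theta_1/2}u)\|,
\end{equation*}
followed by Lemma~\ref{interx} with $\alpha=s$, $b=\theta$ and $\beta$ tuned so that $(1-\beta)\theta=\theta_1/2$. This produces a bilinear bound of the form $\|\langle x\rangle^{\theta}u\|^{p}\|u\|_{H^s}^{2-p}$, and is precisely the step that forces $\theta_1<\tfrac{2(s-1)}{s}\theta$, reflecting the fact that $u^2$ absorbs essentially twice the decay of $u$ in $L^2$ once $\|u\|_{L^\infty}$ is controlled by $\|u\|_{H^s}$.

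The heart of the argument is the estimate $\|D^{(1+a)\theta_1}\partial_x u^2\|=\||\xi|^{(1+a)\theta_1+1}\widehat{u^2}\|$. The high-frequency part $|\xi|\geq 1$ is immediate since $(1+a)\theta_1+1<0$, giving a bound $\les\|u^2\|\les\|u\|_{L^\infty}\|u\|$. For low frequencies set $\gamma:=-((1+a)\theta_1+1)$. When $\theta_1\leq -3/(2(1+a))$ (equivalently $\gamma\leq 1/2$) the one-dimensional Hardy--Littlewood--Sobolev inequality yields
\begin{equation*}
\||\xi|^{-\gamma}\widehat{u^2}\|_{L^2(|\xi|\leq 1)}\les \|u^2\|_{L^{2/(1+2\gamma)}}\les \|u\|_{L^\infty}^{\alpha}\|u\|^{2-\alpha},
\end{equation*}
and the endpoint HLS at $\gamma=1/2$ supplies the critical jump $\theta_1=-3/(2(1+a))$ when $\theta=(-3/(2(1+a)))^{-}$. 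For $\theta_1>-3/(2(1+a))$ HLS fails, and this is the main obstacle. The plan is to exploit mass conservation $\widehat{u^2}(0,\tau)=\|u(\tau)\|^2=\|\phi\|^2$ and split at the level of the \emph{full} Duhamel integral: $\widehat{u^2}(\xi,\tau)=\|\phi\|^2+R(\xi,\tau)$ with $R(0,\tau)=0$. The constant part contributes $\|\phi\|^2|\xi|^{-(1+a)}(e^{it\xi|\xi|^{1+a}}-1)$ to $\widehat{I}$, which after a low-frequency cutoff $\varphi$ equals $\|\phi\|^2[U(t)D^{-(1+a)}\varphi^{\vee}-D^{-(1+a)}\varphi^{\vee}]$; applying Lemma~\ref{Dthetau} with weight $\theta_1$ then reduces its $L^2(|x|^{2\theta_1})$-bound to the scalar quantity $\|D^{(1+a)(\theta_1-1)}\varphi^{\vee}\|$, which is finite exactly when $\theta_1<\tfrac{1+2a}{2(1+a)}$---the sharp threshold. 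The remainder $R$ satisfies $|R(\xi,\tau)|\les|\xi|^{\min(\theta,1)}\|u(\tau)\|\,\|\langle x\rangle^{\theta}u(\tau)\|$ (a standard consequence of $\langle x\rangle^{\theta}u^2\in L^1$), and together with the automatic inequality $\theta\geq -2-a$ valid in the regime $\theta\geq -3/(2(1+a))$ this produces the desired $L^2$-bound for $|\xi|^{-\gamma}R$ on $|\xi|\leq 1$.

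Finally, the additional quantities $\||x|^{\theta_1-1}D^{1+a}\partial_x u^2\|$ and $\|D^{(1+a)(\theta_1-1)}(x\partial_x u^2)\|$ appearing when $\theta_1>1$ are handled by the same toolbox already deployed in the proof of Theorem~\ref{lwpw}(ii): Proposition~\ref{Leibnitz} for the fractional Leibniz on $\mathcal{D}^{b}$, Proposition~\ref{localdecaylemma} to treat localized multipliers of the form $|\xi|^{2+a}\sgn(\xi)\varphi$ near $\xi=0$, Plancherel to convert the weight $x$ into the local derivative $\partial_\xi$, and Lemma~\ref{interx} to trade decay against regularity. Collecting all the pointwise-in-$\tau$ bounds and integrating over $[0,t]$ completes the proof. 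The whole argument is driven by the single hard step described above, namely the low-frequency analysis of $D^{(1+a)\theta_1}\partial_x u^2$ in the regime $\theta_1\geq -3/(2(1+a))$, whose outcome dictates both the upper bound $\theta_1<\tfrac{1+2a}{2(1+a)}$ and the lower bound on $s$.
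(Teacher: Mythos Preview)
Your overall strategy is sound and reaches the same conclusion as the paper, but the organization differs in an interesting way and there are two points that need fixing.

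\textbf{Structural comparison.} In the hard regime $\theta_1>-\tfrac{3}{2(1+a)}$ the paper does \emph{not} apply Lemma~\ref{Dthetau} at weight $\theta_1$. Instead it first invokes the identity $xU(t)f=U(t)(xf-(2+a)tD^{1+a}f)$ to reduce to weight $\theta_1-1\in(0,1)$; this produces an extra factor $(t-\tau)D^{1+a}$ inside the integrand, and it is \emph{that} term that is decomposed into $\mathcal A_1+\mathcal A_2+\mathcal A_3$ with $\mathcal A_1$ integrated explicitly. Your route---decompose $\widehat{u^2}=\|\phi\|^2+R$ directly, integrate the constant piece of the \emph{original} Duhamel term to get $\|\phi\|^2|\xi|^{-(1+a)}\varphi(e^{it\xi|\xi|^{1+a}}-1)$, then bound $|x|^{\theta_1}$ of each piece---is a legitimate alternative and is arguably more direct. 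Both arguments rest on the identical mechanism (mass conservation makes $\widehat{u^2}(0,\tau)$ constant in $\tau$, so the time integral is computable in closed form) and both produce the sharp threshold $\theta_1<\tfrac{1+2a}{2(1+a)}$.

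\textbf{Logical wrinkle.} Your opening paragraph commits to Minkowski plus Lemma~\ref{Dthetau} applied slice-by-slice, ``reducing to controlling these quantities uniformly in $\tau$''. But $\|D^{(1+a)\theta_1}\partial_x u^2(\tau)\|$ is genuinely infinite for each fixed $\tau$ once $\theta_1>-\tfrac{3}{2(1+a)}$ (because $\widehat{u^2}(0)\neq 0$), so you cannot apply Minkowski first and then switch strategies for one of the resulting terms. The correct order is the reverse: decompose $\partial_x u^2$ into constant plus remainder \emph{before} touching the Duhamel integral; apply Lemma~\ref{Dthetau} slice-by-slice only to the remainder (all four terms are then finite, since $\widehat R(0)=0$); and treat the constant piece by the explicit computation. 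This is a matter of presentation, not of substance, but as written the argument is circular.

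\textbf{Two specific errors.} First, the sentence ``the endpoint HLS at $\gamma=1/2$ supplies the critical jump $\theta_1=-3/(2(1+a))$'' is false: one-dimensional HLS fails at the endpoint $\gamma=1/2$ (it would require $\dot H^{1/2}\hookrightarrow L^\infty$). The paper covers the borderline value $\theta_1=-\tfrac{3}{2(1+a)}$ by exactly the same $\mathcal A_1,\mathcal A_2,\mathcal A_3$ machinery as the supercritical range; your own constant/remainder decomposition would also cover it, so just delete the HLS claim and extend the decomposition argument downward. Second, the assertion that Lemma~\ref{Dthetau} applied to $U(t)D^{-(1+a)}\varphi^{\vee}-D^{-(1+a)}\varphi^{\vee}$ ``reduces to the scalar quantity $\|D^{(1+a)(\theta_1-1)}\varphi^{\vee}\|$'' is an oversimplification: for $\theta_1>1$ the lemma yields four terms, and you must also verify $\langle x\rangle^{\theta_1}D^{-(1+a)}\varphi^{\vee}\in L^2$ and $D^{(1+a)(\theta_1-1)}(xD^{-(1+a)}\varphi^{\vee})\in L^2$. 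Both do hold under $\theta_1<\tfrac{1+2a}{2(1+a)}$ (the first because $|\xi|^{-(1+a)}\varphi\in H^{\theta_1}$ when $\theta_1<-a-\tfrac12$, which is implied by the threshold), but they need to be stated and checked.
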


\begin{proof}
 When $0<\theta <  1$, following the ideas in \eqref{des1Phi}, which depend on Lemma \ref{Dthetau} and  Remark \ref{remarkona}, it is enough to show that there exists $0<\theta<\theta_1<1$ for which
 \begin{equation}\label{eqclaim1}
 \partial_x (u^2)\in L^{\infty}([0,T]; L^2(|x|^{2\theta_1}\, dx)).    
 \end{equation}
Now, setting $\theta<\theta_1<\min\{1,\frac{2(s-1)}{s}\theta\}$, since $H^1(\mathbb{R})$ is a Banach algebra,  complex interpolation shows 
\begin{equation}\label{eqclaim1.1}
\begin{aligned}
\|\langle x\rangle^{\theta_1}\partial_x(u^2)\|\les \|\big(\langle x\rangle^{\frac{(s-1)\theta}{s}}u\big)^2\|_{H^1}\les & \|\langle x\rangle^{\frac{(s-1)\theta}{s}}u\|_{H^1}^2   \\
\les &  \|\langle x\rangle^{\theta}u\|^2+\|J^s u\|^2.
\end{aligned}    
\end{equation}
This  establishes \eqref{eqclaim1}, and in turn the case $0<\theta < 1$. 

Next, we deal with the case $1\leq \theta<-\frac{3}{2(1+a)}$. By the argument in  estimate \eqref{des1Phi2}, Remark \ref{remarkona}, and the previous case, we are reduced to show that there exists $\theta<\theta_1<\min\{\frac{1+2a}{2(1+a)}, \frac{2(s-1)}{s}\theta\}$ such that
\begin{equation}\label{eqclaim1.0}
|x|^{\theta_1-1}D^{1+a}\partial_x (u^2),\, \, D^{(1+a)(\theta_1-1)}(x\partial_x (u^2))\in L^{\infty}([0,T]; L^2(\mathbb{R})).    
 \end{equation}
For any  $\theta<\theta_1<\min\{-\frac{3}{2(1+a)}, \frac{2(s-1)}{s}\theta\}$, by similar consideration in the deduction of \eqref{Newestimate1} (for more details see, \eqref{Newestimate0.1}), and \eqref{eqclaim1.1} we find
\begin{equation*}
\begin{aligned}
   \||x|^{\theta_1-1}D^{1+a}\partial_x(u^2)\|\les &\|u\|_{H^s}^2+\|\langle x \rangle^{\theta_1-\frac{1}{2}}u^2\|\\
  \les &\|u\|_{H^s}^2+\|\big(\langle x \rangle^{\frac{(s-1)\theta}{s}}u\big)^2\|_{H^1} \\
  \les &\|\langle x\rangle^{\theta}u\|^2+\|J^s u\|^2.
\end{aligned}
\end{equation*}
As in \eqref{Newestimate2} and  \eqref{Newestimate1.1},
\begin{equation*}
 \begin{aligned}
  \|D^{(1+a)(\theta_1-1)}(x\partial_x(u^2))\|\les &\|u\|_{H^s}^2+\|J_{\xi}\big(\langle \xi \rangle^{{(1+a)(\theta_1-1)+1}}\widehat{u^2}\big)\|\\
\les &\|u\|_{H^s}^2+\|\langle \xi\rangle^{\frac{3((1+a)(\theta_1-1)+1)}{(5+2a)}}\widehat{u^2}\|+\|J_{\xi}^{-\frac{3}{2(1+a)}}\widehat{u^2}\|,
 \end{aligned}   
\end{equation*}
where we have also used interpolation Lemma \ref{interx}. Hence, for any $1\leq \theta<\theta_1<\min\{-\frac{3}{2(1+a)}, \frac{2(s-1)}{s}\theta\}$, we have  $\frac{3((1+a)(\theta_1-1)+1)}{(5+2a)}\leq \frac{3}{(5+2a)}\leq s$. Now, changing to the frequency domain and using interpolation, a similar argument to \eqref{Newestimate1.2} establishes
\begin{equation*}
\begin{aligned}
\|J_{\xi}^{-\frac{3}{2(1+a)}}\widehat{u^2}\|\sim \|\langle x\rangle^{-\frac{3}{2(1+a)}}u^2\|\les \|\langle x\rangle^{\theta}u\|^2+\|J^{\frac{(1+a)\theta}{4(1+a)\theta+3}}u\|^2.    
\end{aligned}    
\end{equation*}
Since $\frac{(1+a)\theta}{4(1+a)\theta+3}\leq 2<s$, the previous estimates allow us to conclude
\begin{equation*}
 \begin{aligned}
  \|D^{(1+a)(\theta_1-1)}(x\partial_x(u^2))\|\les\|\langle x\rangle^{\theta}u\|_{L^2}^2+\|J^s u\|_{L^2}^2.
 \end{aligned}   
\end{equation*}
This completes the proof of the present case.

We continue with the remaining case $-\frac{3}{2(1+a)}\leq \theta<\frac{1+2a}{2(1+a)}$. Here, we assume that $\max\{-\frac{9+6a}{2(5+2a)},2^{+}\}\leq s$. Since $\theta>1$, in view of the identity $xU(t)f=U(t)(xf-(2+a)tD^{1+a}f)$, we are reduced to show that there exists $\theta<\theta_1<\frac{1+2a}{2(1+a)}$ such that 
\begin{equation}\label{thingstoprove0}
   \int_0^{t} U(t-\tau)(x\partial_x u^2-(2+a)(t-\tau)\partial_x D^{1+a}u^2)(\tau)\, d\tau\in  L^{\infty}([0,T];L^2(|x|^{2(\theta_1-1)}\, dx)). 
\end{equation}
Taking $\theta<\theta_1<\min\{\frac{2(s-1)}{s}\theta,\frac{1+2a}{2(1+a)}\}$ fixed, we will show that \eqref{thingstoprove0} holds for this weight side $\theta_1$. Firstly, we will deduce that 
\begin{equation*}
   \int_0^{t} U(t-\tau)(x\partial_x u^2)(\tau)\, d\tau\in  L^{\infty}([0,T];L^2(|x|^{2(\theta_1-1)}\, dx)). 
\end{equation*}
Since $-\frac{3}{2(1+a)}<\theta_1<\frac{1+2a}{2(1+a)}$ implies that $0<\theta_1-1<1$, using Lemma \ref{Dthetau}, it follows that the previous statement holds if we show 
\begin{equation}\label{thingstoprove}
 \langle x\rangle^{\theta_1} \partial_x (u^2),\,\, D^{(1+a)(\theta_1-1)}(x\partial_x u^2)\in L^{\infty}([0,T];L^2(\mathbb{R})).    
\end{equation}
When $\theta<\theta_1<2\theta$, arguing as in \eqref{eqclaim1.1}, we deduce
\begin{equation*}
 \|\langle x\rangle^{\theta_1}\partial_x (u^2)\|\les \|\langle x\rangle^{\theta}u\|^2+\|J^{\frac{2\theta}{2\theta-\theta_1}}u\|^2.  
\end{equation*}
Thus, taking $\theta<\theta_1<\frac{2(s-1)}{s}\theta$ with $s>2$, we have $\|J^{\frac{2\theta}{2\theta-\theta_1}}u\|\leq \|u\|_{H^s}$. Following the estimates leading to \eqref{Newestimate2}, we have
\begin{equation*}
\begin{aligned}
   \|D^{(1+a)(\theta_1-1)}(x\partial_x(u^2))\|\les &  \|u\|_{H^s}^2+\|J_{\xi}\big(\langle \xi \rangle^{(1+a)(\theta_1-1)+1}\widehat{u^2}\big)\|\\
   \les &  \|u\|_{H^s}^2+\|J^{\frac{\theta_1((1+a)(\theta_1-1)+1)}{\theta_1-1}}u^2\|+\|\langle x \rangle^{\theta_1} u^2\|.
\end{aligned}    
\end{equation*}
Since the function $f(\theta_1)=\frac{\theta_1((1+a)(\theta_1-1)+1)}{\theta_1-1}$ is non-increasing, $f(\theta_1)\leq f(\frac{-3}{2(1+a)})=-\frac{9+6a}{2(5+2a)}\leq s$. Thus, we conclude 
\begin{equation*}
      \|D^{(1+a)(\theta_1-1)}(x\partial_x(u^2))\| \les  \|u\|_{H^s}^2+\|\langle x \rangle^{\theta} u\|^2. 
\end{equation*}
Given that $u\in C([0,T];H^s(\mathbb{R}))\cap L^{\infty}([0,T];L^2(|x|^{2\theta}\, dx))$, the estimates deduced above imply \eqref{thingstoprove}. 
Going back to \eqref{thingstoprove0}, it only remains to show
\begin{equation}\label{thingstoprove2}
   \int_0^{t} U(t-\tau)((t-\tau)\partial_x D^{1+a}u^2)(\tau)\, d\tau\in  L^{\infty}([0,T];L^2(|x|^{2(\theta_1-1)}\, dx)). 
\end{equation}
Taking the Fourier transform and using a function $\varphi\in C^{\infty}_0(\mathbb{R})$ with $\varphi(\xi)=1$, when $|\xi|\leq 1$, we write
\begin{equation}\label{decompositionApart}
\begin{aligned}
\int_0^{t} &e^{i(t-\tau)\xi|\xi|^{1+a}}(t-\tau)\xi|\xi|^{1+a}\widehat{u^2}(\tau)\, d\tau\\
=&\Big(\int_0^{t}e^{i(t-\tau)\xi|\xi|^{1+a}}(t-\tau) \xi|\xi|^{1+a}\varphi(\xi)\widehat{u^2}(0,\tau)\, d\tau \Big)\\
&+\int_0^{t}e^{i(t-\tau)\xi|\xi|^{1+a}}(t-\tau)\xi|\xi|^{1+a}\varphi(\xi)\big(\widehat{u^2}(\xi,\tau)-\widehat{u^2}(0,\tau)\big)\, d\tau\\
&+\int_0^{t}e^{i(t-\tau)\xi|\xi|^{1+a}}(t-\tau)\xi|\xi|^{1+a}(1-\varphi(\xi))\widehat{u^2}(\xi,\tau)\, d\tau\\
=:&\mathcal{A}_1+\mathcal{A}_2+\mathcal{A}_3.
\end{aligned}   
\end{equation}
We have that \eqref{thingstoprove2} is equivalent to shows that $\mathcal{A}_1, \mathcal{A}_2, \mathcal{A}_3 \in L^{\infty}([0,T];H^{\theta_1-1}_{\xi}(\mathbb{R}))$. Using \eqref{Leibh} and the fact that $a<-2$ implies that the function $e^{i(t-\tau)\xi|\xi|^{1+a}}\xi|\xi|^{1+a}$ is bounded with bounded derivative outside  the origin, we get 
\begin{equation*}
\begin{aligned}
\|D^{\theta_1-1}\mathcal{A}_3\|\les & \int_0^{T}\Big(\|\mathcal{D}^{\theta_1-1}\big(e^{i(t-\tau)\xi|\xi|^{1+a}}(t-\tau)\xi|\xi|^{1+a}(1-\varphi(\xi))\big)\widehat{u^2}(\xi,\tau)\|\\
&\qquad+\|(t-\tau)\xi|\xi|^{1+a}(1-\varphi(\xi))\mathcal{D}^{\theta_1-1}(\widehat{u^2}(\xi,\tau))\|\Big)\, d\tau\\
\les & \int_0^{T}\Big(\|\partial_x\big(e^{i(t-\tau)\xi|\xi|^{1+a}}(t-\tau)\xi|\xi|^{1+a}(1-\varphi(\xi))\big)\|_{L^{\infty}}\\
&\qquad +\|(t-\tau)\xi|\xi|^{1+a}(1-\varphi(\xi))\|_{L^{\infty}}\big)\|J_{\xi}^{\theta_1-1}(\widehat{u^2}(\xi,\tau))\|\, d\tau\\
 \les & \langle T\rangle^2\Big(\sup_{t\in [0,T]}\|u(t)\|_{H^s}^2+\sup_{t\in[0,T]}\|\langle x\rangle^{\theta}u(t)\|^2\Big),
\end{aligned}    
\end{equation*}
where we have also used that $ \|J_{\xi}^{\theta_1-1}(\widehat{u^2}(\tau))\|\sim \|\langle x\rangle^{\theta_1-1}u^2(\tau)\|\les \|u(\tau)\|_{L^{\infty}}\|\langle x\rangle^{\theta}u(\tau)\|$, and Sobolev embedding. Before we deal with $\mathcal{A}_2$, we write
\begin{equation*}
 \begin{aligned}
 \widehat{u^2}(\xi,\tau)-\widehat{u^2}(0,\tau)=\xi\int_0^{1}\frac{\partial}{\partial \xi}\widehat{u^2}(\sigma\xi,\tau)\, d\sigma.    
 \end{aligned}   
\end{equation*}
Then, Sobolev embedding $H^1(\mathbb{R})\hookrightarrow L^{\infty}(\mathbb{R})$ and a change of variables yield
\begin{equation*}
\begin{aligned}
\Big\| \int_0^{1} \frac{\partial}{\partial \xi}\widehat{u^2}(\sigma\xi,\tau)\, d\sigma\Big\|_{L^{\infty}}\les & \int_0^1 \Big(\Big\|\frac{\partial}{\partial \xi}\widehat{u^2}(\sigma\xi,\tau)\Big\|+ \sigma\Big\|\frac{\partial^2}{\partial \xi^2}\widehat{u^2}(\sigma\xi,\tau)\Big\|\Big)\, d\sigma \\
\les & \int_0^1 \Big(\sigma^{-\frac{1}{2}}\Big\|\frac{\partial}{\partial \xi}\widehat{u^2}(\xi,\tau)\Big\|+ \sigma^{\frac{1}{2}}\Big\|\frac{\partial^2}{\partial\xi^2}\widehat{u^2}(\xi,\tau)\Big\|\Big)\, d\sigma.
\end{aligned}    
\end{equation*}
Using Plancherel's identity and similar ideas as in \eqref{Newestimate1.2}, one deduces
\begin{equation}\label{eqintegralestimate}
\begin{aligned}
\Big\|\frac{\partial}{\partial\xi}\widehat{u^2}(\xi,\tau)\Big\|+\Big\|\frac{\partial^2}{\partial\xi^2}\widehat{u^2}(\xi,\tau)\Big\|\les \|\langle x\rangle^{2}u^2\|= &\|\langle x\rangle u\|^{2}_{L^4}\\
\les & \|\langle x\rangle^{\theta} u\|^{\frac{2}{\theta}}\|J^{\frac{\theta}{4(\theta-1)}}u\|^{\frac{2(\theta-1)}{\theta}}.
\end{aligned}    
\end{equation}
Notice that $\theta\geq \frac{-3}{2(1+a)}$ and $s>2$ imply that $\|J^{\frac{\theta}{4(\theta-1)}}u\|\les \|J^{s}u\|$. We conclude that
\begin{equation}\label{integralestimate}
\begin{aligned}
\Big\| \int_0^{1} \frac{\partial}{\partial\xi}\widehat{u^2}(\sigma\xi,\tau)\, d\sigma\Big\|_{L^{\infty}}
\les  \sup_{t\in [0,T]}\|\langle x\rangle^{\theta} u(t)\|^{2}+\sup_{t\in [0,T]}\|u(t)\|_{H^s}^{2}.
\end{aligned}    
\end{equation}
We proceed with the study of $\mathcal{A}_2$. Using Theorem \ref{stein} and the properties of the fractional derivative, we get
\begin{equation*}
 \begin{aligned}
 \|D^{(\theta_1-1)}\big(\mathcal{A}_2\big)\|\les & \int_0^{t}  \Big\|\mathcal{D}^{(\theta_1-1)}\big(e^{i(t-\tau)\xi|\xi|^{1+a}}\big)(t-\tau)|\xi|^{3+a}\varphi(\xi)\Big(\int_0^1\frac{\partial}{\partial\xi}\widehat{u^2}(\sigma\xi,\tau)\, d\sigma\Big) \Big\|\, d\tau \\
 &+\int_0^{t}  \Big\|(t-\tau)D^{(\theta_1-1)}\big(\sgn(\xi)|\xi|^{3+a}\varphi(\xi)\Big(\int_0^1\frac{\partial}{\partial\xi}\widehat{u^2}(\sigma\xi,\tau)\, d\sigma\Big) \big)\Big\|\, d\tau\\
 =:&\mathcal{A}_{2,1}+\mathcal{A}_{2,2}.
 \end{aligned}   
\end{equation*}
Proposition \ref{pontualn}, the fact that $|\xi|^{3+a+(1+a)(\theta_1-1)}\varphi\in L^2(\mathbb{R})$,  and \eqref{integralestimate} yield
\begin{equation*}
\begin{aligned}
\mathcal{A}_{2,1}\les &   \int_0^{t}\|(t-\tau)|\xi|^{3+a+(1+a)(\theta_1-1)}\varphi\|\Big\| \int_0^{1} \frac{\partial}{\partial\xi}\widehat{u^2}(\sigma\xi,\tau)\, d\sigma\Big\|_{L^{\infty}}\, d\tau\\
\les & \langle T\rangle^2\Big( \sup_{t\in [0,T]}\|\langle x\rangle^{\theta} u(t)\|^{2}+\sup_{t\in [0,T]}\|u(t)\|_{H^s}^{2}\Big).    
\end{aligned}
\end{equation*}
To estimate $\mathcal{A}_{2,2}$, we commute with the derivative $D^{(\theta_1-1)}$ to get 
\begin{equation*}
\begin{aligned}
\Big\|D^{(\theta_1-1)}&\left(\sgn(\xi)|\xi|^{3+a}\varphi(\xi)\Big(\int_0^1\frac{\partial}{\partial \xi}\widehat{u^2}(\sigma\xi,\tau)\, d\sigma\Big) \right)\Big\|  \\
\les & \Big\|\left[D^{(\theta_1-1)},\big(\sgn(\xi)|\xi|^{3+a}\varphi(\xi)\big)\right]\Big(\int_0^1\frac{\partial}{\partial\xi}\widehat{u^2}(\sigma\xi,\tau)\, d\sigma\Big) \Big\|\\
&+\Big\||\xi|^{3+a}\varphi(\xi)D^{(\theta_1-1)}\Big(\int_0^1\frac{\partial}{\partial\xi}\widehat{u^2}(\sigma\xi,\tau)\, d\sigma\Big)\Big\|.
\end{aligned}    
\end{equation*}
Let us estimate each factor on the right-hand side of the inequality above. By the commutator estimate in Proposition \ref{comuDs1} and  \eqref{integralestimate}, we infer
\begin{equation*}
\begin{aligned}
\Big\|\left[D^{(\theta_1-1)},\big(\sgn(\xi)|\xi|^{3+a}\varphi(\xi)\big)\right] &\Big(\int_0^1\frac{\partial}{\partial \xi}\widehat{u^2}(\sigma\xi,\tau)\, d\sigma\Big) \Big\|\\
\les & \|D^{(\theta_1-1)}\big(\sgn(\xi)|\xi|^{3+a}\varphi)\|\Big\|\int_0^1\frac{\partial}{\partial \xi}\widehat{u^2}(\sigma\xi,\tau)\, d\sigma\Big\|_{L^{\infty}} \\
\les & \sup_{t\in [0,T]}\|\langle x\rangle^{\theta} u(t)\|^{2}+\sup_{t\in [0,T]}\|u(t)\|_{H^s}^{2},
\end{aligned}    
\end{equation*}
where the fact that $\theta_1-1<(3+a)+\frac{1}{2}$ (recall that $\theta_1<\frac{1+2a}{2(1+a)}$) implies $\mathcal{D}^{(\theta-1)}\big(\sgn(\xi)|\xi|^{3+a}\varphi(\xi)\big)\in L^2(\mathbb{R})$, which is a consequence of \cite[Proposition 2.9]{FLP1}. Now, $3+a>0$ shows us that
\begin{equation*}
\begin{aligned}
\Big\||\xi|^{3+a}\varphi(\xi)D^{(\theta_1-1)}\Big(\int_0^1\frac{\partial}{\partial \xi}\widehat{u^2}(\sigma\xi,\tau)\, d\sigma\Big) \Big\|\les & \||\xi|^{3+a}\varphi\|_{L^{\infty}}\int_0^1 \sigma^{(\theta_1-\frac{3}{2})} \|D^{(\theta_1-1)}\frac{\partial}{\partial \xi}\widehat{u^2}(\xi,\tau)\|\,  d\sigma  \\
\les &\|\langle x\rangle^{\theta_1}u^2(\tau)\|.
\end{aligned}    
\end{equation*}
The estimate above follows as a consequence of \eqref{eqclaim1.1}. Accordingly, collecting the previous results, we deduce
\begin{equation*}
\begin{aligned}
\|D^{\theta_1-1}(\mathcal{A}_2)\|\les  \langle T\rangle^2\Big( \sup_{t\in [0,T]}\|\langle x\rangle^{\theta} u(t)\|^{2}+\sup_{t\in [0,T]}\|u(t)\|_{H^s}^{2}\Big).    
\end{aligned}    
\end{equation*}
Finally, we proceed with the analysis of $\mathcal{A}_1$. The $L^2$-conservation law implies $\widehat{u^2}(0,\tau)=\|u(\tau)\|^2=\|\phi\|^2$, for all $\tau\in[0,T]$. It then follows 
\begin{equation}\label{decompositionApart2}
 \begin{aligned}
 \mathcal{A}_1&= -\frac{\big(t(i\xi|\xi|^{1+a})-1\big)e^{it\xi|\xi|^{1+a}}+1}{\xi|\xi|^{1+a}} \varphi(\xi)\|\phi\|^2\\
 &=-ite^{it\xi|\xi|^{1+a}}\varphi(\xi)\|\phi\|^2-(e^{it\xi|\xi|^{1+a}}-1)(\xi|\xi|^{1+a})^{-1}\varphi(\xi)\|\phi\|^2\\
  &=:-ite^{it\xi|\xi|^{1+a}}\varphi(\xi)\|\phi\|^2-\mathcal{A}_{1,2}.
 \end{aligned}   
\end{equation}
We first show that $\mathcal{A}_{1,2}$ is in the space $H^{\theta_1-1}(\mathbb{R})$ uniformly with respect to $t\in[0,T]$. Let $\widetilde{\varphi} \in C^{\infty}_0(\mathbb{R})$ be such that $\widetilde{\varphi}\varphi=\varphi$ and write
\begin{equation*}
\begin{aligned}
\|\mathcal{D}^{(\theta_1-1)}&\big((e^{it_2\xi|\xi|^{1+a}}-1)(\xi|\xi|^{1+a})^{-1}\widetilde{\varphi(\xi)}\varphi(\xi)\big)\|\\
\les & \|\mathcal{D}^{(\theta_1-1)}\big((e^{it_2\xi|\xi|^{1+a}}-1)\big)(\xi|\xi|^{1+a})^{-1}\varphi(\xi)\|   +\|\mathcal{D}^{(\theta_1-1)}\big((\xi|\xi|^{1+a})^{-1}\widetilde{\varphi}(\xi)\big)\varphi(\xi)\|\\
&+\|(\xi|\xi|^{1+a})^{-1}\widetilde{\varphi}(\xi)\mathcal{D}^{(\theta_1-1)}\big(\varphi(\xi))\|.
\end{aligned}    
\end{equation*}
We follow with the estimates of each factor in the previous inequality. Using Proposition \ref{pontualn}, and that $|(\xi|\xi|^{1+a})^{-1}|\leq |\xi|^{-(2+a)}$, it is seen that
\begin{equation*}
\begin{aligned}
 \|\mathcal{D}^{(\theta_1-1)}\big((e^{it\xi|\xi|^{1+a}}-1)\big)(\xi|\xi|^{1+a})^{-1}\varphi(\xi)\|\les & \langle T \rangle\||\xi|^{(1+a)(\theta_1-1)-(2+a)}\varphi\|
 \les  \langle T \rangle,
\end{aligned}    
\end{equation*}
where we have used that $|\xi|^{(1+a)(\theta_1-1)-(2+a)}\varphi\in L^2(\mathbb{R})$ provided that our conditions assure $(1+a)(\theta_1-1)-(2+a)+\frac{1}{2}>0$. Given that $\theta_1-1<-(2+a)+\frac{1}{2}$, the results in \cite[Proposition 2.9]{FLP1} establish that $\mathcal{D}^{(\theta_1-1)}\big(\sgn(\xi)|\xi|^{-(2+a)}\widetilde{\varphi}(\xi)\big)\varphi(\xi)\in L^2(\mathbb{R})$, thus we get
\begin{equation*}
\begin{aligned}
\|\mathcal{D}^{(\theta_1-1)}\big((\xi|\xi|^{1+a})^{-1}\widetilde{\varphi}(\xi)\big)\varphi(\xi)\| =\|\mathcal{D}^{(\theta_1-1)}\big(\sgn(\xi)|\xi|^{-(2+a)}\widetilde{\varphi}(\xi)\big)\varphi(\xi)\| \les 1.
\end{aligned}    
\end{equation*}
Since $-(2+a)>0$ and $\varphi, \widetilde{\varphi}\in C^{\infty}_0(\mathbb{R})$, we obtain
\begin{equation*}
\begin{aligned}
\|(\xi|\xi|^{1+a})^{-1}\widetilde{\varphi}(\xi)\mathcal{D}^{(\theta_1-1)}\big(\varphi(\xi))\|\les &  \||\xi|^{-(2+a)}\widetilde{\varphi}\|_{L^{\infty}}\|\mathcal{D}^{(\theta_1-1)}\varphi\|
\les  1.  
\end{aligned}    
\end{equation*}
Given that $1<\theta_1<\frac{1+2a}{2(1+a)}$, we can apply Lemma \ref{Dthetau}, and familiar arguments to conclude $\mathcal{D}^{(\theta_1-1)}\big(e^{it\xi|\xi|^{1+a}}\varphi) \in L^2(\mathbb{R})$. This establishes \eqref{thingstoprove2}.

Finally, we remark that the arguments in the last case hold for $\theta=\Big(-\frac{3}{2(1+a)}\Big)^{-}=-\frac{3}{2(1+a)}-\epsilon$, and $\theta_1=-\frac{3}{2(1+a)}$ provided that $s\geq \max\{-\frac{9+6a}{2(5+2a)},2^{+}\}$, and $0<\epsilon \ll 1$. This completes the proof of Proposition \ref{extradecaycond}. 
\end{proof}

We are now in a position to prove Theorem \ref{lwpw} (iii) case $-\frac{3}{2(1+a)}\leq \theta<\frac{1+2a}{2(1+a)}$, $s\geq \max\{-\frac{9+6a}{2(5+2a)},2^{+}\}$. Consider $\phi\in \mathcal{Z}_{s,\theta}^{a,\theta}$. By the results proved in the previous case, i.e.,  Theorem \ref{lwpw} (i) with weights $<\frac{-3}{2(1+a)}$, there exist a time and a unique solution $u$ of \eqref{gbo} in the class
\begin{equation}\label{iterationclass1}
  u\in C(0,T];H^s_{a,\theta_0}(\mathbb{R}))\cap L^{\infty}([0,T];L^2(|x|^{2\theta_0}\, dx)),
\end{equation}
where $\theta_0=\Big(-\frac{3}{2(1+a)}\Big)^{-}$, and we have also used Lemma \ref{existence} to assure that $u$ describes a continuous curve in the class $H^s_{a,\theta_0}(\mathbb{R})$. We also remark that our present assumptions on $s$ show that the conditions of regularity in the case weights $0<\theta<-\frac{3}{2(1+a)}$ hold. It follows from Lemma \ref{Dthetau} and hypothesis on $\phi$ that 
\begin{equation}\label{iterationclass2}
  U(t)\phi\in L^{\infty}([0,T];L^2(|x|^{2\theta}\, dx).  
\end{equation}
By Proposition \ref{extradecaycond} and \eqref{iterationclass1}, we have that 
\begin{equation*}
\int_{0}^t U(t-\tau)(\partial_x u^2)(\tau)\, d\tau\in L^{\infty}([0,T]; L^2(|x|^{2\theta_1}\, dx)),   
\end{equation*}  
with $\theta_1=-\frac{3}{2(1+a)}$. Then the previous remark and \eqref{iterationclass2} yield 
\begin{equation}\label{iterationclass3}
  u\in  C(0,T];H^s_{a,\theta_0}(\mathbb{R}))\cap L^{\infty}([0,T];L^2(|x|^{2\theta_1}\, dx)).
\end{equation}
If $\theta_1=\theta$, we are done. Otherwise, if $\theta_1<\theta$, we consider $\theta_2=\min\left\{\Big(\frac{2(s-1)+s}{2s}\Big)\theta_1,\theta\right\}$ (notice that $\Big(\frac{2(s-1)+s}{2s}\Big)\theta_1$ is the middle point of the interval $(\theta_1,\frac{2(s-1)}{s}\theta_1)$). It follows from Proposition \ref{extradecaycond} that
\begin{equation*}
\int_{0}^t U(t-\tau)(\partial_x u^2)(\tau)\, d\tau\in L^{\infty}([0,T]; L^2(|x|^{2\theta_2}\, dx)),    
\end{equation*}  
which together with \eqref{iterationclass2} show
\begin{equation*}
  u\in  C(0,T];H^s_{a,\theta_0}(\mathbb{R}))\cap L^{\infty}([0,T];L^2(|x|^{2\theta_2}\, dx)).
\end{equation*}
Hence, if $\theta_2=\theta$ we are done. Otherwise, we can iterate the previous argument with the sequence $\theta_j=\min\{\Big(\frac{2(s-1)+s}{2s}\Big)\theta_{j-1},\theta\}$, $j\in \mathbb{Z}^{+}$, $j\geq 2$ to obtain 
\begin{equation*}
  u\in  C(0,T];H^s_{a,\theta_0}(\mathbb{R}))\cap L^{\infty}([0,T];L^2(|x|^{2\theta_j}\, dx)).
\end{equation*}
However, $s\geq \max\{-\frac{9+6a}{2(5+2a)},2^{+}\}$ is independent of $\theta$, and $\frac{2(s-1)+s}{2s}>1$. These facts imply that there has to be a finite number of iterations $J\geq 2$ such that $\theta_j=\theta$ for all $j\geq J$. This completes the proof of the theorem.
\end{proof}


\section{Unique continuation principles}\label{uniquesect}

In this section, we deduce Theorem \ref{Uniquecont1}, and the unique continuation principles stated in Propositions \ref{propostimes1} and \ref{propostimes3}.

\begin{proof}[Proof of Theorem \ref{Uniquecont1}] Under our assumptions, let $u\in C([0,T];H^s(\mathbb{R}))\cap L^{\infty}([0,T];L^2(|x|^{2\theta}\, dx))$ be a solution of \eqref{gbo} with $u(0)=\phi\in H^s(\mathbb{R})$.

Let us start with part (i), where we assume $0<\theta\leq 1$. We will only deduce the most difficult case $0<\theta<1$ as $\theta=1$ follows from our ideas below, and using the local derivative $\frac{d}{d\xi}$ instead of the fractional derivative $\mathcal{D}^{\theta}$. By the decay assumption on $u$, and the integral formulation of \eqref{gbo}, we have
\begin{equation}\label{int}
u(t)=U(t)\phi -\frac12\int_{0}^{t}U(t-\tau)\partial_{x}u^2(\tau)d\tau\in L^{\infty}([0,T];L^2(|x|^{2\theta}\, dx)).
\end{equation}
Now, since $u\in C([0,T];H^s(\mathbb{R}))\cap L^{\infty}([0,T]; L^2(|x|^{2\theta}\, dx))$,  estimates in \eqref{des1Phi0}, \eqref{des1Phi}, \eqref{des1Phi2} and \eqref{des1Phi3} yield
\begin{equation}\label{integralequartiondecay}
\int_{0}^{t}U(t-\tau)\partial_{x}u^2(\tau)d\tau\in L^{\infty}([0,T];L^2(|x|^{2\theta}\, dx)).   
\end{equation}
Hence, it follows from \eqref{int} that
\begin{equation*}
U(t)\phi\in L^2(|x|^{2\theta}\, dx) \, \, \text{ for almost every } \, \, t\in [0,T],
\end{equation*}
which is equivalent to
\begin{equation}\label{conclusDecay}
D^{\theta}\Big(e^{it\xi|\xi|^{1+a}}\widehat{\phi}\Big)\in L^2(\mathbb{R}) \, \, \text{ for almost every } \, \, t\in [0,T].
\end{equation}
Let $t,t_1\in [0,T]$ be such that the above statement holds true, and $t-t_1>0$. We define $\widetilde{\phi}(\xi):=e^{it_1\xi|\xi|^{1+a}}\widehat{\phi}(\xi)$, i.e., $\widetilde{\phi}$ is the Fourier transform of $U(t_1)\phi$.  It follows from \eqref{conclusDecay} that $D^{\theta}(e^{i(t-t_1)\xi|\xi|^{1+a}}\widetilde{\phi}),\, \, D^{\theta}(\widetilde{\phi})\in L^2(\mathbb{R})$. Notice that we have considered two times $t$ and $t_1$ as at this point, we do not know if $D^{\theta}(\widehat{\phi})\in L^2(\mathbb{R})$.

Now, for almost every $\xi$ and $\eta$, we have
\begin{equation*}
\begin{aligned}
|\big(e^{i(t-t_1)\xi|\xi|^{1+a}}-e^{i(t-t_1)\eta|\eta|^{1+a}}\big)\widetilde{\phi}(\xi)|^2\les & |e^{i(t-t_1)\xi|\xi|^{1+a}}\widetilde{\phi}(\xi)-e^{i(t-t_1)\eta|\eta|^{1+a}}\widetilde{\phi}(\eta)|^2  \\
&+ |\widetilde{\phi}(\xi)-\widetilde{\phi}(\eta)|^2,
\end{aligned}    
\end{equation*}
then the above inequality and the definition of the fractional derivative $\mathcal{D}^{\theta}$ imply that for almost every $\xi$,
\begin{equation*}
 |\widetilde{\phi}(\xi)|\mathcal{D}^{\theta}(e^{i(t_1-t_2)\xi|\xi|^{1+a}})(\xi)\les \mathcal{D}^{\theta}(e^{i(t-t_1)\xi|\xi|^{1+a}}\widetilde{\phi})(\xi)+\mathcal{D}^{\theta}(\widetilde{\phi})(\xi).  
\end{equation*}
Given that $D^{\theta}(e^{i(t-t_1)\xi|\xi|^{1+a}}\widetilde{\phi}),\, \, D^{\theta}(\widetilde{\phi})\in L^2(\mathbb{R})$, by Theorem \ref{stein}, and the previous inequality, it must be the case that
\begin{equation}\label{newconclu1}
  |\widetilde{\phi}(\xi)|\mathcal{D}^{\theta}(e^{i(t-t_1)\xi|\xi|^{1+a}})(\xi)\in L^2(\mathbb{R}).   
\end{equation}
However, arguing as in the proof of Proposition \ref{pontualn}, we can find a constant $0<c<1$ for which 
\begin{equation}\label{newconclu2}
 \mathcal{D}^{\theta}(e^{i(t-t_1)\xi|\xi|^{1+a}})(\xi)\mathbbm{1}_{\{0<|\xi|\leq c\}}(\xi)\gtrsim |\xi|^{(1+a)\theta}\mathbbm{1}_{\{0<|\xi|\leq c\}}(\xi),  
\end{equation}
where $\mathbbm{1}_{\{0<|\xi|\leq c\}}$ denotes the indicator function on the set $\{\xi\in \mathbb{R}: 0<|\xi|\leq c\}$, and the implicit constant above is independent of $\xi$ in such a set and depends on $t-t_1>0$. Consequently, using that $|\widetilde{\phi}|=|\widehat{\phi}|$, together with \eqref{newconclu1} and \eqref{newconclu2}, we get $|\xi|^{(1+a)\theta}|\widehat{\phi}|\mathbbm{1}_{\{0<|\xi|\leq c\}}\in L^2(\mathbb{R})$. But since $\phi\in L^2(\mathbb{R})$, and $(1+a)\theta<0$, it follows that $|\xi|^{(1+a)\theta}\widehat{\phi}\in L^2(\mathbb{R})$, i.e., $\phi\in \dot{H}^{(1+a)\theta}(\mathbb{R})$.

Now, we can prove that it holds $D^{\theta}(\widehat{\phi})\in L^2(\mathbb{R})$, that is to say, $\phi\in L^2(|x|^{2\theta}\, dx)$. Since for almost every $\xi$ and $\eta$,
\begin{equation*}
\begin{aligned}
|\widehat{\phi}(\xi)-\widehat{\phi}(\eta)|^{2}\les |e^{i t_1\xi|\xi|^{1+a}}-e^{i t_1\eta|\eta|^{1+a}}|^2|\widehat{\phi}(\xi)|^2+|e^{i t_1\xi|\xi|^{1+a}}\widehat{\phi}(\xi)-e^{i t_1\eta|\eta|^{1+a}}\widehat{\phi}(\eta)|^2,     
\end{aligned}   
\end{equation*}
we get from the definition of $\mathcal{D}^{\theta}$ and Proposition \ref{pontualn} that
\begin{equation*}
\begin{aligned}
  \mathcal{D}^{\theta}(\widehat{\phi})(\xi)\les & \mathcal{D}^{\theta}(e^{it_1\xi|\xi|^{1+a}})(\xi)|\widehat{\phi}(\xi)|+\mathcal{D}^{\theta}(e^{it_1\xi|\xi|^{1+a}}\widehat{\phi})(\xi)\\
  \les & |\xi|^{(1+a)\theta}|\widehat{\phi}(\xi)|+\mathcal{D}^{\theta}(e^{it_1\xi|\xi|^{1+a}}\widehat{\phi})(\xi),
\end{aligned}
\end{equation*}
 for almost every $\xi$. The previous inequality, the fact that $\phi\in \dot{H}^{(1+a)\theta}(\mathbb{R})$, $U(t_1)\phi \in L^2(|x|^{2\theta}\, dx)$, and Theorem \ref{stein} imply that $D^{\theta}(\widehat{\phi})\in L^2(\mathbb{R})$ as desired.

Finally, as we have already shown that $\phi\in \dot{H}^{(1+a)\theta}(\mathbb{R})\cap  L^2(|x|^{2\theta}\, dx)$, using that $u$ is a solution of the integral equation associated to \eqref{gbo}, we can apply the ideas in Remark \ref{remarkona} to conclude that $u\in L^{\infty}([0,T];\dot{H}^{(1+a)\theta}(\mathbb{R}))$. This completes the proof of part (i) in Theorem \ref{Uniquecont1}.

Next we prove part (ii). Here we have $1<\theta<\frac{1+2a}{2(1+a)}$. We begin by noting that $\theta>1$ implies that $u\in L^{\infty}([0,T];L^2(|x|^2\, dx))$. Then an application of part (i) gives
\begin{equation}\label{conclusion1}
\begin{aligned}
    D^{1+a}\phi, \, \, x\phi \in L^2(\mathbb{R}).
\end{aligned}    
\end{equation}
Arguing as in the proof of part (i) above when $0<\theta<-\frac{3}{2(1+a)}$, and using Proposition \ref{extradecaycond} when $-\frac{3}{2(1+a)}\leq \theta<\frac{1+2a}{2(1+a)}$, it follows from \eqref{int} and \eqref{integralequartiondecay} that \eqref{conclusDecay} holds for $1<\theta<\frac{1+2a}{2(1+a)}$. However, such fact is equivalent to
\begin{equation}\label{decayextraunique1}
xU(t)\phi\in L^2(|x|^{2(\theta-1)}\, dx) \, \, \text{ for almost every } \, \, t\in [0,T].
\end{equation}
We emphasize that $0<\theta-1<1$ provided that $\theta<\frac{1+2a}{2(1+a)}$ and $a<-2$. We observe that the validity of \eqref{conclusion1} implies
\begin{equation*}
xU(t)\phi=U(t)((x\phi)-(2+a)tD^{1+a}\phi)\in L^2(\mathbb{R}),   
\end{equation*}
and thus \eqref{decayextraunique1} is equivalent to
\begin{equation}\label{equivalentstatement}
U(t)((x\phi)-(2+a)tD^{1+a}\phi)\in L^2(|x|^{2(\theta-1)}\, dx) \, \, \text{ for almost every } \, \, t\in [0,T].
\end{equation}
Now, let us assume that
\begin{equation}\label{firstassumpt}
D^{(1+a)(\theta-1)}(x\phi),\, \, |x|^{\theta}\phi  \in L^2(\mathbb{R}).
\end{equation}
It follows from Lemma \ref{Dthetau} that $U(t)(x\phi)\in L^2(|x|^{2(\theta-1)}\, dx)$. Hence, \eqref{equivalentstatement} implies that there exist two different times $t, t_1\in [0,T]$ such that
\begin{equation*}
  U(t)(D^{1+a}\phi),U(t_1)(D^{1+a}\phi)\in L^2(|x|^{2(\theta-1)}\, dx).  
\end{equation*}
Accordingly, we can use the same arguments in the proof of Theorem \ref{Uniquecont1} (i) with the function $D^{1+a}\phi$, and weight size $\theta-1$ to deduce
\begin{equation}\label{firstassumpt2}
   D^{(1+a)\theta}\phi,\, \, |x|^{(\theta-1)} D^{1+a}\phi\in L^2(\mathbb{R}).
\end{equation}
Conversely, if we assume that the conditions in \eqref{firstassumpt2} hold, we have from Lemma \ref{Dthetau} that $U(t)(D^{1+a}\phi)\in L^2(|x|^{2(\theta-1)}\, dx)$. Then \eqref{equivalentstatement} yields that there exist two different times $t, t_1\in [0,T]$ such that
\begin{equation*}
  U(t)(x\phi),U(t_1)(x\phi)\in L^2(|x|^{2(\theta-1)}\, dx).  
\end{equation*}
It follows from the arguments in the proof of part (i)  that \eqref{firstassumpt} holds.  
\end{proof}

\begin{remark}
We note that in the proof of Theorem \ref{Uniquecont1}, the linear part of the equation mainly establishes the conclusion in this theorem. This can be justified by the fact that we used and deduced the following result.
\begin{proposition}\label{proptwotimes}
Let $\phi \in H^{s}(\mathbb{R})$, $s\geq 0$ be such that for two different times $t_1,t_2\in \mathbb{R}$, one has
\begin{equation*}
    U(t_j)\phi\in L^2(|x|^{2\theta}\, dx),
\end{equation*}    
$j=1,2$.
\begin{itemize}
    \item[(i)] If $0<\theta\leq 1$, it follows that 
    \begin{equation*}
     D^{(1+a)\theta}\phi, \, |x|^{\theta}\phi\in L^2(\mathbb{R}).   
    \end{equation*}
\item[(ii)] If $1<\theta\leq 2$. Then we have
\begin{equation*}
U(t_j)((x\phi)-(2+a)t_jD^{1+a}\phi)\in L^2(|x|^{2(\theta-1)}\, dx),    
\end{equation*}
$j=1,2$. In particular, it follows that \eqref{firstassumpt} holds if and only if \eqref{firstassumpt2} holds.
\end{itemize}
\end{proposition}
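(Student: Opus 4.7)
The statement is the purely linear skeleton of Theorem \ref{Uniquecont1}: at no point in that proof did one use the solution $u$ beyond the integral equation, so once the Duhamel term is known to lie in $L^2(|x|^{2\theta}\,dx)$, the remaining information about $\phi$ comes exclusively from the fact that $U(t_j)\phi\in L^2(|x|^{2\theta}\,dx)$ at two distinct times. The plan is therefore to isolate those linear steps and record them as a stand-alone result.

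For part (i), set $\widetilde{\phi}(\xi)=e^{it_1\xi|\xi|^{1+a}}\widehat{\phi}(\xi)$. The hypothesis translates into $\mathcal D^{\theta}\widetilde{\phi}\in L^2$ together with $\mathcal D^{\theta}(e^{i(t_2-t_1)\xi|\xi|^{1+a}}\widetilde{\phi})\in L^2$. Using the pointwise difference inequality
\begin{equation*}
|\widetilde{\phi}(\xi)|\,\mathcal{D}^{\theta}\bigl(e^{i(t_2-t_1)\xi|\xi|^{1+a}}\bigr)(\xi)\les \mathcal{D}^{\theta}\bigl(e^{i(t_2-t_1)\xi|\xi|^{1+a}}\widetilde{\phi}\bigr)(\xi)+\mathcal{D}^{\theta}\widetilde{\phi}(\xi),
\end{equation*}
together with the local lower bound $\mathcal{D}^{\theta}(e^{i(t_2-t_1)\xi|\xi|^{1+a}})(\xi)\gtrsim |\xi|^{(1+a)\theta}$ on a small set $\{0<|\xi|\leq c\}$, extracted from the computation inside Proposition \ref{nonintegrabprop} with $p=2$, one obtains $|\xi|^{(1+a)\theta}\widehat{\phi}\in L^2$ near the origin; the global integrability then follows because $\phi\in L^2$ and $(1+a)\theta<0$. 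A second application of the difference inequality, now using the pointwise upper bound from Proposition \ref{pontualn}, yields $\mathcal D^{\theta}\widehat{\phi}\in L^2$, which is $|x|^{\theta}\phi\in L^2$.

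For part (ii), since $\theta>1$ and $\phi\in L^2(\mathbb{R})$, one has the embedding $L^2(|x|^{2\theta}\,dx)\cap L^2\hookrightarrow L^2(|x|^{2}\,dx)$, hence $U(t_j)\phi\in L^2(|x|^{2}\,dx)$ for $j=1,2$. Part (i) applied at weight $1$ gives $D^{1+a}\phi,\,x\phi\in L^2$, which in turn validates the symbolic identity
\begin{equation*}
xU(t)\phi=U(t)\bigl((x\phi)-(2+a)t\,D^{1+a}\phi\bigr).
\end{equation*}
Multiplying the weighted hypothesis by $x$ and invoking this identity produces exactly the displayed conclusion of the statement. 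For the biconditional, assume \eqref{firstassumpt}; then Lemma \ref{Dthetau} guarantees $U(t)(x\phi)\in L^2(|x|^{2(\theta-1)}\,dx)$ for every $t$, and subtracting this from the two-time statement forces $U(t_j)(D^{1+a}\phi)\in L^2(|x|^{2(\theta-1)}\,dx)$ at both $j=1,2$; applying part (i) to $D^{1+a}\phi$ with weight $\theta-1\in(0,1]$ delivers \eqref{firstassumpt2}. The converse direction is symmetric, exchanging the roles of $x\phi$ and $D^{1+a}\phi$ in the subtraction.

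The only delicate ingredient is the local lower bound for $\mathcal{D}^{\theta}(e^{it\xi|\xi|^{1+a}})$ exploited in part (i); this is a repackaging of the argument inside Proposition \ref{nonintegrabprop}, isolating the contribution of the region $\mathcal B_1(x)$ where the oscillatory phase stays uniformly bounded and $\sin$ can be replaced by its argument.
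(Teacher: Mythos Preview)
Your proposal is correct and follows essentially the same route as the paper: Proposition \ref{proptwotimes} is stated there as the linear skeleton extracted verbatim from the proof of Theorem \ref{Uniquecont1}, and your write-up reproduces those steps (the pointwise $\mathcal{D}^{\theta}$ difference inequality, the local lower bound $\mathcal{D}^{\theta}(e^{it\xi|\xi|^{1+a}})\gtrsim|\xi|^{(1+a)\theta}$ near the origin, the identity $xU(t)\phi=U(t)((x\phi)-(2+a)tD^{1+a}\phi)$, and the subtraction argument for the biconditional). The only cosmetic difference is that you source the lower bound from the computation inside Proposition \ref{nonintegrabprop}, whereas the paper phrases it as ``arguing as in the proof of Proposition \ref{pontualn}''; the underlying estimate is the same.
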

\end{remark}

We emphasize that Proposition \ref{proptwotimes} (ii) holds for the range $1<\theta\leq 2$. In contrast, Theorem \ref{Uniquecont1} (ii) is valid for $1<\theta<\frac{1+2a}{2(1+a)}$. This is a consequence of the estimates for the nonlinear term $u\partial_x u$ in the deduction of Theorem \ref{lwpw}.


\subsection{Proof of Proposition \ref{propostimes1}}
With Propositions \ref{extradecaycond} and \ref{proptwotimes} in hand we are able to establish  Proposition \ref{propostimes1}.

\begin{proof}[Proof of Proposition \ref{propostimes1}] Let $u\in C([0,T];Z_{s,1}^{a,1})$ be a solution of \eqref{gbo} with initial condition $u(0)=\phi$. Since there exist two different times $t_1,t_2\in [0,T]$ such that $ u(t_j)\in L^2(|x|^{2^{+}}\, dx)$, $j=1,2$, and Proposition \ref{extradecaycond} establishes extra decay for the integral factor $u(t)-U(t)\phi$, it follows from the integral formulation of \eqref{gbo} (see \eqref{int}) that
\begin{equation*}
    U(t_j)\phi \in L^2(|x|^{2^{+}}\,dx),
\end{equation*}
for each $j=1,2$. Consequently, we can apply Proposition \ref{proptwotimes} to get the desired result.
\end{proof}


\subsection{Proof of Proposition \ref{propostimes3}}

We first present the following consequence of the proof of Proposition \ref{extradecaycond}.

\begin{remark}\label{uniquecontremark}
Some estimates in the proof of Proposition \ref{extradecaycond} extend to weights of order $\frac{1+2a}{2(1+a)}$. More precisely, let $s\geq \max\{-\frac{9+6a}{2(5+2a)},2^{+}\}$, and $\theta=\Big(\frac{1+2a}{2(1+a)}\Big)^{-}$. Assume that $u\in C([0,T];H^s(\mathbb{R}))\cap L^{\infty}([0,T];L^2(|x|^{2\theta}\, dx))$. The deduction of \eqref{thingstoprove} in the proof of Proposition \ref{extradecaycond} implies
\begin{equation}\label{equniquecontremark}
   \int_0^{t} U(t-\tau)(x\partial_x u^2)(\tau) d\tau\in  L^{\infty}([0,T];L^2(|x|^{2\big(\frac{1+2a}{2(1+a)}-1\big)}\, dx)). 
\end{equation}    
Moreover, under the same conditions on $u$, and recalling the factors $\mathcal{A}_2$, $\mathcal{A}_3$ in \eqref{decompositionApart}, and $\mathcal{A}_{1,2}$ in \eqref{decompositionApart2}. The arguments in the proof of Proposition \ref{extradecaycond} yield
\begin{equation}\label{equniquecontremark2}
   D^{\frac{1+2a}{2(1+a)}-1}\mathcal{A}_{1,2},\, D^{\frac{1+2a}{2(1+a)}-1}\mathcal{A}_2,\,  D^{\frac{1+2a}{2(1+a)}-1}\mathcal{A}_3 \in  L^{\infty}([0,T];L^2(\mathbb{R})). 
\end{equation}  
\end{remark}

\begin{proof}[Proof of Proposition \ref{propostimes3}]
Without loss of generality, we will assume that $t_1=0$. Thus, denoting $u(0)=\phi$, we have that the assumptions in  Proposition \ref{propostimes3} are 
\begin{equation*}
u(t_2), \, \phi\in L^2(|x|^{2\big(\frac{1+2a}{2(1+a)}\big)}\, dx), 
\end{equation*}
and
 \begin{equation*}
D^{(1+a)\big(\frac{1+2a}{2(1+a)}\big)}\phi,\, \, D^{(1+a)\big(\frac{1+2a}{2(1+a)}-1\big)}(x \phi),\, \, |x|^{\frac{1+2a}{2(1+a)}-1}D^{1+a}\phi\in L^2(\mathbb{R}).        
\end{equation*}
The previous conditions and Lemma \ref{Dthetau} imply that $U(t)\phi\in L^{\infty}([0,T];L^2(|x|^{2\big(\frac{1+2a}{2(1+a)}\big)}\, dx))$. Consequently, we have that $u(t_2)\in L^2(|x|^{2\big(\frac{1+2a}{2(1+a)}\big)}\, dx)$ if and only if
\begin{equation*}
   \int_0^{t_2} U(t_2-\tau)(\partial_x u^2)(\tau)\, d\tau\in  L^2(|x|^{2\big(\frac{1+2a}{2(1+a)}\big)}\, dx), 
\end{equation*}
which in view of the identity $xU(t) f=U(t)(xf-(2+a)tD^{1+a}f)$ is equivalent to
\begin{equation*}
   \int_0^{t_2} U(t_2-\tau)(x\partial_x u^2-(2+a)(t_2-\tau)\partial_x D^{1+a}u^2)(\tau)\, d\tau\in  L^2(|x|^{2\big(\frac{1+2a}{2(1+a)}-1\big)}\, dx). 
\end{equation*}
Hence, from \eqref{equniquecontremark} in Remark \ref{uniquecontremark}, and the preceding conclusion it follow
\begin{equation}\label{finaleq1}
   \int_0^{t_2} U(t_2-\tau)((t_2-\tau)\partial_x D^{1+a}u^2)(\tau)\, d\tau\in  L^2(|x|^{2\big(\frac{1+2a}{2(1+a)}-1\big)}\, dx). 
\end{equation}
Now, going to the frequency domain and using the decomposition \eqref{decompositionApart} with the same terms $\mathcal{A}_1$, $\mathcal{A}_2$ and $\mathcal{A}_3$ (also recall the decomposition of $\mathcal{A}_1$ in \eqref{decompositionApart2}), we use \eqref{equniquecontremark2} to deduce that \eqref{finaleq1} is equivalent to 
\begin{equation}\label{finaleq2}
 D^{\big(\frac{1+2a}{2(1+a)}-1\big)}\big(e^{it_2\xi|\xi|^{1+a}}\varphi(\xi)\big)\|\phi\|^2 \in L^2(\mathbb{R}), 
\end{equation}
which follows if and only if $\mathcal{D}^{\big(\frac{1+2a}{2(1+a)}-1\big)}\big(e^{it_2\xi|\xi|^{1+a}}\varphi(\xi)\big)\|\phi\|^2\in L^2(\mathbb{R})$, where $\varphi\in C^{\infty}_0(\mathbb{R})$ with $0\leq \varphi \leq 1$, $\varphi(\xi)=1$, whenever $|\xi|\leq 1$. However, since $\frac{1+2a}{2(1+a)}-1=\frac{-1}{2(1+a)}$, the result of Proposition \ref{nonintegrabprop} shows
\begin{equation*}
 \mathcal{D}^{\big(\frac{1+2a}{2(1+a)}-1\big)}\big(e^{it_2\xi|\xi|^{1+a}}\varphi(\xi)\big) \notin L^2(\mathbb{R}),   
\end{equation*}
but then \eqref{finaleq2}  forces us to have $\|\phi\|=0$. The proof is thus completed.
\end{proof}


\section{Appendix}\label{standarlwpAppendix}

This part is devoted to deriving the local well-posedness result presented in Lemma \ref{existence}. We will use a parabolic regularization argument, as given in \cite{AbBonaFellSaut1989,APlow,Iorio1986,IorioNunes1998}. We emphasize that the presence of the operator $\partial_x D^{1+a}$ complicates certain estimates and motivates to detail the proof of Lemma \ref{existence}.

\begin{proof}[Proof of Lemma \ref{existence}] We will only deduce existence and uniqueness of solutions for \eqref{gbo} and \eqref{mugbo}. The continuous dependence follows from our arguments and the ideas in  references \cite{AbBonaFellSaut1989,APlow,Iorio1986,IorioNunes1998}. To simplify the presentation of our estimates, we will divide our considerations into five key steps.

\underline{\bf (i)} Existence of solutions for the regularized equation \eqref{mugbo}. Let $0<\mu<1$ be fixed. Using the integral formulation of \eqref{mugbo}, one can apply a contraction argument similar to the one developed in the proof of Theorem \ref{lwpw} to show that there exist a time $T_{\mu}>0$ and a unique solution $u_{\mu}$ of \eqref{mugbo} with initial condition $u_{\mu}(0)=\phi$ such that
    \begin{equation*}
        u_{\mu}\in C([0,T_{\mu}];H^s_{a,\theta}(\mathbb{R})).
    \end{equation*}
The above result depends strongly on the estimates in Remark \ref{remarkona} for the nonlinear term $u_{\mu}\partial_x u_{\mu}$. 

\underline{\bf (ii)} We will show that the solutions $u_{\mu}$, $0<\mu<1$ constructed in (i) can be extended to a common time $T>0$ independent of $\mu$ and depending on $\phi$, and there exists $\rho\in C([0,T];[0,\infty))$, independent of $\mu$, such that
\begin{equation}\label{uniboundrho}
\begin{aligned}
 \|u_{\mu}(t)\|_{H^s_{a,\theta}}^2\leq \rho(t).
\end{aligned}
\end{equation}
We first observe that using energy estimates with the equation \eqref{mugbo}, and the embedding $H^{\frac{3}{2}}(\mathbb{R}) \hookrightarrow W^{1,\infty}(\mathbb{R})$, one gets the differential inequality
\begin{equation}\label{differentialineqA1}
  \frac{d}{dt}\|u_{\mu}(t)\|^2_{H^s}\leq c_s \|u_{\mu}(t)\|_{H^s}^3,  
\end{equation}
where the constant $c_s>0$ is independent of $\mu$. It is important to mention that the energy estimate used to get the above expression, and the fact that the term involving the dispersion can be canceled, are justified by the condition $\theta\geq \frac{2+a}{1+a}>\frac{1}{2}+\frac{1}{2(1+a)}$. To see this, writing $\partial_x=\mathcal{H}D$, where $\mathcal{H}$ denotes the Hilbert transform, it follows  
\begin{equation}\label{parabolicArgument}
\int \partial_x D^{1+a}J^s u_{\mu }J^s u_{\mu}\, dx=\int \mathcal{H}D^{\frac{2+a}{2}}J^s u_{\mu}D^{\frac{2+a}{2}}J^s u_{\mu} \, dx.    
\end{equation}
Thus, since $\mathcal{H}$ defines a skew-symmetric bounded operator in $L^2(\mathbb{R})$, the above integral is zero if we prove $D^{\frac{2+a}{2}}J^s u_{\mu}\in L^2(\mathbb{R})$. We note that in \eqref{parabolicArgument}, we do not use directly that the operator $\partial_x D^{1+a}$ is skew-symmetric, as we first need to justify that such integral is defined. Now, Plancherel's identity, the fact that $\theta>\frac{1}{2}+\frac{1}{2(1+a)}$, and dividing into frequencies, we observe
\begin{equation*}
\begin{aligned}
\|D^{\frac{2+a}{2}}J^s u_{\mu}\|\leq & \||\xi|^{\frac{2+a}{2}-(1+a)\theta}\langle \xi \rangle^s|\xi|^{(1+a)\theta}\widehat{u_{\mu}}\|_{L^2(|\xi|\leq 1)}+ \||\xi|^{\frac{2+a}{2}}\langle \xi \rangle^s\widehat{u_{\mu}}\|_{L^2(|\xi|\geq 1)}\\
\les & \|D^{(1+a)\theta}u_{\mu}\|+\|u_{\mu}\|_{H^s}.
\end{aligned}    
\end{equation*}
Consequently, the above estimate implies that \eqref{parabolicArgument} is null. Next, let $\rho_1\in C([0,T^{\ast});[0,\infty))$ be the maximal extended solution of
\begin{equation*}
\left\{\begin{aligned}
&\partial_t \rho_1(t)=c_s \rho_1(t)^{\frac{3}{2}}, \\
&\rho_1(0)=\|\phi\|_{H^s}^2.
\end{aligned}
\right.
\end{equation*}
Then one has from \eqref{differentialineqA1} that 
\begin{equation*}
  \|u_{\mu}(t)\|_{H^s}^2\leq \rho_1(t). 
\end{equation*}
Using the integral formulation of \eqref{mugbo}, the fact that $\{U_{\mu}(t)\}$ defines a semigroup of contractions (see \eqref{musemi}), \eqref{eqparabolicarg1}, and \eqref{eqparabolicarg2}, we have
\begin{equation*}
\begin{aligned}
\|D^{(1+a)\theta}u_{\mu}(t)\|\leq & \|D^{(1+a)\theta}\phi\|+\frac{1}{2}\int_0^t\|D^{(1+a)\theta}\partial_x(u^2_{\mu})(\tau)\|\, d\tau  \\
\leq & \|D^{(1+a)\theta}\phi\|+c\int_0^t\|u_{\mu}(\tau)\|_{H^s}^2\, d\tau \\
\leq & \|D^{(1+a)\theta}\phi\|+c \int_0^t\rho_1(\tau)\, d\tau.
\end{aligned}    
\end{equation*}
Summarizing, we have deduced that
\begin{equation}\label{parabolicUniformbound}
\begin{aligned}
 \|u_{\mu}(t)\|_{H^s_{a,\theta}}^2=& \|u_{\mu}(t)\|^2_{H^s}+\|D^{(1+a)\theta}u_{\mu}(t)\|^2 \\
 \leq&\rho_1(t)+\Big(\|D^{(1+a)\theta}\phi\|+c\int_0^t\rho_1(\tau)\, d\tau\Big)^{2}=:\rho(t).  
\end{aligned}
\end{equation}
Consequently, since $\rho \in C([0,T^{\ast});[0,\infty))$, and $T^{\ast}$ do not depend on $\mu$, it follows that for any time $0<T<T^{\ast}=T^{\ast}(\phi)$, the inequality \eqref{parabolicUniformbound}, and the usual extension method show that $u_{\mu}$ can be extended to the class $C([0,T];H^s_{a,\theta}(\mathbb{R}))$.

\underline{\bf (iii)} By the previous step, let $T>0$ be such that $u_{\mu}\in C([0,T];H^s_{a,\theta}(\mathbb{R}))$ for all $0<\mu<1$. In this part, we will show that when $\mu\to 0^{+}$, $u_{\mu}$ converges to a function $u$ in $C([0,T];L^2(\mathbb{R}))$, and in the weak sense of $C_{w}([0,T];H^s_{a,\theta}(\mathbb{R}))$. Moreover, such limit $u$ satisfies \eqref{uniboundrho}. 

Using the equation in \eqref{mugbo}, and the cancellation of the dispersive term provided by the argument below \eqref{parabolicArgument}, one can use energy estimates to get that there exists $u\in C([0,T];L^2(\mathbb{R}))$ such that $u_{\mu}\to u$ as $\mu \to 0^{+}$ in the sense of  $C([0,T];L^2(\mathbb{R}))$. Moreover, it is not hard to see that this fact implies that $u_{\mu}\to u$ in the weak sense of $C_{w}([0,T];H^s(\mathbb{R}))$.

Now, we claim that $u_{\mu}\to u$ in $C_{w}([0,T];\dot{H}^{(1+a)\theta}(\mathbb{R}))$. But first, we will introduce some approximations for an arbitrary function $f\in \dot{H}^{(1+a)\theta}(\mathbb{R})$. We consider $\psi\in C^{\infty}(\mathbb{R})$ be such that $\psi(\xi)=1$, if $|\xi|\geq 2$, and $\psi(\xi)=0$, if $|\xi|\leq 1$. We define then $\psi_n(\xi)=\psi(n\xi)$ and  $f_n=(\psi_n\widehat{f})^{\vee}$, for each integer $n\geq 2$.  We observe that the support of the function $\psi_n$ implies that for almost every $\xi$,
 \begin{equation*}
||\xi|^{(1+a)\theta}\widehat{f}(\xi)-|\xi|^{(1+a)\theta}\psi_n(\xi)\widehat{f}(\xi)|^2=|1-\psi_n(\xi)|^2||\xi|^{(1+a)\theta}\widehat{f}(\xi)|^2\to 0   
 \end{equation*}
as $n\to \infty$, and
 \begin{equation*}
 \begin{aligned}
 ||\xi|^{(1+a)\theta}\widehat{f}(\xi)-|\xi|^{(1+a)\theta}\psi_n(\xi)\widehat{f}(\xi)|^2 \les  ||\xi|^{(1+a)\theta}\widehat{f}(\xi)|^2\in L^1(\mathbb{R}).
 \end{aligned}
 \end{equation*}
Consequently, the previous facts and Lebesgue's dominated convergence theorem yield $f_n\to f$ as $n\to \infty$ in $\dot{H}^{(1+a)\theta}(\mathbb{R})$. Additionally, 
\begin{equation*}
  \|D^{2(1+a)\theta}f_n\|\les n^{-(1+a)\theta}\|D^{(1+a)\theta}f\|.
\end{equation*}
Now, denoting by $\langle \cdot, \cdot \rangle_{\dot{H}^{(1+a)\theta}}$ the usual inner product of $\dot{H}^{(1+a)\theta}(\mathbb{R})$ (recall that $\dot{H}^{\beta}(\mathbb{R})$ is a Hilbert space if $\beta<\frac{1}{2}$), we have for $\mu,\nu \in (0,1)$
\begin{equation}\label{innerporductestimate}
\begin{aligned}
|\langle u_{\mu}(t)&-u_{\nu}(t), f\rangle_{\dot{H}^{(1+a)\theta}}|\\
\leq &|\langle u_{\mu}(t)-u_{\nu}(t), f-f_n\rangle_{\dot{H}^{(1+a)\theta}}|+|\langle u_{\mu}-u_{\nu}, f_n\rangle_{\dot{H}^{(1+a)\theta}}| \\
\leq &\|u_{\mu}(t)-u_{\nu}(t)\|_{\dot{H}^{(1+a)\theta}}\|f-f_n\|_{\dot{H}^{(1+a)\theta}}+|\langle u_{\mu}-u_{\nu}, D^{2(1+a)\theta}f_n\rangle_{L^2}|\\
\les &M\|f-f_n\|_{\dot{H}^{(1+a)\theta}}+n^{-(1+a)\theta}\Big(\sup_{t\in [0,T]}\| u_{\mu}(t)-u_{\nu}(t)\|\Big)\| D^{(1+a)\theta}f_n\|,
\end{aligned}    
\end{equation}
where using \eqref{parabolicUniformbound}, $M>0$ is such that $\sup\limits_{0<\mu<1}\sup\limits_{t\in [0,T]}\|u_{\mu}(t)\|_{H^s_{a,\theta}}\leq M$. Given that $u_{\mu}$ converges in $C([0,T];L^2(\mathbb{R}))$ and that $f_n\to f$ in $\dot{H}^{(1+a)\theta}(\mathbb{R})$,  inequality \eqref{innerporductestimate} establishes that $u_{\mu}$ converges in $C_{w}([0,T];\dot{H}^{(1+a)\theta}(\mathbb{R}))$, which leads to $u\in C_{w}([0,T];\dot{H}^{(1+a)\theta}(\mathbb{R}))$. Consequently, we have proved that when $\mu \to 0^{+}$, $u_{\mu}$ converges to $u$ in $C_{w}([0,T];H^s_{a,\theta}(\mathbb{R}))$. Using this fact, we can also deduce that $u$ satisfies \eqref{uniboundrho}.

\underline{\bf (iv)} Let $u\in C([0,T];L^2(\mathbb{R}))\cap C_{w}([0,T];H^s_{a,\theta}(\mathbb{R}))$ be the weak limit of the family $u_{\mu}$ of solutions of \eqref{mugbo} deduced in (iii), which also satisfies \eqref{uniboundrho}. We will show that $u\in C^1_w([0,T];H^{s-2}(\mathbb{R}))\cap AC[0,T];H^{s-2}(\mathbb{R}))$, $u(0)=\phi$, and $u$ satisfies \eqref{gbo} in a week sense, that is,
\begin{equation*}
\partial_t \langle u(t),f\rangle_{H^{s-2}}=\langle \partial_xD^{a+1}u(t)-u(t)\partial_x u(t),f\rangle_{H^{s-2}},  
\end{equation*}
for all $f\in H^{s-2}(\mathbb{R})$, and almost every $t\in [0,T]$, where $AC([0,T];H^{s-2}(\mathbb{R}))$ is the space of absolutely continuous functions with values in $H^{s-2}(\mathbb{R})$. Moreover, for $\phi\in H^{s}_{a,\theta}(\mathbb{R})$, the weak solution of \eqref{gbo} constructed above is unique in the class of bounded functions $u:[0,T]\rightarrow H^s_{a,\theta}(\mathbb{R})$, which are also in $C([0,T];L^2(\mathbb{R}))\cap C_w([0,T];H^s_{a,\theta}(\mathbb{R}))\cap C^1_w([0,T];H^{s-2}(\mathbb{R}))\cap AC[0,T];H^{s-2}(\mathbb{R}))$. 

To deduce (iv), we first observe that $\frac{2+a}{(1+a)}\leq \theta<-\frac{3}{2(1+a)}$, the fact that $u_\mu$ converges to $u$ in the sense of $ C_{w}([0,T];H_{a,\theta}^{s}(\mathbb{R}))$, and that $u$ satisfies \eqref{uniboundrho} imply
\begin{equation}\label{eqconvergenc}
\begin{aligned}
\partial_x D^{1+a}u_{\mu}  &\rightharpoonup  \partial_x D^{1+a}u, \, \,  \text{in }\, \,  H^{s-2}(\mathbb{R}), \\
u_{\mu}\partial_x u_{\mu}  &\rightharpoonup  u\partial_x u, \, \, \text{in }\, \,   H^{s-2}(\mathbb{R}),\\
\partial_x^2u_{\mu}  &\rightharpoonup  \partial_x^2u, \,  \, \text{in }\, \,   H^{s-2}(\mathbb{R}),
\end{aligned}    
\end{equation}
as $\mu\to 0^{+}$. The last two convergences above follow as a consequence of the results in (iii). However, we emphasize that the first convergence in \eqref{eqconvergenc} requires the condition $\frac{2+a}{(1+a)}\leq \theta<-\frac{3}{2(1+a)}$. To see this, let $f\in H^{s-2}(\mathbb{R})$ and $\varphi\in C^{\infty}_0(\mathbb{R})$ be such that $\varphi\equiv 1$, whenever $|\xi|\leq 1$. We set $P_{\varphi}f=(\varphi \widehat{f})^{\vee}$, then the definition of the inner product in $H^{s-2}(\mathbb{R})$ and the fact that $f=P_{\varphi}f+(I-P_{\varphi})f$ yield
\begin{equation*}
\begin{aligned}
\langle \partial_x D^{1+a}u_{\mu}-\partial_x D^{1+a}u, f \rangle_{H^{s-2}}=&\langle \partial_x D^{1+a-(1+a)\theta}J^{s-2} D^{(1+a)\theta}(u_{\mu}-u), J^{s-2}P_{\varphi}f \rangle_{L^2}\\
&+\langle \partial_x D^{1+a}J^{s-2}(u_{\mu}-u),(I-P_{\varphi})J^{s-2}f \rangle_{L^2}\\
=&-\langle  D^{(1+a)\theta}(u_{\mu}-u), \partial_x D^{1+a-(1+a)\theta}J^{2s-4}P_{\varphi}f \rangle_{L^2}\\
&-\langle J^{s-2}(u_{\mu}-u),(I-P_{\varphi})\partial_x D^{1+a}J^{s-2}f \rangle_{L^2}\\
=:& \mathcal{I}_{1,\mu}+\mathcal{I}_{2,\mu}.
\end{aligned}
\end{equation*}
Now, using the definition of the operator $P_{\varphi}$, and the fact that $2+a-(1+a)\theta\geq 0$, we get $\partial_x D^{1+a-(1+a)\theta}J^{2s-4}P_{\varphi}f \in L^2(\mathbb{R})$. We can prove using the week convergence of $u_{\mu}$ to $u$ deduced in (iii), and \eqref{uniboundrho}, that $D^{(1+a)\theta}u_{\mu}\rightharpoonup D^{(1+a)\theta}u$ in the $L^2(\mathbb{R})$ sense. Collecting these previous facts, we have that $\mathcal{I}_{1,\mu}\to 0$ as $\mu \to 0^{+}$. On the other hand, by support properties of the operator $I-P_{\varphi}$, and using that $a<-2$, we have $(I-P_{\varphi})\partial_x D^{1+a}J^{s-2}f \in L^2(\mathbb{R})$. Thus, the week convergence of $u_{\mu}$ in $H^s(\mathbb{R})$ implies that $\mathcal{I}_{2,\mu}\to 0$ as $\mu\to 0^+$. This shows the convergences in \eqref{eqconvergenc}. 

Now, given that $u_{\mu}$ solves the integral equation associated to \eqref{mugbo}, it follows that for any $f\in H^{s-2}(\mathbb{R})$,
\begin{equation*}
\begin{aligned}
\langle u_{\mu}(t), f \rangle_{H^{s-2}}=\langle \phi, f \rangle_{H^{s-2}}+\int_0^t \langle \mu \partial_x^2 u_{\mu}(\tau)+ \partial_x D^{a}u_{\mu}(\tau)-u_{\mu}(\tau)\partial_x u_{\mu}(\tau), f \rangle_{H^{s-2}} \, d\tau.   
\end{aligned}    
\end{equation*}
Hence, \eqref{eqconvergenc} shows that when $\mu \to 0^{+}$, 
\begin{equation*}
\begin{aligned}
\langle u(t), f \rangle_{H^{s-2}}=\langle \phi, f \rangle_{H^{s-2}}+\int_0^t \langle  \partial_x D^{a}u(\tau)-u(\tau)\partial_x u(\tau), f \rangle_{H^{s-2}} \, d\tau.   
\end{aligned}    
\end{equation*}
The above identity and similar ideas to those in \cite{AbBonaFellSaut1989,APlow,Iorio1986,IorioNunes1998}, show that $u$ satisfies the equation in \eqref{gbo} in the weak sense of $H^{s-2}(\mathbb{R})$ and $u\in AC[0,T];H^{s-2}(\mathbb{R}))$. Finally, using energy estimates at $L^2$-level, one gets the uniqueness statement.

\underline{\bf (v)} Let $u$ be the solution obtained in (iv), we will show $u\in C([0,T];H^s_{a,\theta}(\mathbb{R}))$. Note that once we have proved that $u\in C([0,T];H^s_{a,\theta}(\mathbb{R}))$, the conclusion in (iv) completes the proof of existence of solutions for the equation \eqref{gbo}. 
Since $u\in C_w([0,T];H^s_{a,\theta}(\mathbb{R}))$ is a solution of \eqref{gbo}, and $u$ satisfies \eqref{uniboundrho}, given $f\in H^{s}_{a,\theta}(\mathbb{R})$ with $\|f\|_{H^{s}_{a,\theta}}=1$, we get
\begin{equation*}
\begin{aligned}
|\langle \phi,f \rangle_{H^{s}_{a,\theta}}|=\lim_{t\to 0^{+}}|\langle u(t),f \rangle_{H^{s}_{a,\theta}}|\leq & \liminf_{t\to 0^{+}}\|u(t)\|\\
\leq & \limsup_{t\to 0^{+}}\|u(t)\|\leq \limsup_{t\to 0^{+}} \rho(t)^{\frac{1}{2}}\leq \|\phi\|_{H^{s}_{a,\theta}},  
\end{aligned}    
\end{equation*}
where we have used the fact that $\lim_{t\to 0^{+}}\rho(t)=\|\phi\|_{H^{s}_{a,\theta}}^2$ (see \eqref{parabolicUniformbound}). Consequently, since $f$ is arbitrary, we deduce $\lim_{t\to 0^{+}}u(t)=\phi$ in $H^s_{a,\theta}(\mathbb{R})$, i.e., $u$ is continuous to the right of $t=0$. Given $t'\in [0,T]$, let $v$ be the weak solution of \eqref{gbo} with initial condition $u(t')$ (such  a solution exists by steps (i)-(iv)). It follows from the uniqueness results in (iv) that $v(t)=u(t+t')$, and by the previous argument, we have $v(t)$ is continuous to the right at the origin, it follows that $u$ is continuous to the right at $t=t'$, i.e., since $t'$ is arbitrary, $u$ is continuous to the right in $[0,T]$. Finally, since the equation in \eqref{gbo} is invariant under the transformation $(x,t)\rightarrow (-x,t'-t)$, we deduce continuity to the left at $t'$ from uniqueness, and continuity to the right of weak solutions of \eqref{gbo}. Summarizing $u\in C([0,T];H^s_{a,\theta}(\mathbb{R}))$, which completes the proof.
\end{proof}


\section*{Acknowledgment}

A.P. is partially supported by Conselho Nacional de Desenvolvimento Científico e Tecnológico - CNPq grant 309450/2023-3.
\\ \\
{\bf Data Availability.} 
Data sharing not applicable to this article as no datasets were generated or analyzed during the current study.


\end{document}